\begin{document}
\newcommand {\emptycomment}[1]{} 

\newcommand{\tabincell}[2]{\begin{tabular}{@{}#1@{}}#2\end{tabular}}

\newcommand{\nc}{\newcommand}
\newcommand{\delete}[1]{}
\nc{\mfootnote}[1]{\footnote{#1}} 
\nc{\todo}[1]{\tred{To do:} #1}

\delete{
\nc{\mlabel}[1]{\label{#1}}  
\nc{\mcite}[1]{\cite{#1}}  
\nc{\mref}[1]{\ref{#1}}  
\nc{\meqref}[1]{\eqref{#1}} 
\nc{\bibitem}[1]{\bibitem{#1}} 
}

\nc{\mlabel}[1]{\label{#1}  
{\hfill \hspace{1cm}{\bf{{\ }\hfill(#1)}}}}
\nc{\mcite}[1]{\cite{#1}{{\bf{{\ }(#1)}}}}  
\nc{\mref}[1]{\ref{#1}{{\bf{{\ }(#1)}}}}  
\nc{\meqref}[1]{\eqref{#1}{{\bf{{\ }(#1)}}}} 
\nc{\mbibitem}[1]{\bibitem[\bf #1]{#1}} 

\newtheorem{thm}{Theorem}[section]
\newtheorem{lem}[thm]{Lemma}
\newtheorem{cor}[thm]{Corollary}
\newtheorem{pro}[thm]{Proposition}
\newtheorem{conj}[thm]{Conjecture}
\theoremstyle{definition}
\newtheorem{defi}[thm]{Definition}
\newtheorem{ex}[thm]{Example}
\newtheorem{rmk}[thm]{Remark}
\newtheorem{pdef}[thm]{Proposition-Definition}
\newtheorem{condition}[thm]{Condition}



\title[The Levi-Civita products of Leibniz algebras]
{The Levi-Civita products of Leibniz algebras with nondegenerate skew-symmetric $2$-cocycles}

    \author{Quan Zhao}
    \address{Chern Institute of Mathematics \& LPMC, Nankai University, Tianjin 300071, China}
    \email{zhaoquan@mail.nankai.edu.cn}

    \author{Guilai Liu}
    \address{Chern Institute of Mathematics \& LPMC, Nankai University, Tianjin 300071, China}
    \email{liugl@mail.nankai.edu.cn}

\date{\today}%

\begin{abstract}
	This paper studies the associated Levi-Civita products of a Leibniz algebra with a nondegenerate skew-symmetric $2$-cocycle.
	Such products form into the notion of an anti-pre-Leibniz algebra, which is characterized as a Leibniz-admissible algebra which renders a representation of the sub-adjacent Leibniz algebra through the negative multiplication operators.
	Such a characterization serves as the converse side of the role that pre-Liebniz algebras play in the splitting theory of Leibniz algebras, which justifies the name of anti-pre-Leibniz algebras.
	There is a compatible anti-pre-Leibniz algebra structure on a Leibniz algebra if and only if there is an invertible anti-$\mathcal{O}$-operator of the Leibniz algebra.
Another  important role that anti-pre-Leibniz algebras play is that they give a new characterization of Novikov dialgebras, that is, a Novikov dialgebra is interpreted as
a transformed pre-Leibniz algebra which gives rise to an anti-pre-Leibniz algebra structure through specific combinations of multiplications.
The properties of Novikov dialgebras are also further investigated.
\end{abstract}

\subjclass[2020]{
    17A36,  
    17A40,  
    17B10, 
    18M70.  
}

\keywords{Leibniz algebras, Levi-Civita products, skew-symmetric $2$-cocycles, anti-pre-Leibniz algebras, anti-$\mathcal{O}$-operators, Novikov dialgebras }
\maketitle

\vspace{-1.3cm}

\tableofcontents

\vspace{-.5cm}

\allowdisplaybreaks

\section{Introduction}

A Leibniz algebra \cite{Blo1,Lod2} is a vector space $A$ together
with a multiplication $\circ_{A}:A\otimes A\rightarrow A$ such that the following equation holds:
\begin{equation}\label{eq:Leibniz}
x\circ_{A}(y\circ_{A}z)=(x\circ_{A}y)\circ_{A}z
+y\circ_{A}(x\circ_{A}z),\;\forall x,y,z\in A.
\end{equation}
As noncommutative analogues of Lie algebras, Leibniz algebras form an important class of non-associative algebras due to their broad connections and various applications in mathematics and physics.
Recently,
Leibniz algebras are studied in different aspects such as integration \cite{Bor,Cov,JP}, deformation quantization \cite{Dhe}, (co)homology and homotopy theories \cite{ammardefiLeibnizalgebra,Gao,Lodder,Lod2,Pir},  higher-order differential operators \cite{Akman} and higher gauge theories \cite{Kot, Str}. From the operadic viewpoint, the operad of Leibniz algebras is the duplicator of the operad of  Lie algebras \cite{Pei}.

\delete{
\begin{defi}\cite{Chap2005}
A \textbf{(skew-symmetric) quadratic Leibniz algebra} $(A,\circ_{A},\omega)$ is a Leibniz algebra $(A,\circ_{A})$ equipped with a nondegenerate skew-symmetric bilinear form $\omega$ satisfying the following \textbf{invariant} condition:
\begin{equation}
\omega(x,y\circ_{A}z)=\omega(x\circ_{A}z+z\circ_{A}x,y),\;\forall x,y,z\in A.
\end{equation}
\end{defi}}

In \cite{Chap2005}, Chapoton used the operad theory to properly determine the notion of an invariant bilinear form on a Leibniz algebra. Later the notion of a {\bf  (skew-symmetric) quadratic Leibniz algebra} was introduced in \cite{ST}, defined to be a Leibniz algebra $(A,\circ_{A})$ equipped with a nondegenerate skew-symmetric bilinear form $\omega$ which is invariant in the following sense:
\begin{equation}\label{eq:226}
\omega(x,y\circ_{A}z)=\omega(x\circ_{A}z+z\circ_{A}x,y),\;\forall x,y,z\in A.
\end{equation}
With such kind of bilinear forms applied to Manin triples of Leibniz algebras, the corresponding bialgebra theory for Leibniz algebras was established in \cite{ST, BLST}.

On the other hand, the Levi-Civita products of Leibniz algebras with nondegenerate skew-symmetric bilinear forms are determined as follows.
\begin{defi}\cite{TXS}\label{defi:232}
	Suppose that $(A,\circ_{A},\omega)$ is a \textbf{pseudo-Riemannian Leibniz algebra}, that is, $(A,\circ_{A})$ is a Leibniz algebra and $\omega$ is a nondegenerate skew-symmetric bilinear form on $A$.  The associated \textbf{Levi-Civita products} $\lozenge_{A},\blacklozenge_{A}:A\otimes A\rightarrow A$ are defined by the following equations:
\begin{eqnarray}
&&2\omega(x\lozenge_{A}y,z)=\omega(x\circ_{A}y,z)
+\omega(y\circ_{A}z,x)+\omega(z\circ_{A}y,x)+\omega(x\circ_{A}z,y),\label{eq:LV1}\\
&&2\omega(x\blacklozenge_{A}y,z)=\omega(x\circ_{A}y,z)
-\omega(y\circ_{A}z,x)-\omega(z\circ_{A}y,x)-\omega(x\circ_{A}z,y),\;\forall x,y,z\in A.\label{eq:LV2}
\end{eqnarray}
\end{defi}

Straightforwardly, we have $ \lozenge_{A}+\blacklozenge_{A}=\circ_{A} $  in Definition \ref{defi:232}, that is, $(A, \lozenge_{A} , \blacklozenge_{A} )$   is a Leibniz-admissible algebra. In particular,   if \eqref{eq:226} holds such that $(A,\circ_{A},\omega)$ is a quadratic Leibniz algebra, then the associated Levi-Civita products are $\lozenge_{A}=\blacklozenge_{A}=\frac{1}{2}\circ_{A}$. It is quite natural to consider the Levi-Civita products on Leibniz algebras with other types of nondegenerate skew-symmetric bilinear forms.

On the other hand, a bilinear form $\omega$ on a Leibniz algebra is called a {\bf $2$-cocycle} if the following equation holds:
\begin{equation}\label{eq:2-cocyle}
	\omega(z,x\circ_{A}y)=\omega(x,y\circ_{A}z
	+z\circ_{A}y)-\omega(y,x\circ_{A}z),\;\forall x,y,z\in A.
\end{equation}
When $\omega$ is moreover nondegenerate and symmetric, then $(A,\circ_{A},\omega)$ is called a {\bf symplectic Leibniz algebra}  \cite{TXS}
 which leads to a compatible pre-Leibniz algebra (originally named as a Leibniz dendriform algebra in \cite{ST}) on the underlying vector space.

In this paper, we study nondegenerate skew-symmetric $2$-cocycles on Leibniz algebras, which serve as the skew-symmetric version of the bilinear forms on symplectic Leibniz algebras.
Such bilinear form arises in the study of quadratic perm algebras
\cite{LZB} with certain linear operators.
As an analogue of the well-known fact that a commutative associative algebra with a derivation (or an averaging operator) gives rise to a Lie algebra, we show that a perm algebra \cite{Chap2001} with a derivation (or an averaging operator) gives rise to a Leibniz algebra.
In particular, if the perm algebra is quadratic, that is, the perm algebra admits a nondegenerate skew-symmetric invariant bilinear form (see \cite{Chap2005}), then such bilinear form is a $2$-cocycle on the corresponding Leibniz algebra.

Unlike the Levi-Civita products of quadratic Leibniz algebras which are still determined by the underlying Leibniz algebras,
the Levi-Civita products of Leibniz algebras with nondegenerate skew-symmetric $2$-cocycles turn to be a new kind of algebra structure, namely the anti-pre-Leibniz algebras.
An anti-pre-Leibniz algebra is characterized as a Leibniz-admissible algebra $(A,\succ_{A},\prec_{A})$ such that the negative left and right multiplication operators of the two operations give a representation $(-\mathcal{L}_{\succ_{A}},-\mathcal{R}_{\prec_{A}},A)$ of the sub-adjacent Leibniz algebra
 $(A,\circ_{A}=\succ_{A}+\prec_{A})$.
Such a characterization serves as the converse side of pre-Leibniz algebras  as the classical splitting theory of Leibniz algebras, since a pre-Leibniz algebra is a Leibniz-admissible algebra $(A,\rhd_{A},\lhd_{A})$ such that  $( \mathcal{L}_{\rhd_{A}}, \mathcal{R}_{\lhd_{A}},A)$ is a representation of the sub-adjacent Leibniz algebra
$(A,\circ_{A}=\rhd_{A}+\lhd_{A})$. Thus the name of anti-pre-Leibniz algebras is justified.
The property of this new algebra structure is further studied. In particular, we
introduced the notion of an anti-$\mathcal{O}$-operator on a Leibniz algebra, and show that
there is a  compatible anti-pre-Leibniz algebra structure on a Leibniz algebra if and only if there is an invertible anti-$\mathcal{O}$-operator of the Leibniz algebra.

The notion of Novikov dialgebras was introduced in \cite{Kol}, and it is shown therein that the operad of Novikov dialgebras is the duplicator of the operad of Novikov algebras \cite{Bal, Gel}
and every Novikov dialgebra can be embedded into a perm algebra with a derivation.
Moreover, it is shown in \cite{ZhHo} that Novikov dialgebras are in one-to-one correspondence with a class of Leibniz conformal algebras, and the bialgebra theory for Novikov dialgebras was studied in \cite{XuBHo}.

This paper gives a new characterization of Novikov dialgebras in terms of anti-pre-Leibniz algebras.
That is, a Novikov dialgebra is a transformed pre-Leibniz algebra which gives rise to an anti-pre-Leibniz algebra structure through specific combinations of multiplications.
More explicitly, there is a one-to-one correspondence between Novikov dialgebras and a subclass of anti-pre-Leibniz algebras, namely admissible Novikov dialgebras.
The properties of (admissible) Novikov dialgebras are further studied.
Note that an anti-pre-Leibniz algebra  $(A,\succ_{A},\prec_{A})$ renders two types of Leibniz algebra structures on the double space $A\oplus A^{*}$. We show that they are compatible Leibniz algebras if and only if $(A,\succ_{A},\prec_{A})$ is moreover an admissible Novikov dialgebra. There is a parallel result on Novikov dialgebras.
On the other hand, we introduce the notion of Gel'fand-Dorfman dialgebras, which include Novikov dialgebras as a subclass.
Then we establish the one-to-one correspondence between a class of infinite-dimensional Leibniz algebras and Gel'fand-Dorfman dialgebras through the process of affinization.

This paper is organized as follows.
In Section \ref{sec:2}, we introduce the
notion of anti-pre-Leibniz algebras, and show that there is a compatible anti-pre-Leibniz algebra of a Leibniz algebra if and only if there exists an invertible anti-$\mathcal{O}$-operator of the Leibniz algebra.
Anti-pre-Leibniz algebras serve as the Levi-Civita products of Leibniz algebras with nondegenerate skew-symmetric $2$-cocycles, and conversely, there is a natural construction of nondegenerate skew-symmetric $2$-cocycles on the semi-direct product Leibniz algebras induced from anti-pre-Leibniz algebras.
In Section \ref{sec:3}, we show that there is a one-to-one correspondence between Novikov dialgebras and a subclass of anti-pre-Leibniz algebras, namely admissible Novikov dialgebras.
The relations among (admissible) Novikov dialgebras, Gel'fand-Dorfman dialgebras and (compatible) Leibniz algebras are also further studied.

Throughout this paper, unless otherwise specified, all the vector
spaces and algebras are finite-dimensional over an algebraically
closed field $\mathbb {K}$ of characteristic zero, although many
results and notions remain valid in the infinite-dimensional case. For a vector space $A$ with a multiplication  $\circ_A:A\otimes A\rightarrow A$, the linear maps ${\mathcal L}_{\circ_A}, {\mathcal R}_{\circ_A}:A\rightarrow {\rm End} (A)$ are
respectively defined by
\begin{eqnarray*}
    {\mathcal L}_{\circ_A}(x)y:=x\circ_A y,\;\; {\mathcal
        R}_{\circ_A}(x)y:=y\circ_A x, \;\;\;\forall x, y\in A.
\end{eqnarray*}

\section{Anti-pre-Leibniz algebras and nondegenerate skew-symmetric 2-cocycles on Leibniz algebras}\label{sec:2}\
In this section,
we introduce the notion of an anti-pre-Leibniz algebra. We show that there is a compatible anti-pre-Leibniz algebra of a Leibniz algebra if and only if there exists an invertible anti-$\mathcal{O}$-operator of the Leibniz algebra.
Moreover,
anti-pre-Leibniz algebras serve as the Levi-Civita products of Leibniz algebras with nondegenerate skew-symmetric $2$-cocycles.

Recall the representation theory of Leibniz algebras \cite{FMi}.
\begin{defi}
	A \textbf{representation} of a Leibniz algebra $(A,\circ_{A})$ is a triple $(l_{\circ_{A}},r_{\circ_{A}},V)$, such that $V$ is a vector space and $l_{\circ_{A}},r_{\circ_{A}}:A\rightarrow\mathrm{End}(V)$ are linear maps satisfying
	\begin{eqnarray}
		&&l_{\circ_{A}}(x\circ_{A}y)v=l_{\circ_{A}}(x)l_{\circ_{A}}(y)v
		-l_{\circ_{A}}(y)l_{\circ_{A}}(x)v,\label{eq:rep1}\\
		&&r_{\circ_{A}}(x\circ_{A}y)v=l_{\circ_{A}}(x)r_{\circ_{A}}(y)v
		-r_{\circ_{A}}(y)l_{\circ_{A}}(x)v,\label{eq:rep2}\\
		&&r_{\circ_{A}}(y)l_{\circ_{A}}(x)v
		=-r_{\circ_{A}}(y)r_{\circ_{A}}(x)v,\;\forall x,y\in A, v\in V.\label{eq:rep3}
	\end{eqnarray}
Two representations $(l_{\circ_{A}},r_{\circ_{A}},V)$ and $(l'_{\circ_{A}},r'_{\circ_{A}},V')$ of $(A,\circ_{A})$ are called \textbf{equivalent} if there exists a linear isomorphism $\phi:V\rightarrow V'$ such that the following equations hold:
\begin{equation}\label{eq:eq Leibniz rep}
	\phi l_{\circ_{A}}(x)=l'_{\circ_{A}}(x)\phi,\; \phi r_{\circ_{A}}(x)=r'_{\circ_{A}}(x)\phi,\;\forall x\in A.
\end{equation}
\end{defi}

\begin{pro}\label{pro:327}
Let $(A,\circ_{A})$ be a Leibniz algebra, $V$ be a vector space and $l_{\circ_{A}},r_{\circ_{A}}:A\rightarrow \mathrm{End}(V)$ be linear maps. Then the triple $(l_{\circ_{A}},r_{\circ_{A}},V)$ is a representation of the  $(A,\circ_{A})$ if and only if there is a  Leibniz algebra structure on the
direct sum $A\oplus V$ of vector spaces given by
\begin{equation}\label{eq:sd Leibniz} (x+u)\circ_{d}(y+v)=x\circ_{A}y+l_{\circ_{A}}(x)v+r_{\circ_{A}}(y)u,
\;\forall x,y\in A, u,v\in V.
\end{equation}
We denote the Leibniz algebra  $(A\oplus V,\circ_{d})$ by $A\ltimes_{l_{\circ_{A}},r_{\circ_{A}}}V$.
\end{pro}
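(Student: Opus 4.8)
The plan is to verify the semidirect-product construction directly, in both directions, by unwinding the Leibniz identity \eqref{eq:Leibniz} for the proposed product $\circ_{d}$ on $A\oplus V$ and comparing it with the defining axioms \eqref{eq:rep1}--\eqref{eq:rep3} of a representation. First I would observe that, since $\circ_{d}$ restricted to $A$ is $\circ_{A}$ and since $V\circ_{d}V=0$, the Leibniz identity for $\circ_{d}$ on triples $(a_1+u_1,a_2+u_2,a_3+u_3)$ with $a_i\in A$, $u_i\in V$, splits by multilinearity into eight cases according to which slots carry an $A$-component and which carry a $V$-component. The case where all three arguments lie in $A$ is exactly the Leibniz identity for $(A,\circ_{A})$, which holds by hypothesis; the cases with two or three arguments in $V$ are trivially $0=0$ because any product involving two elements of $V$ vanishes and $\circ_A$ does not leave $A$.

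Next I would treat the three genuinely informative cases, namely exactly one of the three arguments lying in $V$. Taking $x=a_1,\,y=a_2,\,z=u\in V$ and expanding both sides of \eqref{eq:Leibniz} via \eqref{eq:sd Leibniz}, the $A$-components cancel by the Leibniz identity in $A$ and the $V$-component reduces precisely to \eqref{eq:rep1}, i.e. $l_{\circ_{A}}(a_1\circ_A a_2)u = l_{\circ_{A}}(a_1)l_{\circ_{A}}(a_2)u - l_{\circ_{A}}(a_2)l_{\circ_{A}}(a_1)u$. Similarly, putting the $V$-element in the middle slot, $x=a_1,\,y=u\in V,\,z=a_3$, yields \eqref{eq:rep2} after cancellation of the $A$-parts, and putting it in the first slot, $x=u\in V,\,y=a_2,\,z=a_3$, yields \eqref{eq:rep3}. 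Thus the Leibniz identity for $\circ_{d}$ is equivalent to the conjunction of the Leibniz identity for $\circ_A$ (already assumed) and the three representation axioms; this proves both implications simultaneously.

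Concretely, for the forward direction I assume $(l_{\circ_{A}},r_{\circ_{A}},V)$ is a representation and read the above case analysis as showing each instance of \eqref{eq:Leibniz} for $\circ_{d}$ holds; for the converse I assume $\circ_{d}$ is a Leibniz algebra structure and specialize the identity to the three configurations above (choosing $u$, $a_i$ freely) to recover \eqref{eq:rep1}--\eqref{eq:rep3} one at a time. I expect no real obstacle here: the only mild bookkeeping is being careful with the positions of $l_{\circ_{A}}$ versus $r_{\circ_{A}}$ (the right-multiplication terms $r_{\circ_{A}}(y)u$ in \eqref{eq:sd Leibniz} come from the second argument acting on the first argument's $V$-component) and with signs, since the Leibniz identity is not symmetric in its first two arguments. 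A convenient way to organize the writeup is to record, once and for all, the expansion
\begin{align*}
(a_1+u_1)\circ_{d}\big((a_2+u_2)\circ_{d}(a_3+u_3)\big)
&= a_1\circ_A(a_2\circ_A a_3) + l_{\circ_{A}}(a_1)l_{\circ_{A}}(a_2)u_3\\
&\quad + l_{\circ_{A}}(a_1)r_{\circ_{A}}(a_3)u_2 + r_{\circ_{A}}(a_2\circ_A a_3)u_1,
\end{align*}
together with the analogous expansions of $\big((a_1+u_1)\circ_{d}(a_2+u_2)\big)\circ_{d}(a_3+u_3)$ and $(a_2+u_2)\circ_{d}\big((a_1+u_1)\circ_{d}(a_3+u_3)\big)$, and then simply match coefficients of $u_1$, $u_2$, $u_3$ and of the pure-$A$ term. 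Matching the $u_3$-coefficients gives \eqref{eq:rep1}, the $u_2$-coefficients give \eqref{eq:rep2}, the $u_1$-coefficients give \eqref{eq:rep3}, and the pure-$A$ term gives the Leibniz identity in $A$; this makes the equivalence transparent.
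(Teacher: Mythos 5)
Your verification is correct, and it is exactly the routine check the paper leaves implicit: Proposition \ref{pro:327} is stated without proof, and the intended argument is precisely the trilinear case-splitting and coefficient matching you describe (the defect $(x,y,z)\mapsto x\circ_{d}(y\circ_{d}z)-(x\circ_{d}y)\circ_{d}z-y\circ_{d}(x\circ_{d}z)$ is trilinear, so it suffices to test arguments taken from $A$ or $V$, and all mixed cases with two or more $V$-entries vanish identically). One small imprecision: the configuration with the $V$-element in the first slot literally yields $r_{\circ_{A}}(y\circ_{A}z)u=r_{\circ_{A}}(z)r_{\circ_{A}}(y)u+l_{\circ_{A}}(y)r_{\circ_{A}}(z)u$, which becomes \eqref{eq:rep3} only after subtracting the identity \eqref{eq:rep2} obtained from the middle-slot case; since the equivalence you need is between the conjunction of the three $V$-cases and the conjunction of \eqref{eq:rep1}--\eqref{eq:rep3}, this does not affect the correctness of the argument, but the writeup should state the combination explicitly rather than attributing \eqref{eq:rep3} to the first-slot case alone.
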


\begin{ex}
Let $(A,\circ_{A})$ be a Leibniz algebra. Then $(\mathcal{L}_{\circ_{A}},\mathcal{R}_{\circ_{A}},A)$ is a representation of $(A,\circ_{A})$, which is called the \textbf{adjoint representation} of $(A,\circ_{A})$.
\end{ex}


\delete{
Let $A$ be a vector space with a multiplication $\circ_{A}:A\otimes A\rightarrow A$. Set linear maps $\mathcal{L}_{\circ_{A}},\mathcal{R}
_{\circ_{A}}:A\rightarrow \mathrm{End}(A)$ by
\begin{equation*}
\mathcal{L}_{\circ_{A}}(x)y=x\circ_{A}y=\mathcal{R}_{\circ_{A}}(y)x,\;\;\forall x,y\in A.
\end{equation*}}

Let $A$ and $V$ be vector spaces and $f:A\rightarrow\mathrm{End}(V)$ be a linear map.
We denote $f^{*}:A\rightarrow\mathrm{End}(V^{*})$ by
\begin{equation}
	\langle f^{*}(x)u^{*},v\rangle=-\langle u^{*},f(x)v\rangle,\;\forall x\in A, u^{*}\in V^{*},v\in V.
\end{equation}

\begin{lem}\label{ex:rep Leibniz}\cite{ST}
Let $(l_{\circ_{A}},r_{\circ_{A}},V)$ be a representation of a Leibniz algebra $(A,\circ_{A})$. Then  $(l^{*}_{\circ_{A}}, -l^{*}_{\circ_{A}}$
$-r^{*}_{\circ_{A}},V^{*})$ is also a representation of $(A,\circ_{A})$. In particular, $(\mathcal{L}^{*}_{\circ_{A}}, -\mathcal{L}^{*}_{\circ_{A}}-\mathcal{R}^{*}_{\circ_{A}}, A^{*})$ is a representation of $(A,\circ_{A})$.
\end{lem}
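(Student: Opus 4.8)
The plan is to check directly that the triple $(l^{*}_{\circ_{A}}, -l^{*}_{\circ_{A}}-r^{*}_{\circ_{A}}, V^{*})$ satisfies the three defining identities \eqref{eq:rep1}, \eqref{eq:rep2} and \eqref{eq:rep3} of a representation of $(A,\circ_{A})$. Abbreviate $L:=l^{*}_{\circ_{A}}$ and $R:=-l^{*}_{\circ_{A}}-r^{*}_{\circ_{A}}$, and record first the elementary consequences of the definition of the dual map that will carry all the bookkeeping: for all $x\in A$, $u^{*}\in V^{*}$ and $v\in V$,
\begin{align*}
\langle L(x)u^{*},v\rangle=-\langle u^{*},l_{\circ_{A}}(x)v\rangle,\qquad
\langle R(x)u^{*},v\rangle=\langle u^{*},l_{\circ_{A}}(x)v+r_{\circ_{A}}(x)v\rangle,
\end{align*}
and, adding these, $\langle (L(x)+R(x))u^{*},v\rangle=\langle u^{*},r_{\circ_{A}}(x)v\rangle$, i.e. $L(x)+R(x)=-r^{*}_{\circ_{A}}(x)$.

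Then I would verify the identities one at a time by pairing against an arbitrary $v\in V$ and transposing. For \eqref{eq:rep1}, the left-hand side becomes $-\langle u^{*},l_{\circ_{A}}(x\circ_{A}y)v\rangle$, to which \eqref{eq:rep1} for $(l_{\circ_{A}},r_{\circ_{A}},V)$ applies; expanding $L(x)L(y)u^{*}-L(y)L(x)u^{*}$ against $v$ with the first displayed identity used twice gives $\langle u^{*},l_{\circ_{A}}(y)l_{\circ_{A}}(x)v-l_{\circ_{A}}(x)l_{\circ_{A}}(y)v\rangle$, and the two sides agree. For \eqref{eq:rep2}, the same transposition sends $R(x\circ_{A}y)u^{*}$ to $\langle u^{*},(l_{\circ_{A}}+r_{\circ_{A}})(x\circ_{A}y)v\rangle$; substituting \eqref{eq:rep1} and \eqref{eq:rep2} for the original representation and comparing with the expansion of $\langle(L(x)R(y)-R(y)L(x))u^{*},v\rangle$ obtained from the displayed identities finishes it. For \eqref{eq:rep3}, it is cleanest to use $L(x)+R(x)=-r^{*}_{\circ_{A}}(x)$: then $\langle R(y)(L(x)+R(x))u^{*},v\rangle=\langle u^{*},r_{\circ_{A}}(x)(l_{\circ_{A}}(y)v+r_{\circ_{A}}(y)v)\rangle$, which vanishes by \eqref{eq:rep3} for $(l_{\circ_{A}},r_{\circ_{A}},V)$; this is precisely the assertion $R(y)L(x)=-R(y)R(x)$.

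No genuine difficulty arises: each step is just a transposition of the corresponding axiom for $(l_{\circ_{A}},r_{\circ_{A}},V)$, so the only thing to watch is the accumulation of signs coming from the dual-map convention together with the minus signs built into $R$. The one place where a little foresight pays off is \eqref{eq:rep3}, where isolating the simplification $L+R=-r^{*}_{\circ_{A}}$ before expanding avoids a needless detour. Finally, the ``in particular'' statement is the special case of the general one applied to the adjoint representation $(\mathcal{L}_{\circ_{A}},\mathcal{R}_{\circ_{A}},A)$.
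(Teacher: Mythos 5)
Your proposal is correct: the paper itself gives no proof of this lemma (it is quoted from \cite{ST}), and your direct verification by dualizing each axiom is exactly the standard argument, with the signs handled consistently and the simplification $l^{*}_{\circ_{A}}(x)+\bigl(-l^{*}_{\circ_{A}}-r^{*}_{\circ_{A}}\bigr)(x)=-r^{*}_{\circ_{A}}(x)$ making \eqref{eq:rep3} immediate. Nothing is missing.
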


Now we introduce the notion of an anti-pre-Leibniz algebra.

\begin{defi}
Let $A$ be a vector space with multiplications $\succ_{A},\prec_{A}:A\otimes A\rightarrow A$.
If the following equations hold:
\begin{eqnarray}
\delete{
&&x\succ_{A}(y\succ_{A} z+y\prec_{A} z)+x\prec_{A}(y\succ_{A} z+y\prec_{A} z)=(x\succ_{A} y+x\prec_{A} y)\succ_{A}z\nonumber\\
&&\ \
+(x\succ_{A} y+x\prec_{A} y)\prec_{A}z+y\succ_{A}(x\succ_{A} z+x\prec_{A} z)+y\prec_{A}(x\succ_{A} z+x\prec_{A} z),\\}
&&(x\circ_{A}y)\prec_{A}z=x\succ_{A}
(y\circ_{A}z)-y\succ_{A}(x\circ_{A}z),\label{eq:anLei1}\\
&&(x\circ_{A}y)\succ_{A}z=y\succ_{A}(x\succ_{A} z)-x\succ_{A}(y\succ_{A} z),\label{eq:anLei2}\\
&&x\prec_{A}(y\circ_{A}z)=(y\succ_{A}x)\prec_{A}z-y\succ_{A}(x\prec_{A}z),\label{eq:anLei3}\\
&&(x\succ_{A}y)\prec_{A}z=-(y\prec_{A}x)\prec_{A}z,\;\forall x,y,z\in A,\label{eq:anLei4}
\end{eqnarray}
where $\circ_{A}:A\otimes A\rightarrow A$ is also a multiplication defined by
\begin{equation}\label{eq:split}
x\circ_{A}y=x\succ_{A}y+x\prec_{A}y,\;\forall x,y\in A,
\end{equation}
then we say $(A,\succ_{A},\prec_{A})$ is an \textbf{anti-pre-Leibniz algebra}.
\end{defi}

\begin{pro}\label{pro:defi}
Let $A$ be a vector space with multiplications $\succ_{A},\prec_{A}:A\otimes A\rightarrow A$. Define a multiplication $\circ_{A}$ by \eqref{eq:split}.
Then the following statements are equivalent:
\begin{enumerate}
  \item\label{it:1} $(A,\succ_{A},\prec_{A})$ is an anti-pre-Leibniz algebra.
  \item\label{it:2} For $(A,\succ_{A},\prec_{A})$, \eqref{eq:anLei2}-\eqref{eq:anLei4} and the following equation hold:
  \begin{equation}\label{eq:anLei equivalent}
  (x\circ_{A}y)\succ_{A}z=x\prec_{A}
(y\circ_{A}z)-y\prec_{A}(x\circ_{A}z),\;\forall x,y,z\in A.
  \end{equation}
  \item \label{it:3} $(A,\circ_{A})$ is a Leibniz algebra with a representation
      $(-\mathcal{L}_{\succ_{A}},-\mathcal{R}_{\prec_{A}},A)$.
  \item \label{it:4} $(A,\circ_{A})$ is a Leibniz algebra with a representation
      $(-\mathcal{L}^{*}_{\succ_{A}},\mathcal{L}^{*}_{\succ_{A}}+
      \mathcal{R}^{*}_{\prec_{A}},A^{*})$.
\end{enumerate}
\end{pro}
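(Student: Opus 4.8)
The plan is to prove that \eqref{it:1}, \eqref{it:2} and \eqref{it:3} are equivalent by directly manipulating the defining identities, and then to deduce the equivalence with \eqref{it:4} from Lemma~\ref{ex:rep Leibniz}. The first step is to unwind condition \eqref{it:3}. Put $l_{\circ_{A}}:=-\mathcal{L}_{\succ_{A}}$ and $r_{\circ_{A}}:=-\mathcal{R}_{\prec_{A}}$, acting on $V:=A$, and substitute these into the representation axioms \eqref{eq:rep1}, \eqref{eq:rep2} and \eqref{eq:rep3}. After the sign changes coming from the negative multiplication operators cancel in pairs, one finds that these three axioms become exactly \eqref{eq:anLei2}, \eqref{eq:anLei3} and \eqref{eq:anLei4}, respectively. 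Hence \eqref{it:3} is equivalent to the conjunction of \eqref{eq:anLei2}--\eqref{eq:anLei4} with the Leibniz identity \eqref{eq:Leibniz} for $\circ_{A}$.

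The core of the argument is to show that, \emph{modulo} \eqref{eq:anLei2}--\eqref{eq:anLei4}, the following four statements are equivalent: (a)~equation \eqref{eq:anLei1}; (b)~the auxiliary identity
\begin{equation*}
(x\circ_{A}y)\circ_{A}z=x\succ_{A}(y\prec_{A}z)-y\succ_{A}(x\prec_{A}z),\qquad\forall\, x,y,z\in A;\tag{$\star$}
\end{equation*}
(c)~equation \eqref{eq:anLei equivalent}; (d)~the Leibniz identity \eqref{eq:Leibniz} for $\circ_{A}$. For (a)$\Leftrightarrow$(b), rewrite $(x\circ_{A}y)\prec_{A}z=(x\circ_{A}y)\circ_{A}z-(x\circ_{A}y)\succ_{A}z$ and eliminate $(x\circ_{A}y)\succ_{A}z$ using \eqref{eq:anLei2}. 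For (c)$\Leftrightarrow$(b), rewrite $(x\circ_{A}y)\succ_{A}z=(x\circ_{A}y)\circ_{A}z-(x\circ_{A}y)\prec_{A}z$, use \eqref{eq:anLei4} to turn $(x\circ_{A}y)\prec_{A}z$ into terms of type $(u\prec_{A}v)\prec_{A}w$, and expand $x\prec_{A}(y\circ_{A}z)$ and $y\prec_{A}(x\circ_{A}z)$ via \eqref{eq:anLei3} and \eqref{eq:anLei4}; the terms of type $(u\prec_{A}v)\prec_{A}w$ cancel and $(\star)$ drops out. For ``(a) and (c) together $\Rightarrow$ (d)'', add \eqref{eq:anLei1} to \eqref{eq:anLei equivalent}: the left-hand sides sum to $(x\circ_{A}y)\circ_{A}z$ and the right-hand sides to $x\circ_{A}(y\circ_{A}z)-y\circ_{A}(x\circ_{A}z)$, which is \eqref{eq:Leibniz}. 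For (d)$\Rightarrow$(b), expand $(x\circ_{A}y)\circ_{A}z$ in two ways --- once as $x\circ_{A}(y\circ_{A}z)-y\circ_{A}(x\circ_{A}z)$ unfolded by \eqref{eq:anLei3}, once directly as $(x\circ_{A}y)\succ_{A}z+(x\circ_{A}y)\prec_{A}z$ unfolded by \eqref{eq:anLei2} and \eqref{eq:anLei4} --- and equate the results; a last use of \eqref{eq:anLei4} makes the surplus terms collapse into $(\star)$. With this established, \eqref{it:1}, \eqref{it:2} and \eqref{it:3} are equivalent, because \eqref{it:1} is ``(a) together with \eqref{eq:anLei2}--\eqref{eq:anLei4}'', \eqref{it:2} is ``(c) together with \eqref{eq:anLei2}--\eqref{eq:anLei4}'', and \eqref{it:3} is ``(d) together with \eqref{eq:anLei2}--\eqref{eq:anLei4}'' by the first step.

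To obtain \eqref{it:4}, apply Lemma~\ref{ex:rep Leibniz} to the representation $(l_{\circ_{A}},r_{\circ_{A}},A)=(-\mathcal{L}_{\succ_{A}},-\mathcal{R}_{\prec_{A}},A)$ provided by \eqref{it:3}: it yields the representation $(l^{*}_{\circ_{A}},-l^{*}_{\circ_{A}}-r^{*}_{\circ_{A}},A^{*})=(-\mathcal{L}^{*}_{\succ_{A}},\mathcal{L}^{*}_{\succ_{A}}+\mathcal{R}^{*}_{\prec_{A}},A^{*})$, which is exactly the representation in \eqref{it:4}; so \eqref{it:1}--\eqref{it:3} imply \eqref{it:4}. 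For the converse, recall that in the finite-dimensional setting $(f^{*})^{*}=f$ under the canonical identification $A^{**}\cong A$; hence applying Lemma~\ref{ex:rep Leibniz} once more, this time to the representation appearing in \eqref{it:4}, returns $(-\mathcal{L}_{\succ_{A}},-\mathcal{R}_{\prec_{A}},A)$ --- a short computation with the formula defining $f\mapsto f^{*}$ confirms that the two sign twists cancel correctly --- so \eqref{it:3} holds. This closes the equivalences.

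I expect the only non-routine point to be the implication (d)$\Rightarrow$(b): it is the single place where an associator-type expression must be expanded along two independent routes, and one has to track the contributions of type $(u\prec_{A}v)\prec_{A}w$ with care in order to see them cancel through \eqref{eq:anLei4}. Everything else --- translating the representation axioms into \eqref{eq:anLei2}--\eqref{eq:anLei4}, reducing \eqref{eq:anLei1} and \eqref{eq:anLei equivalent} to $(\star)$, and the dualization argument for \eqref{it:4} --- is bookkeeping, as long as one stays disciplined about the signs produced by the negative multiplication operators and by passing to dual maps.
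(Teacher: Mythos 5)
Your proposal is correct and takes essentially the same route as the paper: you translate condition (3) into \eqref{eq:anLei2}--\eqref{eq:anLei4} together with the Leibniz identity, show that modulo \eqref{eq:anLei2}--\eqref{eq:anLei4} the identities \eqref{eq:anLei1}, \eqref{eq:anLei equivalent} and \eqref{eq:Leibniz} are interchangeable, and settle the equivalence with (4) via Lemma \ref{ex:rep Leibniz}. The only differences are organizational --- you pivot all equivalences through the auxiliary identity $(\star)$ rather than the paper's direct difference computations, and you spell out the double-dual argument for the implication from (4) back to (3), which the paper leaves implicit --- and both of these check out.
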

\begin{proof}
(\ref{it:1}) $\Longleftrightarrow$ (\ref{it:2}).
Let $x,y,z\in A$. Suppose that \eqref{eq:anLei2}-\eqref{eq:anLei4} hold.
Then we have
\begin{eqnarray*}
&&(x\circ_{A}y)\prec_{A}z-x\succ_{A}
(y\circ_{A}z)+y\succ_{A}(x\circ_{A}z)\\
&&\overset{\eqref{eq:split}}{=}
(x\succ_{A} y+x\prec_{A} y)\prec_{A}z-x\succ_{A}(y\succ_{A} z+y\prec_{A} z)+y\succ_{A}(x\succ_{A} z+x\prec_{A} z)\\
&&\overset{\eqref{eq:anLei2}-\eqref{eq:anLei4}}{=}
(x\circ_{A}y)\succ_{A}z+y\prec_{A}(x\circ_{A}z)-x\prec_{A}
(y\circ_{A}z).
\end{eqnarray*}
Thus \eqref{eq:anLei1} holds if and only if \eqref{eq:anLei equivalent} holds.

(\ref{it:1}) $\Longleftrightarrow$ (\ref{it:3}).
Let $x,y,z\in A$. Suppose that $(-\mathcal{L}_{\succ_{A}},-\mathcal{R}_{\prec_{A}},A)$
is a representation of a Leibniz algebra $(A,\circ_{A})$.
Then the triple $(-\mathcal{L}_{\succ_{A}},$
$-\mathcal{R}_{\prec_{A}},A)$
satisfies \eqref{eq:rep1}-\eqref{eq:rep3}, that is,
\eqref{eq:anLei2}-\eqref{eq:anLei4} hold respectively.
Furthermore, since $(A,\circ_{A})$ is a Leibniz algebra, we have
\begin{align*}
&x\circ_{A}(y\circ_{A}z)-(x\circ_{A}y)\circ_{A}z-
y\circ_{A}(x\circ_{A}z)\\
&=x\succ_{A}(y\succ_{A} z+y\prec_{A} z)+x\prec_{A}(y\succ_{A} z+y\prec_{A} z)-(x\succ_{A} y+x\prec_{A} y)\succ_{A}z\nonumber\\
&\ \
-(x\succ_{A} y+x\prec_{A} y)\prec_{A}z-y\succ_{A}(x\succ_{A} z+x\prec_{A} z)-y\prec_{A}(x\succ_{A} z+x\prec_{A} z)\\
&\overset{\eqref{eq:anLei2}-\eqref{eq:anLei4}}{=}
(x\circ_{A}y)\prec_{A}z-x\succ_{A}
(y\circ_{A}z)+y\succ_{A}(x\circ_{A}z).
\end{align*}
\delete{
\begin{align*}
-\mathcal{L}_{\succ_{A}}(x\circ_{A}y)z=
-\mathcal{L}_{\succ_{A}}(x)(-\mathcal{L}_{\succ_{A}})(y)z-
(-\mathcal{L}_{\succ_{A}})(y)(-\mathcal{L}_{\succ_{A}})(x)z\ \ &\Longleftrightarrow \ \ (x\circ_{A}y)\succ_{A}z=y\succ_{A}(x\succ_{A} z)-x\succ_{A}(y\succ_{A} z), \\
-\mathcal{R}_{\prec_{A}}(x\circ_{A}y)z=
-\mathcal{L}_{\succ_{A}}(x)(-\mathcal{R}_{\prec_{A}})(y)z-
(-\mathcal{R}_{\prec_{A}})(y)(-\mathcal{L}_{\succ_{A}})(x)z\ \ &\Longleftrightarrow \ \ x\prec_{A}(y\circ_{A}z)
=(y\succ_{A}x)\prec_{A}z-y\succ_{A}(x\prec_{A}z), \\
-\mathcal{R}_{\prec_{A}}(y)(-\mathcal{L}_{\succ_{A}})(x)z=
-(-\mathcal{R}_{\prec_{A}})(y)(-\mathcal{R}_{\prec_{A}})(x)z\ \ &\Longleftrightarrow \ \ (x\succ_{A}y)\prec_{A}z=-(y\prec_{A}x)\prec_{A}z.
\end{align*}}
Hence $(A,\succ_{A},\prec_{A})$ is an anti-pre-Leibniz algebra.
Conversely, a similar argument shows that, if $(A,\succ_{A},\prec_{A})$ is an anti-pre-Leibniz algebra,
then $(A,\circ_{A})$ is a Leibniz algebra with
a representation
$(-\mathcal{L}_{\succ_{A}},-\mathcal{R}_{\prec_{A}},A)$.

(\ref{it:3}) $\Longleftrightarrow$ (\ref{it:4}).
It follows from Lemma \ref{ex:rep Leibniz}.
\end{proof}

\begin{rmk}
	\begin{enumerate}
		\item Let $(A,\succ_{A},\prec_{A})$ be an anti-pre-Leibniz algebra and
		$(A,\circ_{A})$ be the Leibniz algebra with $\circ_{A}$ given by \eqref{eq:split}. Sometimes in order to clarify the relationship between algebras, we would like to say $(A,\circ_{A})$ is the \textbf{sub-adjacent Leibniz algebra} of $(A,\succ_{A}$,
		$\prec_{A})$, and $(A,\succ_{A},\prec_{A})$ is a \textbf{compatible anti-pre-Leibniz algebra of $(A,\circ_{A})$}.
		\item The classical splitting of Leibniz algebras concerns with the notion of a pre-Leibniz algebra, characterized as  a vector space $A$ together with multiplications $\triangleright_{A},\triangleleft_{A}:A\otimes A\rightarrow A$ such that $(A,\circ_{A}=\triangleright_{A}+\triangleleft_{A})$ is a Leibniz algebra (that is, $(A,\triangleright_{A},\triangleleft_{A})$ is a Leibniz-admissible algebra) and $(\mathcal{L}_{\triangleright_{A}},\mathcal{R}_{\triangleleft_{A}},A)$ is a representation of $(A,\circ_{A})$.
Hence the name of anti-pre-Leibniz algebras is justified due to Proposition \ref{pro:defi} (\ref{it:3}).
	  \item An anti-pre-Leibniz algebra $(A,\succ_{A},\prec_{A})$ such that $x\succ_{A}y=-y\prec_{A}x$ for all $x,y\in A$ is exactly an anti-pre-Lie algebra $(A,\succ_{A})$ (see \cite{LB2022}).
		Hence anti-pre-Leibniz algebras are natural generalizations of anti-pre-Lie algebras.
		Moreover, the role that anti-pre-Leibniz algebras play in the splitting theory of Leibniz algebras is an analogue of the role that anti-pre-Lie algebras play in the splitting theory of Lie algebras, as well as of the role that anti-dendriform algebras play in the splitting theory of associative algebras (see \cite{GLB}).
	\end{enumerate}

\end{rmk}

Recall that
an $\mathcal{O}$-operator \cite{ST} of a Leibniz algebra $(A,\circ_{A})$
with respect to a representation $(l_{\circ_{A}}, r_{\circ_{A}},V)$ is a linear map $T:V\rightarrow A$ satisfying the following equation:
\begin{eqnarray*}
	(Tu)\circ_{A}(Tv)= T\big(l_{\circ_{A}}(Tu)v+r_{\circ_{A}}(Tv)u\big),\;\forall u,v\in V.
\end{eqnarray*}
Now we introduce the notion of anti-$\mathcal{O}$-operators.

\begin{defi}
Let $(l_{\circ_{A}},r_{\circ_{A}},V)$ be a representation of a Leibniz algebra
$(A,\circ_{A})$. If a linear map $T:V\rightarrow A$ satisfies
the following equation
\begin{eqnarray}\label{eq:10}
(Tu)\circ_{A}(Tv)=-T\big(l_{\circ_{A}}(Tu)v+r_{\circ_{A}}(Tv)u\big),\;\forall u,v\in V,
\end{eqnarray}
then we say $T$ is an {\bf anti-$\mathcal{O}$-operator of $(A,\circ_{A})$ associated to $(l_{\circ_{A}},r_{\circ_{A}},V)$}. In particular,
an anti-$\mathcal{O}$-operator $T$ is called {\bf strong} if $T$
satisfies
\begin{eqnarray}\label{eq:strong}
l_{\circ_{A}}\big((Tu)\circ_{A}(Tv)\big)w+r_{\circ_{A}}\big((Tu)\circ_{A}(Tw)\big)v
-r_{\circ_{A}}\big((Tv)\circ_{A}(Tw)\big)u=0,\;\forall u,v,w\in V.
\end{eqnarray}
\delete{
$(V,\circ_{V})$ given by
\begin{equation}\label{eq:strong}
u\circ_{V}v=-l_{\circ_{A}}(Tu)v-r_{\circ_{A}}(Tv)u,\;\forall u,v\in V
\end{equation}
is a Leibniz algebra.}
\end{defi}

\begin{pro}\label{pro:1}
Let $T:V\rightarrow A$ be an anti-$\mathcal{O}$-operator of a Leibniz algebra $(A,\circ_{A})$ associated to $(l_{\circ_{A}},r_{\circ_{A}},V)$. Define the multiplications $\succ_{V},\prec_{V}:V\otimes V\rightarrow V$ by
\begin{eqnarray}\label{eq:111}
u\succ_{V} v=-l_{\circ_{A}}(Tu)v,\; u\prec_{V} v=-r_{\circ_{A}}(Tv)u,\;\forall u,v\in V.
\end{eqnarray}
Then $(V,\succ_{V},\prec_{V})$ satisfies \eqref{eq:anLei2}-\eqref{eq:anLei4}. Moreover, $(V,\circ_{V})$ is a Leibniz algebra such that $(V,\succ_{V},\prec_{V})$
is an anti-pre-Leibniz algebra if and only if $T$ is strong. In this case, $T$ is a homomorphism of Leibniz algebras from the
sub-adjacent Leibniz algebra $(V,\circ_{V})$ to $(A,\circ_{A})$.
Furthermore, there is an induced anti-pre-Leibniz algebra structure
on $T(V)=\{T(u)~|~u\in V\}\subseteq A$ given by
\begin{align}\label{induce}
(Tu)\succ_{A} (Tv)=T(u\succ_{V} v),\quad
(Tu)\prec_{A} (Tv)=T(u\prec_{V} v),\quad \forall
u,v\in V,
\end{align}
and $T$ is a homomorphism of anti-pre-Leibniz algebras.
\end{pro}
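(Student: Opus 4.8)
The plan is to hinge everything on one reformulation of the defining identity. Since $u\circ_{V}v=u\succ_{V}v+u\prec_{V}v=-\big(l_{\circ_{A}}(Tu)v+r_{\circ_{A}}(Tv)u\big)$, equation \eqref{eq:10} is exactly the assertion that $T(u\circ_{V}v)=(Tu)\circ_{A}(Tv)$ for all $u,v\in V$, while \eqref{eq:111} provides the dictionary $l_{\circ_{A}}(Tu)v=-u\succ_{V}v$ and $r_{\circ_{A}}(Tv)u=-u\prec_{V}v$. First I would check \eqref{eq:anLei2}--\eqref{eq:anLei4} for $(V,\succ_{V},\prec_{V})$: multiplying each by $-1$, rewriting via this dictionary, and replacing $T(u\circ_{V}v)$ by $(Tu)\circ_{A}(Tv)$ on the left-hand sides, each one becomes precisely one of the representation axioms \eqref{eq:rep1}--\eqref{eq:rep3} for $(l_{\circ_{A}},r_{\circ_{A}},V)$ evaluated at $Tu,Tv\in A$ -- e.g. \eqref{eq:anLei2} becomes $l_{\circ_{A}}((Tu)\circ_{A}(Tv))w=l_{\circ_{A}}(Tu)l_{\circ_{A}}(Tv)w-l_{\circ_{A}}(Tv)l_{\circ_{A}}(Tu)w$, which is \eqref{eq:rep1}, and \eqref{eq:anLei3}, \eqref{eq:anLei4} match \eqref{eq:rep2}, \eqref{eq:rep3}. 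Equivalently, $(-\mathcal{L}_{\succ_{V}},-\mathcal{R}_{\prec_{V}},V)$ satisfies \eqref{eq:rep1}--\eqref{eq:rep3}.

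For the equivalence with strongness, I would note first that an anti-pre-Leibniz algebra automatically has a Leibniz sub-adjacent algebra (Proposition \ref{pro:defi} (\ref{it:3})), so the clause ``$(V,\circ_{V})$ is a Leibniz algebra such that $(V,\succ_{V},\prec_{V})$ is anti-pre-Leibniz'' just means ``$(V,\succ_{V},\prec_{V})$ is anti-pre-Leibniz''. Since \eqref{eq:anLei2}--\eqref{eq:anLei4} already hold, the equivalence (\ref{it:1})$\Longleftrightarrow$(\ref{it:2}) of Proposition \ref{pro:defi} reduces this to verifying \eqref{eq:anLei equivalent} for $(V,\succ_{V},\prec_{V})$. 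Running the same dictionary in reverse (each $\succ_{V}$ becomes $-l_{\circ_{A}}(T\cdot)$, each $\prec_{V}$ becomes $-r_{\circ_{A}}(T\cdot)$, and $(Tu)\circ_{A}(Tv)=T(u\circ_{V}v)$), the identity \eqref{eq:anLei equivalent} on $V$ is, up to an overall sign, exactly the strongness identity \eqref{eq:strong} for $T$. Hence $(V,\succ_{V},\prec_{V})$ is anti-pre-Leibniz if and only if $T$ is strong; in that case $(V,\circ_{V})$ is its sub-adjacent Leibniz algebra, and the reformulated identity $T(u\circ_{V}v)=(Tu)\circ_{A}(Tv)$ exhibits $T$ as a homomorphism of Leibniz algebras onto the Leibniz subalgebra $T(V)\subseteq A$.

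For the induced structure on $T(V)$, the only step not reducible to a substitution is well-definedness of \eqref{induce}, for which I would first establish the vanishing lemma: if $Ta=0$ and $x=Tb\in T(V)$, then applying \eqref{eq:10} to the pairs $(b,a)$ and $(a,b)$ and using $l_{\circ_{A}}(Ta)=r_{\circ_{A}}(Ta)=0$ yields $0=(Tb)\circ_{A}(Ta)=-T(l_{\circ_{A}}(Tb)a)$ and $0=(Ta)\circ_{A}(Tb)=-T(r_{\circ_{A}}(Tb)a)$, i.e. $T\big(l_{\circ_{A}}(x)a\big)=T\big(r_{\circ_{A}}(x)a\big)=0$. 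Consequently, when $Tu=Tu'$ and $Tv=Tv'$ the elements $u\succ_{V}v-u'\succ_{V}v'=-l_{\circ_{A}}(Tu)(v-v')$ and $u\prec_{V}v-u'\prec_{V}v'=-r_{\circ_{A}}(Tv)(u-u')$ lie in $\ker T$, so \eqref{induce} is unambiguous; $T(V)$ is closed under $\succ_{A}$ and $\prec_{A}$ by construction, and $\succ_{A}+\prec_{A}=\circ_{A}$ on $T(V)$ by the reformulated identity. Since $T$ then intertwines $\succ_{V}$ with $\succ_{A}$ and $\prec_{V}$ with $\prec_{A}$ and is surjective onto $T(V)$, applying $T$ to \eqref{eq:anLei1}--\eqref{eq:anLei4} (valid on $V$, as $(V,\succ_{V},\prec_{V})$ is anti-pre-Leibniz) and using that every element of $T(V)$ has the form $Tu$ shows these identities hold on $(T(V),\succ_{A},\prec_{A})$; thus $(T(V),\succ_{A},\prec_{A})$ is an anti-pre-Leibniz algebra and $T$ is a homomorphism of anti-pre-Leibniz algebras.

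I expect no conceptual obstacle: the content is the bookkeeping of signs and of left/right positions when translating \eqref{eq:anLei1}--\eqref{eq:anLei4} and \eqref{eq:strong} across the dictionary $l_{\circ_{A}}(Tu)v=-u\succ_{V}v$, $r_{\circ_{A}}(Tv)u=-u\prec_{V}v$, $T(u\circ_{V}v)=(Tu)\circ_{A}(Tv)$. The one spot that genuinely needs an argument rather than a substitution is the well-definedness of \eqref{induce}, which is dispatched by the vanishing lemma above.
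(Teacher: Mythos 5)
Your proposal is correct and follows essentially the same route as the paper: translate \eqref{eq:anLei2}--\eqref{eq:anLei4} through the dictionary $u\succ_V v=-l_{\circ_A}(Tu)v$, $u\prec_V v=-r_{\circ_A}(Tv)u$, $T(u\circ_V v)=(Tu)\circ_A(Tv)$ into instances of the representation axioms \eqref{eq:rep1}--\eqref{eq:rep3} at elements of $T(V)$, and identify the remaining anti-pre-Leibniz condition with the strongness identity \eqref{eq:strong}. The only deviations are cosmetic refinements: you run the second step through \eqref{eq:anLei equivalent} (via Proposition \ref{pro:defi}), under which \eqref{eq:strong} appears verbatim rather than after a further application of the representation axioms to the \eqref{eq:anLei1} discrepancy as in the paper, and you spell out the well-definedness of \eqref{induce} on $\ker T$ via your vanishing lemma, a point the paper subsumes under ``a straightforward checking.''
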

\begin{proof}
Let $u,v,w\in V$. Then we have
\begin{eqnarray*}
&&(u\circ_{V} v)\succ_{V}w
=-l_{\circ_{A}}T\big(-l_{\circ_{A}}(Tu)v-r_{\circ_{A}}(Tv)u\big)w
=-l_{\circ_{A}}\big((Tu)\circ_{A} (Tv)\big)w,\\
&&v\succ_{V}(u\succ_{V}w)=l_{\circ_{A}}(Tv)l_{\circ_{A}}(Tu)w,\\
&&-u\succ_{V}(v\succ_{V}w)=-l_{\circ_{A}}(Tu)l_{\circ_{A}}(Tv)w,\\
&&u\prec_{V}(v\circ_{V} w)=-r_{\circ_{A}}T\big(-l_{\circ_{A}}(Tv)w-r_{\circ_{A}}(Tw)v\big)u
=-r_{\circ_{A}}\big((Tv)\circ_{A} (Tw)\big)u,\\
&&(v\succ_{V}u)\prec_{V}w=r_{\circ_{A}}(Tw)l_{\circ_{A}}(Tv)u,\\
&&-v\succ_{V}(u\prec_{V}w)=-l_{\circ_{A}}(Tv)r_{\circ_{A}}(Tw)u,\\
&&(u\succ_{V}v)\prec_{V}w=r_{\circ_{A}}(Tw)l_{\circ_{A}}(Tu)v,\\
&&-(v\prec_{V}u)\prec_{V}w=-r_{\circ_{A}}(Tw)r_{\circ_{A}}(Tu)v.
\end{eqnarray*}
By \eqref{eq:rep1}-\eqref{eq:rep3}, we have
\begin{eqnarray*}
&&(u\circ_{V}v)\succ_{V}w
=v\succ_{V}(u\succ_{V}w)-u\succ_{V}(v\succ_{V}w),\\
&&u\prec_{V}(v\circ_{V} w)
=(v\succ_{V}u)\prec_{V}w-v\succ_{V}(u\prec_{V}w),\\
&&(u\succ_{V}v)\prec_{V}w=-(v\prec_{V}u)\prec_{V}w.
\end{eqnarray*}
Thus \eqref{eq:anLei2}-\eqref{eq:anLei4} hold on $(V,\succ_{V},\prec_{V})$. Moreover, $(V,\circ_{V})$ is a Leibniz algebra if and only if \eqref{eq:anLei1} holds on $(V,\succ_{V},\prec_{V})$, that is,
\begin{eqnarray*}
&&(u\circ_{A}v)\prec_{A}w-u\succ_{A}(v\circ_{A}w)
+v\succ_{A}(u\circ_{A}w)\\
&&=l_{\circ_{A}}\big((Tu)\circ_{A}(Tv)\big)w+r_{\circ_{A}}\big((Tu)\circ_{A}(Tw)\big)v
-r_{\circ_{A}}\big((Tv)\circ_{A}(Tw)\big)u=0.
\end{eqnarray*}
The other results follow from a straightforward checking.
\end{proof}

\begin{pro}\label{pro:strong}
An invertible anti-$\mathcal{O}$-operator of a Leibniz algebra is automatically strong.
\end{pro}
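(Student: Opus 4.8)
The plan is to reduce the strong condition \eqref{eq:strong} to the statement that the sub-adjacent multiplication $\circ_{V}$ on the source space $V$ makes $(V,\circ_{V})$ a Leibniz algebra, and then to obtain the latter for free from the invertibility of $T$. So let $T:V\rightarrow A$ be an invertible anti-$\mathcal{O}$-operator of the Leibniz algebra $(A,\circ_{A})$ associated to a representation $(l_{\circ_{A}},r_{\circ_{A}},V)$, and equip $V$ with the multiplications $\succ_{V},\prec_{V}$ and $\circ_{V}:=\succ_{V}+\prec_{V}$ defined as in \eqref{eq:111}.

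The first step is to observe that the defining identity \eqref{eq:10} of an anti-$\mathcal{O}$-operator says exactly that $(Tu)\circ_{A}(Tv)=T(u\circ_{V}v)$ for all $u,v\in V$. Since $T$ is a linear isomorphism, this exhibits $T$ as an isomorphism of (not necessarily Leibniz) algebras from $(V,\circ_{V})$ onto $(A,\circ_{A})$. As the Leibniz identity \eqref{eq:Leibniz} is preserved under algebra isomorphisms and $(A,\circ_{A})$ is a Leibniz algebra, we conclude that $(V,\circ_{V})$ is a Leibniz algebra.

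The second step is to invoke Proposition \ref{pro:1}. There $(V,\succ_{V},\prec_{V})$ is shown always to satisfy \eqref{eq:anLei2}-\eqref{eq:anLei4}, and moreover that $(V,\circ_{V})$ is a Leibniz algebra if and only if \eqref{eq:anLei1} holds on $(V,\succ_{V},\prec_{V})$, an identity which, after substituting \eqref{eq:111}, is literally the strong condition \eqref{eq:strong}. Combining this equivalence with the first step, where $(V,\circ_{V})$ was already seen to be a Leibniz algebra, we obtain \eqref{eq:strong}, i.e. $T$ is strong. I do not foresee any real obstacle here: the computation identifying \eqref{eq:anLei1} on $V$ with \eqref{eq:strong} is already carried out inside the proof of Proposition \ref{pro:1}, and the only genuinely new input is the elementary remark that an invertible intertwiner transports the Leibniz identity, which is precisely what makes the Leibniz property of $(V,\circ_{V})$ automatic rather than an additional hypothesis. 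The one point requiring a little care is keeping the dictionary \eqref{eq:111} between $l_{\circ_{A}}(Tu)$, $r_{\circ_{A}}(Tv)$ and $\succ_{V}$, $\prec_{V}$ straight when making that identification.
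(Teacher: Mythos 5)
Your proposal is correct and follows essentially the same route as the paper: the paper likewise rewrites \eqref{eq:10} as $T^{-1}(Tu\circ_{A}Tv)=u\circ_{V}v$, transports the Leibniz identity from $(A,\circ_{A})$ to $(V,\circ_{V})$ through the bijection $T$, and then invokes Proposition \ref{pro:1} to identify the Leibniz property of $(V,\circ_{V})$ with the strong condition \eqref{eq:strong}. The only cosmetic difference is that the paper writes out the conjugated Leibniz identity term by term where you appeal to transport of structure under an algebra isomorphism.
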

\begin{proof}
	Let $T:V\rightarrow A$ be an invertible anti-$\mathcal{O}$-operator of a Leibniz algebra $(A,\circ_{A})$ associated to $(l_{\circ_{A}},r_{\circ_{A}},V)$. Define the multiplications $\succ_{V},\prec_{V}:V\otimes V\rightarrow V$ by \eqref{eq:111}. Then we have
	\begin{eqnarray*} T^{-1}(Tu\circ_{A}Tv)&=&-T^{-1}\Big(T\big(l_{\circ_{A}}(Tu)v+r_{\circ_{A}}(Tv)u\big)\Big)=-l_{\circ_{A}}(Tu)v-r_{\circ_{A}}(Tv)u\\ &=&u\succ_{V}v+u\prec_{V}v=u\circ_{V}v,
	\end{eqnarray*}
	for all $u,v\in V$. Therefore for all $u,v,w\in V$, we have
\begin{eqnarray*}
&&u\circ_{V}(v\circ_{V}w)=T^{-1}\big(Tu\circ_{A}T(v\circ_{V}w)\big)
=T^{-1}\big(Tu\circ_{A}(Tv\circ_{A}Tw)\big),\\
&&(u\circ_{V}v)\circ_{V}w=T^{-1}\big(T(u\circ_{V}v)\circ_{A}Tw\big)
=T^{-1}\big((Tu\circ_{A}Tv)\circ_{A}Tw\big),\\
&&v\circ_{V}(u\circ_{V}w)=T^{-1}\big(Tv\circ_{A}T(u\circ_{V}w)\big)
=T^{-1}\big(Tv\circ_{A}(Tu\circ_{A}Tw)\big).
\end{eqnarray*}
Thus $(V,\circ_{V})$ is a Leibniz algebra. Then by Proposition \ref{pro:1}, $T$ is strong.
\end{proof}

\begin{thm}\label{thm:2}
Let $(A,\circ_{A})$ be a Leibniz algebra. Then there is a compatible anti-pre-Leibniz algebra $(A,\succ_{A},\prec_{A})$ of $(A,\circ_{A})$ if and only if there is an invertible anti-$\mathcal{O}$-operator $T$ of $(A,\circ_{A})$ associated to $(l_{\circ_{A}},r_{\circ_{A}},V)$. In this case, the multiplications $\succ_{A},\prec_{A}$ are given by
\begin{equation}\label{eq:pro:2.14}
x\succ_{A}y=-T\big(l_{\circ_{A}}(x)T^{-1}(y)\big),
\;x\prec_{A}y=-T\big(r_{\circ_{A}}(y)T^{-1}(x)\big),\;\forall x,y\in A.
\end{equation}
\end{thm}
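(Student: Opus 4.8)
The plan is to derive both directions from the machinery already in place: Propositions \ref{pro:1} and \ref{pro:strong}, and the representation-theoretic characterization of anti-pre-Leibniz algebras in Proposition \ref{pro:defi}(\ref{it:3}). So essentially nothing new has to be proved from scratch; the theorem is a repackaging of those results in the invertible case, and the work is bookkeeping.

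For the ``if'' direction, I would start from an invertible anti-$\mathcal{O}$-operator $T\colon V\to A$ associated to $(l_{\circ_{A}},r_{\circ_{A}},V)$. By Proposition \ref{pro:strong}, $T$ is automatically strong, so Proposition \ref{pro:1} applies: the multiplications $\succ_{V},\prec_{V}$ defined by \eqref{eq:111} make $(V,\succ_{V},\prec_{V})$ an anti-pre-Leibniz algebra, $T$ is a homomorphism of anti-pre-Leibniz algebras, and there is an induced anti-pre-Leibniz structure on $T(V)$ given by \eqref{induce}. Since $T$ is invertible, $T(V)=A$, so \eqref{induce} is a genuine anti-pre-Leibniz structure on all of $A$; substituting $x=Tu$, $y=Tv$ (so $u=T^{-1}x$, $v=T^{-1}y$) into \eqref{induce} and unwinding \eqref{eq:111} gives precisely the formulas \eqref{eq:pro:2.14}. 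It then remains only to check that the multiplication built from these $\succ_{A},\prec_{A}$ via \eqref{eq:split} is the original $\circ_{A}$: evaluating the defining identity \eqref{eq:10} at $u=T^{-1}x$, $v=T^{-1}y$ yields $x\circ_{A}y=-T\big(l_{\circ_{A}}(x)T^{-1}y+r_{\circ_{A}}(y)T^{-1}x\big)=x\succ_{A}y+x\prec_{A}y$. Hence $(A,\succ_{A},\prec_{A})$ is a compatible anti-pre-Leibniz algebra of $(A,\circ_{A})$, with the stated formulas.

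For the ``only if'' direction, given a compatible anti-pre-Leibniz algebra $(A,\succ_{A},\prec_{A})$ of $(A,\circ_{A})$, I would take $V=A$, the representation $(l_{\circ_{A}},r_{\circ_{A}},V)=(-\mathcal{L}_{\succ_{A}},-\mathcal{R}_{\prec_{A}},A)$ of $(A,\circ_{A})$ supplied by Proposition \ref{pro:defi}(\ref{it:3}), and $T=\mathrm{id}_{A}$. This $T$ is trivially invertible, and $-T\big(l_{\circ_{A}}(Tu)v+r_{\circ_{A}}(Tv)u\big)=\mathcal{L}_{\succ_{A}}(u)v+\mathcal{R}_{\prec_{A}}(v)u=u\succ_{A}v+u\prec_{A}v=u\circ_{A}v=(Tu)\circ_{A}(Tv)$ by \eqref{eq:split}, so $T=\mathrm{id}_{A}$ satisfies \eqref{eq:10} and is an invertible anti-$\mathcal{O}$-operator; with $T=\mathrm{id}_{A}$ the formulas \eqref{eq:pro:2.14} become the tautologies $x\succ_{A}y=\mathcal{L}_{\succ_{A}}(x)y$ and $x\prec_{A}y=\mathcal{R}_{\prec_{A}}(y)x$. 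I do not anticipate any real obstacle here: every substantive step (that an invertible anti-$\mathcal{O}$-operator is strong, and that \eqref{eq:111} defines an anti-pre-Leibniz structure) has already been isolated in the preceding propositions, so the only care needed is matching \eqref{induce} with \eqref{eq:pro:2.14} and confirming the compatibility with $\circ_{A}$.
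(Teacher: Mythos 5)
Your proposal is correct and follows essentially the same route as the paper: invertibility plus Propositions \ref{pro:strong} and \ref{pro:1} transport the structure \eqref{eq:111} along $T$ to get \eqref{eq:pro:2.14}, with compatibility read off from \eqref{eq:10}, and conversely $T=\mathrm{id}$ with the representation $(-\mathcal{L}_{\succ_{A}},-\mathcal{R}_{\prec_{A}},A)$ from Proposition \ref{pro:defi}(\ref{it:3}). Your explicit verification that $\succ_{A}+\prec_{A}=\circ_{A}$ via \eqref{eq:10} is a small but welcome elaboration of a step the paper dismisses as obvious.
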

\begin{proof}
Let $T:V\rightarrow A$ be an invertible anti-$\mathcal{O}$-operator of $(A,\circ_{A})$ associated to $(l_{\circ_{A}},r_{\circ_{A}},V)$.
Then by Proposition \ref{pro:1} and Proposition \ref{pro:strong}, there is an induced anti-pre-Leibniz algebra structure $(V,\succ_{V},\prec_{V})$ on $V$ with $\succ_{A},\prec_{A}$ given by \eqref{eq:111}. The linear isomorphism $T$ gives an anti-pre-Leibniz algebra structure $(A,\succ_{T},\prec_{T})$ on $A$ by
\begin{eqnarray*}
&&x\succ_{T}y=T\big(T^{-1}(x)\succ_{V}T^{-1}(y)\big)
\overset{\eqref{eq:111}}{=}-T\big(l_{\circ_{A}}(x)T^{-1}(y)\big)=x\succ_{A}y,\\
&&x\prec_{T}y=T\big(T^{-1}(x)\prec_{V}T^{-1}(y)\big)
\overset{\eqref{eq:111}}{=}-T\big(r_{\circ_{A}}(y)T^{-1}(x)\big)=x\prec_{A}y,\;\forall x,y\in A.
\end{eqnarray*}
Moreover, obviously, we have $x\succ_{A}y+x\prec_{A}y=x\circ_{A}y$ for
all $x,y\in A$.
That is, $(A,\succ_{A},\prec_{A})$ is a compatible anti-pre-Leibniz algebra of $(A,\circ_{A})$.

Conversely, suppose that $(A,\succ_{A},\prec_{A})$ is an anti-pre-Leibniz algebra. Then $T=\mathrm{id}$ is an invertible anti-$\mathcal{O}$-operator of $(A,\circ_{A})$ associated to $(-\mathcal{L}_{\succ_{A}},-\mathcal{R}_{\prec_{A}},A)$.
\end{proof}

 Recall the notion of (quadratic) perm algebras.
\begin{defi}
\begin{enumerate}
\item
A perm algebra \cite{Chap2001}
is a vector space $A$ with a
multiplication $\star_{A}:A\otimes A\rightarrow A$ such that the following identity holds:
\begin{equation}\label{eq:perm}
x\star_{A}(y\star_{A}z)=(x\star_{A}y)\star_{A}z=(y\star _{A}x)\star_{A}z,\;\forall x,y,z\in A.
\end{equation}
\item
A quadratic perm algebra \cite{Chap2005} $(A,\star_{A},\omega)$ is a perm algebra $(A,\star_{A})$ with a nondegenerate skew-symmetric bilinear form $\omega$ satisfying the following \textbf{invariant} condition:
\begin{equation}
\omega(x\star_{A}y,z)=\omega(x,y\star_{A}z-z\star_{A}y),\;\forall x,y,z\in A.
\end{equation}
\end{enumerate}
\end{defi}

Let $A$ be a vector space with multiplications $\circ_{i_{A}}:A\otimes A\rightarrow A$, where $i\in\{1,\cdots,n\}$ and $P:A\rightarrow A$ be a linear map.
We say $P$ is an {\bf averaging operator} of the collection $(A,\circ_{1_{A}},\cdots,\circ_{n_{A}})$ if the following equation holds:
\begin{equation}\label{eq:Ao}
P(x)\circ_{i_{A}}P(y)=P\big(P(x)\circ_{i_{A}}y\big)=P\big(x\circ_{i_{A}} P(y)\big), \;\forall x,y\in A, 1\leq i\leq n.
\end{equation}
We say $P$ is a {\bf derivation} of  $(A,\circ_{1_{A}},\cdots,\circ_{n_{A}})$ if the following equation holds:
\begin{equation}\label{eq:der}
P(x \circ_{i_{A}} y)=P(x)\circ_{i_{A}}y+x\circ_{i_{A}} P(y),\;\forall x,y\in A, 1\leq i\leq n.
\end{equation}

\delete{
Let $\mathcal{P}$ be a binary operad with one binary operation and $(A,\circ_{A})$ be a $\mathcal{P}$-algebra. A linear map $P:A\rightarrow A$ is called an averaging operator or a derivation on $(A,\circ_{A})$ if
\begin{equation}\label{eq:Ao}
P(x)\circ_{A}P(y)=P\big(P(x)\circ_{A}y\big)=P\big(x\circ_{A} P(y)\big),\;\text{or}\;P(x\circ_{A}y)=P(x)\circ_{A}y+x\circ_{A} P(y),\;\forall x,y\in A.
\end{equation}
\delete{
A linear map $P:A\rightarrow A$ is called an averaging operator or a derivation on a perm algebra $(A,\star_{A})$ if
\begin{equation}\label{eq:Ao}
P(x)\star_{A}P(y)=P\big(P(x)\star_{A}y\big)=P\big(x\star_{A} P(y)\big),\;\text{or}\;P(x\star_{A}y)=P(x)\star_{A}y+x\star_{A} P(y),\;\forall x,y\in A.
\end{equation}}}

\begin{pro}\label{pro:perm and Leibniz}
Let $(A,\star_{A},\omega)$ be a quadratic perm algebra. Suppose that $P:A\rightarrow A$ is an averaging operator or a derivation on $(A,\star_{A})$. Then there is a  Leibniz algebra $(A,\circ_{A})$ given by
\begin{eqnarray}
x\circ_{A}y=P(x)\star_{A}y-x\star_{A}P(y),\;\forall x,y\in A,
\end{eqnarray}
and $\omega$ is a nondegenerate skew-symmetric $2$-cocycle on $(A,\circ_{A})$.
\end{pro}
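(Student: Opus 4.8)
The plan is to check the two assertions separately: that $(A,\circ_{A})$ obeys the Leibniz identity \eqref{eq:Leibniz}, and that $\omega$ — which is nondegenerate and skew-symmetric by hypothesis — satisfies the $2$-cocycle condition \eqref{eq:2-cocyle}. Throughout I would use the two immediate consequences of \eqref{eq:perm}: that left multiplications commute and that every triple product may be brought into left-normed form, namely $x\star_{A}(y\star_{A}z)=(x\star_{A}y)\star_{A}z=(y\star_{A}x)\star_{A}z$; and, for the form, the combination of the invariant condition of the quadratic perm algebra with skew-symmetry of $\omega$, i.e. $\omega(a\star_{A}b,c)=\omega(a,b\star_{A}c)-\omega(a,c\star_{A}b)$.

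For the Leibniz identity I would split according to the hypothesis on $P$, computing $P(y\circ_{A}z)$ first in each case. If $P$ is a derivation, then applying \eqref{eq:der} to $P(x)\star_{A}y$ and to $x\star_{A}P(y)$ and cancelling the cross terms gives $P(y\circ_{A}z)=P^{2}(y)\star_{A}z-y\star_{A}P^{2}(z)$; if $P$ is an averaging operator, then \eqref{eq:Ao} yields $P(P(y)\star_{A}z)=P(y)\star_{A}P(z)=P(y\star_{A}P(z))$, whence $P(y\circ_{A}z)=0$. In either case I would expand $x\circ_{A}(y\circ_{A}z)$, $(x\circ_{A}y)\circ_{A}z$ and $y\circ_{A}(x\circ_{A}z)$ from the definition of $\circ_{A}$, push every triple product into left-normed form via \eqref{eq:perm}, and verify that their alternating sum vanishes: the terms cancel in pairs, each pair either matching identically or matching after the exchange $(a\star_{A}b)\star_{A}c=(b\star_{A}a)\star_{A}c$. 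The averaging case is lighter, since $P(y\circ_{A}z)=0$ removes three of the six summands at the outset.

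For the $2$-cocycle condition I would expand both sides of \eqref{eq:2-cocyle} through $x\circ_{A}y=P(x)\star_{A}y-x\star_{A}P(y)$, so that every resulting term has the shape $\omega(u,v\star_{A}w)$ with exactly one of $u,v,w$ in the image of $P$. Moving the $\star_{A}$-product into the first argument of $\omega$ by skew-symmetry and then applying the invariant condition, both sides collapse to the same expression: writing $p=P(x)$, $q=P(y)$, $r=P(z)$, the summands $\omega(x,q\star_{A}z)$ and $\omega(x,z\star_{A}q)$ common to the two sides cancel, the remainder of the left side becomes $\omega(p,z\star_{A}y-y\star_{A}z)=\omega(p\star_{A}z,y)$, and the remainder of the right side becomes $\omega(x\star_{A}r,y)+\bigl(\omega(p\star_{A}z,y)-\omega(x\star_{A}r,y)\bigr)=\omega(p\star_{A}z,y)$, so the two agree. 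It is worth noting that this part uses only \eqref{eq:perm}, the invariance and skew-symmetry of $\omega$, and no property of $P$ at all. Since $\omega$ is nondegenerate and skew-symmetric to begin with, it is then a nondegenerate skew-symmetric $2$-cocycle on $(A,\circ_{A})$, which finishes the proof.

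The only genuine difficulty is organizational rather than conceptual: in the derivation case the expansion of \eqref{eq:Leibniz} produces on the order of a dozen left-normed triple products, several of them carrying $P^{2}$, and one must keep the signs straight when pairing them off — but, as above, no identity beyond \eqref{eq:perm} enters there.
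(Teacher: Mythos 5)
Your proposal is correct and follows essentially the same route as the paper: a direct expansion of the Leibniz identity using \eqref{eq:Ao} (resp. \eqref{eq:der}) together with the perm identity \eqref{eq:perm}, and a direct verification of \eqref{eq:2-cocyle} using only skew-symmetry and the invariance of $\omega$, with no property of $P$ needed — exactly as in the paper. Your preliminary computation of $P(y\circ_{A}z)$ (vanishing in the averaging case, $P^{2}(y)\star_{A}z-y\star_{A}P^{2}(z)$ in the derivation case) is just a mild streamlining of the same bookkeeping.
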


\begin{proof}
Let $P:A\rightarrow A$ be an averaging operator on $(A,\star_{A})$. Then for all $x,y,z\in A$, we have
\begin{align*}
x\circ_{A}(y\circ_{A}z)
&=P(x)\star_{A}\big(P(y)\star_{A}z-y\star_{A}P(z)\big)
-x\star_{A}P\big(P(y)\star_{A}z-y\star_{A}P(z)\big)\\
&\overset{\eqref{eq:Ao}}{=}P(x)\star_{A}\big(P(y)\star_{A}z\big)-P(x)\star_{A}\big(y\star_{A}P(z)\big),\\
(x\circ_{A}y)\circ_{A}z
&=P\big(P(x)\star_{A}y-x\star_{A}P(y)\big)\star_{A}z
-\big(P(x)\star_{A}y-x\star_{A}P(y)\big)\star_{A}P(z)\\
&\overset{\eqref{eq:Ao}}{=}-\big(P(x)\star_{A}y\big)\star_{A}P(z)
+\big(x\star_{A}P(y)\big)\star_{A}P(z),\\
y\circ_{A}(x\circ_{A}z)
&\overset{\eqref{eq:Ao}}{=}P(y)\star_{A}\big(P(x)\star_{A}z\big)-P(y)\star_{A}\big(x\star_{A}P(z)\big).
\end{align*}
Thus $(A,\circ_{A})$ is a Leibniz algebra. Similarly, if $P:A\rightarrow A$ is a derivation on $(A,\star_{A})$, then  $(A,\circ_{A})$ is also a Leibniz algebra. Moreover, we have
\begin{align*}
\omega(z,x\circ_{A}y)&=-\omega\big(P(x)\star_{A}y-x\star_{A}P(y),z\big)\\
&=\omega\big(x,P(y)\star_{A}z-z\star_{A}P(y)\big)
-\omega\big(P(x),y\star_{A}z-z\star_{A}y\big),\\
-\omega(y,x\circ_{A}z)
&=\omega\big(P(x),z\star_{A}y-y\star_{A}z\big)
-\omega\big(x,P(z)\star_{A}y-y\star_{A}P(z)\big),\\
\omega(x,y\circ_{A}z+z\circ_{A}y)
&=\omega\big(x,P(y)\star_{A}z-z\star_{A}P(y)\big)+\omega\big(x,P(z)\star_{A}y-y\star_{A}P(z)\big).
\end{align*}
Then \eqref{eq:2-cocyle} holds and thus the conclusion follows.
\end{proof}

\begin{lem}
	The Levi-Civita products $\succ_{A},\prec_{A}:A\otimes A\rightarrow A$ of a Leibniz algebra $(A,\circ_{A})$ with a nondegenerate skew-symmetric $2$-cocycle $\omega$ are given by
\begin{eqnarray}
	\omega(x\succ_{A}y,z)&=&\omega(y,x\circ_{A}z),\label{eq:cor3}\\
	\omega(x\prec_{A}y,z)&=&-\omega(x,y\circ_{A}z+z\circ_{A}y),\;\forall x,y,z\in A.\label{eq:cor4}
\end{eqnarray}
\end{lem}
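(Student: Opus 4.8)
The plan is to derive \eqref{eq:cor3} and \eqref{eq:cor4} directly from the defining equations \eqref{eq:LV1}--\eqref{eq:LV2} of the Levi-Civita products by substituting the $2$-cocycle condition \eqref{eq:2-cocyle}. Since $\omega$ is nondegenerate, each of \eqref{eq:LV1} and \eqref{eq:LV2} determines its left-hand product uniquely, so it is enough to show that the right-hand side of \eqref{eq:LV2} equals $2\omega(y,x\circ_{A}z)$ and that the right-hand side of \eqref{eq:LV1} equals $-2\omega(x,y\circ_{A}z+z\circ_{A}y)$; the product governed by \eqref{eq:LV2} is then $\succ_{A}$ and the one governed by \eqref{eq:LV1} is $\prec_{A}$, in accordance with Definition \ref{defi:232}.

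The only input is a skew-symmetric reformulation of \eqref{eq:2-cocyle}: for all $x,y,z\in A$,
\[
\omega(x\circ_{A}y,z)=\omega(y,x\circ_{A}z)-\omega(x,y\circ_{A}z+z\circ_{A}y),
\]
together with the immediate consequences $\omega(y\circ_{A}z,x)=-\omega(x,y\circ_{A}z)$, $\omega(z\circ_{A}y,x)=-\omega(x,z\circ_{A}y)$ and $\omega(x\circ_{A}z,y)=-\omega(y,x\circ_{A}z)$ of skew-symmetry. Feeding these four identities into the four-term right-hand side of \eqref{eq:LV1}, the two $\pm\omega(y,x\circ_{A}z)$ terms cancel and the remaining terms double up to $-2\omega(x,y\circ_{A}z)-2\omega(x,z\circ_{A}y)=-2\omega(x,y\circ_{A}z+z\circ_{A}y)$, which yields \eqref{eq:cor4}. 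Feeding the same identities into \eqref{eq:LV2}, the $\omega(x,y\circ_{A}z)$- and $\omega(x,z\circ_{A}y)$-contributions cancel while the $\omega(y,x\circ_{A}z)$-contributions add to $2\omega(y,x\circ_{A}z)$, which yields \eqref{eq:cor3}. Nondegeneracy of $\omega$ then promotes these equalities of pairings to the asserted identities for $x\succ_{A}y$ and $x\prec_{A}y$ themselves.

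This is entirely a matter of sign bookkeeping; I do not expect a substantive obstacle. The only point worth double-checking is the consistency relation $\succ_{A}+\prec_{A}=\circ_{A}$ (recorded after Definition \ref{defi:232}): adding \eqref{eq:cor3} and \eqref{eq:cor4} recovers exactly the reformulated $2$-cocycle identity displayed above, which serves both as a sanity check and as confirmation that the labelling of the two Levi-Civita products as $\succ_{A}$ and $\prec_{A}$ is the intended one.
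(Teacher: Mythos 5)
Your proposal is correct and takes essentially the same route as the paper's proof: substitute the skew-symmetrized form $\omega(x\circ_{A}y,z)=\omega(y,x\circ_{A}z)-\omega(x,y\circ_{A}z+z\circ_{A}y)$ of the $2$-cocycle condition into the defining equations \eqref{eq:LV1}--\eqref{eq:LV2}, cancel and combine terms by skew-symmetry, and invoke nondegeneracy of $\omega$. Your identification of $\succ_{A}$ with the product governed by \eqref{eq:LV2} and $\prec_{A}$ with that governed by \eqref{eq:LV1}, as well as the sanity check $\succ_{A}+\prec_{A}=\circ_{A}$, agree with the paper.
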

\begin{proof}
	By Definition \ref{defi:232}, the Levi-Civita products $\succ_{A},\prec_{A}:A\otimes A\rightarrow A$  are given by
	\begin{align*}
		2\omega(x\succ_{A}y,z)&=\omega(x\circ_{A}y,z)
		-\omega(y\circ_{A}z,x)-\omega(z\circ_{A}y,x)-\omega(x\circ_{A}z,y)\\
		&\overset{\eqref{eq:2-cocyle}}{=}\omega(y,x\circ_{A}z)-\omega(x,y\circ_{A}z+z\circ_{A}y)
		-\omega(y\circ_{A}z,x)-\omega(z\circ_{A}y,x)-\omega(x\circ_{A}z,y)\\
		&=2\omega(y,x\circ_{A}z),\\
		2\omega(x\prec_{A}y,z)&=\omega(x\circ_{A}y,z)
		+\omega(y\circ_{A}z,x)+\omega(z\circ_{A}y,x)+\omega(x\circ_{A}z,y)\\
		&\overset{\eqref{eq:2-cocyle}}{=}\omega(y,x\circ_{A}z)-\omega(x,y\circ_{A}z+z\circ_{A}y)
		+\omega(y\circ_{A}z,x)+\omega(z\circ_{A}y,x)+\omega(x\circ_{A}z,y)\\
		&=-2\omega(x,y\circ_{A}z+z\circ_{A}y),
	\end{align*}
for all $x,y,z\in A$. Hence the conclusion follows.
\end{proof}

Let $V$ be a vector space. Then the isomorphism ${\rm
Hom}_{\mathbb K}(V\otimes V,\mathbb K)\cong {\rm Hom}_{\mathbb
K}(V, V^*)$ identifies a bilinear form  $\omega:V\otimes
V\rightarrow \mathbb K$ on V as a linear map $\omega
^\natural:V\rightarrow V^*$ by
$$\omega(u,v)=\langle \omega^\natural (u),
v\rangle,\;\;\forall u,v\in V.$$ Moreover, $\omega$ is
nondegenerate if and only if $\omega^\natural$ is invertible.

\begin{thm}\label{thm:420}
Let $\omega$ be a nondegenerate skew-symmetric $2$-cocycle on a Leibniz algebra $(A,\circ_{A})$.
Then $(A,\succ_{A},\prec_{A})$ with multiplications $\succ_{A},\prec_{A}:A\otimes A\rightarrow A$ given by \eqref{eq:cor3} and  \eqref{eq:cor4} is an anti-pre-Leibniz algebra.
Moreover, $(-\mathcal{L}_{\succ_{A}},-\mathcal{R}_{\prec_{A}},$
$A)$ and $(\mathcal{L}^{*}_{\circ_{A}},-\mathcal{L}^{*}_{\circ_{A}}-\mathcal{R}^{*}_{\circ_{A}},A^{*})$ are equivalent as representations
of $(A,\circ_{A})$, or equivalently,    $(\mathcal{L}_{\circ_{A}},\mathcal{R}_{\circ_{A}},A)$ and  $(-\mathcal{L}^{*}_{\succ_{A}},$
$\mathcal{L}^{*}_{\succ_{A}}+\mathcal{R}^{*}_{\prec_{A}},A^{*})$ are equivalent as representations of $(A,\circ_{A})$.
\end{thm}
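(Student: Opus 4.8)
The plan is to put the whole statement on the shoulders of the linear isomorphism $\omega^{\natural}\colon A\to A^{*}$ attached to the nondegenerate form $\omega$, together with Proposition \ref{pro:defi} and Lemma \ref{ex:rep Leibniz}, so that no anti-pre-Leibniz axiom needs to be checked by hand. The first thing to do is to verify the compatibility $x\succ_{A}y+x\prec_{A}y=x\circ_{A}y$ demanded by \eqref{eq:split}, since this is not automatic: pairing the left-hand side with an arbitrary $z$ and using \eqref{eq:cor3}--\eqref{eq:cor4} gives $\omega(y,x\circ_{A}z)-\omega(x,y\circ_{A}z+z\circ_{A}y)$, which by the $2$-cocycle identity \eqref{eq:2-cocyle} together with the skew-symmetry of $\omega$ is exactly $\omega(x\circ_{A}y,z)$; nondegeneracy then forces $\succ_{A}+\prec_{A}=\circ_{A}$, so that the $\circ_{A}$ produced by the anti-pre-Leibniz structure coincides with the given Leibniz multiplication.

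Next I would read off the two intertwining relations directly from \eqref{eq:cor3}--\eqref{eq:cor4}. For all $x,y,z\in A$, $\langle\omega^{\natural}(x\succ_{A}y),z\rangle=\omega(y,x\circ_{A}z)=\langle\omega^{\natural}(y),\mathcal{L}_{\circ_{A}}(x)z\rangle=-\langle\mathcal{L}^{*}_{\circ_{A}}(x)\omega^{\natural}(y),z\rangle$, hence $\omega^{\natural}(-\mathcal{L}_{\succ_{A}}(x))=\mathcal{L}^{*}_{\circ_{A}}(x)\omega^{\natural}$; similarly, writing \eqref{eq:cor4} as $\omega(x\prec_{A}y,z)=-\langle\omega^{\natural}(x),\mathcal{L}_{\circ_{A}}(y)z\rangle-\langle\omega^{\natural}(x),\mathcal{R}_{\circ_{A}}(y)z\rangle$ gives $\omega^{\natural}(-\mathcal{R}_{\prec_{A}}(x))=(-\mathcal{L}^{*}_{\circ_{A}}(x)-\mathcal{R}^{*}_{\circ_{A}}(x))\omega^{\natural}$. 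These are precisely the equations \eqref{eq:eq Leibniz rep} saying that $\omega^{\natural}$ is an equivalence from the triple $(-\mathcal{L}_{\succ_{A}},-\mathcal{R}_{\prec_{A}},A)$ to $(\mathcal{L}^{*}_{\circ_{A}},-\mathcal{L}^{*}_{\circ_{A}}-\mathcal{R}^{*}_{\circ_{A}},A^{*})$. Since the latter is a representation of $(A,\circ_{A})$ by Lemma \ref{ex:rep Leibniz} (applied to the adjoint representation) and $\omega^{\natural}$ is invertible, the axioms \eqref{eq:rep1}--\eqref{eq:rep3} transfer, so $(-\mathcal{L}_{\succ_{A}},-\mathcal{R}_{\prec_{A}},A)$ is a representation of $(A,\circ_{A})$; Proposition \ref{pro:defi} ((\ref{it:3})$\Leftrightarrow$(\ref{it:1})) then gives that $(A,\succ_{A},\prec_{A})$ is an anti-pre-Leibniz algebra, and the equivalence with $(\mathcal{L}^{*}_{\circ_{A}},-\mathcal{L}^{*}_{\circ_{A}}-\mathcal{R}^{*}_{\circ_{A}},A^{*})$ is the one just exhibited.

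For the "equivalently" reformulation I would dualize the equivalence already obtained. A short computation shows that whenever $\phi$ realizes an equivalence $(l_{\circ_{A}},r_{\circ_{A}},V)\sim(l'_{\circ_{A}},r'_{\circ_{A}},V')$, the map $(\phi^{-1})^{*}$ realizes an equivalence of the associated dual representations of Lemma \ref{ex:rep Leibniz}, namely $(l^{*}_{\circ_{A}},-l^{*}_{\circ_{A}}-r^{*}_{\circ_{A}},V^{*})\sim(l'^{*}_{\circ_{A}},-l'^{*}_{\circ_{A}}-r'^{*}_{\circ_{A}},V'^{*})$. Applying this to $\omega^{\natural}$ and using the canonical finite-dimensional identification $A^{**}\cong A$ (under which $f^{**}=f$ and $(-f)^{*}=-f^{*}$), the dual of $(-\mathcal{L}_{\succ_{A}},-\mathcal{R}_{\prec_{A}},A)$ is $(-\mathcal{L}^{*}_{\succ_{A}},\mathcal{L}^{*}_{\succ_{A}}+\mathcal{R}^{*}_{\prec_{A}},A^{*})$, while the dual of $(\mathcal{L}^{*}_{\circ_{A}},-\mathcal{L}^{*}_{\circ_{A}}-\mathcal{R}^{*}_{\circ_{A}},A^{*})$ reduces to $(\mathcal{L}_{\circ_{A}},\mathcal{R}_{\circ_{A}},A)$, the adjoint representation; this yields the claimed equivalence. (Equivalently, one may simply invoke that the two statements are interchanged by the duality of Lemma \ref{ex:rep Leibniz}, together with Proposition \ref{pro:defi}(\ref{it:4}).)

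I do not expect a genuine obstacle here. The only points needing care are the sign conventions in the definition of $f^{*}$, so that the two intertwining identities come out with exactly the signs in the statement, and the already-noted fact that $\succ_{A}+\prec_{A}=\circ_{A}$ uses the $2$-cocycle condition rather than being formal. The alternative of verifying \eqref{eq:anLei1}--\eqref{eq:anLei4} directly from \eqref{eq:cor3}--\eqref{eq:cor4} is possible but considerably messier, which is why I would route everything through $\omega^{\natural}$ and Proposition \ref{pro:defi}.
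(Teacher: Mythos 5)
Your proof is correct, but it is organized differently from the paper's. The paper first shows that $\omega^{\natural^{-1}}$ is an invertible anti-$\mathcal{O}$-operator of $(A,\circ_{A})$ associated to $(\mathcal{L}^{*}_{\circ_{A}},-\mathcal{L}^{*}_{\circ_{A}}-\mathcal{R}^{*}_{\circ_{A}},A^{*})$ --- a computation that encodes exactly your $2$-cocycle manipulation --- and then invokes Theorem \ref{thm:2} (hence Propositions \ref{pro:1} and \ref{pro:strong}) to produce the compatible anti-pre-Leibniz structure, checking afterwards that the products \eqref{eq:pro:2.14} coincide with \eqref{eq:cor3}--\eqref{eq:cor4}; the intertwining identities only enter in the proof of the ``moreover'' clause. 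You instead bypass the anti-$\mathcal{O}$-operator machinery: you verify $\succ_{A}+\prec_{A}=\circ_{A}$ directly from \eqref{eq:2-cocyle} (correctly flagged as the only place the cocycle condition is genuinely used), establish the two intertwining identities from \eqref{eq:cor3}--\eqref{eq:cor4}, transfer the axioms \eqref{eq:rep1}--\eqref{eq:rep3} across the invertible map $\omega^{\natural}$ so that $(-\mathcal{L}_{\succ_{A}},-\mathcal{R}_{\prec_{A}},A)$ becomes a representation of $(A,\circ_{A})$, and conclude via Proposition \ref{pro:defi} ((\ref{it:3})$\Rightarrow$(\ref{it:1})). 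This lets the equivalence computation do double duty --- the same identities prove both the anti-pre-Leibniz structure and the ``moreover'' statement --- which is slightly more economical; what you give up is the explicit recognition of $\omega^{\natural^{-1}}$ as an invertible anti-$\mathcal{O}$-operator, a fact the paper's formulation makes available for reuse (e.g.\ in the converse direction of Proposition \ref{pro:antipreLeibniz}). Your handling of the second equivalence by formally dualizing the first (via $(\phi^{-1})^{*}$ and the identification $A^{**}\cong A$) is sound and replaces the paper's direct derivation of \eqref{eq:bf3} and \eqref{eq:bf qua}; the only care needed is the sign convention $\langle f^{*}(x)u^{*},v\rangle=-\langle u^{*},f(x)v\rangle$, under which the dual of $(-\mathcal{L}_{\succ_{A}},-\mathcal{R}_{\prec_{A}},A)$ is indeed $(-\mathcal{L}^{*}_{\succ_{A}},\mathcal{L}^{*}_{\succ_{A}}+\mathcal{R}^{*}_{\prec_{A}},A^{*})$ and the dual of $(\mathcal{L}^{*}_{\circ_{A}},-\mathcal{L}^{*}_{\circ_{A}}-\mathcal{R}^{*}_{\circ_{A}},A^{*})$ is the adjoint representation, so your map is simply the inverse of the paper's equivalence $(\omega^{\natural})^{*}$.
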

\begin{proof}
	Let $x,y,z\in A$ and $a^{*}=\omega^{\natural}(x), b^{*}=\omega^{\natural}(y)$. Then we have
\begin{align*}
&\langle \omega^{\natural} \big(\omega^{\natural^{-1}}(a^{*})\circ_{A} \omega^{\natural^{-1}}(b^{*})\big),z\rangle=\langle \omega^{\natural}(x\circ_{A}y),z\rangle=\omega(x\circ_{A}y,z)\\	&=\omega(y,x\circ_{A}z)-\omega(x,y\circ_{A}z+z\circ_{A}y)
=\langle(\mathcal{L}^{*}_{\circ_{A}}+\mathcal{R}^{*}_{\circ_{A}})
\big(\omega^{\natural^{-1}}(b^{*})\big)a^{*},z\rangle-\langle \mathcal{L}^{*}_{\circ_{A}}\big(\omega^{\natural^{-1}}(a^{*})\big)b^{*},z\rangle.
\end{align*}
Hence we have
\begin{align*}
\omega^{\natural^{-1}}(a^{*})\circ_{A} \omega^{\natural^{-1}}(b^{*})=\omega^{\natural^{-1}} \Big((\mathcal{L}^{*}_{\circ_{A}}+\mathcal{R}^{*}_{\circ_{A}})
\big(\omega^{\natural^{-1}}(b^{*})\big)a^{*},z\rangle-\langle \mathcal{L}^{*}_{\circ_{A}}\big(\omega^{\natural^{-1}}(a^{*})\big)b^{*}\Big).
\end{align*}
That is, $\omega^{\natural^{-1}}$ is an invertible
anti-$\mathcal{O}$-operator of $(A,\circ_{A})$ associated to the triple $(\mathcal{L}^{*}_{\circ_{A}},-\mathcal{L}^{*}_{\circ_{A}}-\mathcal{R}^{*}_{\circ_{A}},A^{*})$.
	Hence by Theorem \ref{thm:2}, there are multiplications $\succ_{A},\prec_{A}$ given by \eqref{eq:pro:2.14} for
\begin{equation*}
T=\omega^{\natural^{-1}},\; (l_{\circ_{A}},r_{\circ_{A}},V)=(\mathcal{L}^{*}_{A},-\mathcal{L}^{*}_{\circ_{A}}-\mathcal{R}^{*}_{\circ_{A}},A^{*})
\end{equation*}
such that $(A,\succ_{A},\prec_{A})$ is an anti-pre-Leibniz algebra.
	Explicitly, we have
\begin{eqnarray*} x\succ_{A}y=-\omega^{\natural^{-1}}\big(\mathcal{L}^{*}_{\circ_{A}}(x)\mathcal{B}^{\natural}(y) \big),\;\; x\prec_{A}y=\omega^{\natural^{-1}}\big((\mathcal{L}^{*}_{\circ_{A}}+\mathcal{R}^{*}_{\circ_{A}}) (y)\mathcal{B}^{\natural}(x) \big),
\end{eqnarray*}
and in other words
\begin{align*}
&\omega(x\succ_{A}y,z)=-\langle \mathcal{L}^{*}_{\circ_{A}}(x)\omega^{\natural}(y), z\rangle=\langle \omega^{\natural}(y),x\circ_{A}z\rangle=\omega(y,x\circ_{A}z),\\	&\omega(x\prec_{A}y,z)=\langle(\mathcal{L}^{*}_{\circ_{A}}+\mathcal{R}^{*}_{\circ_{A}}) (y)\omega^{\natural}(x) ,z\rangle=-\langle \omega^{\natural}(x),y\circ_{A}z+z\circ_{A}y\rangle
=-\omega(x,y\circ_{A}z+z\circ_{A}y).
\end{align*}
Hence the first part of the theorem holds. To prove the second half part,
 we have
\begin{align*}
&\langle \omega^{\natural}\big(-\mathcal{L}_{\succ_{A}}(x)y\big),z\rangle=-\omega(x\succ_{A}y ,z)=-\omega(y,x\circ_{A}z)=-\langle \omega^{\natural}(y),x\circ_{A}z\rangle=\langle \mathcal{L}^{*}_{\circ_{A}}(x)\omega^{\natural}(y),z\rangle,\\
&\langle \omega^{\natural}\big(-\mathcal{R}_{\prec_{A}}(y)x\big),z\rangle
=-\omega(x\prec_{A}y,z)=\omega(x,y\circ_{A}z+z\circ_{A}y)=
\langle (-\mathcal{L}^{*}_{\circ_{A}}-\mathcal{R}^{*}_{\circ_{A}})(x)\omega^{\natural}(y),z\rangle.
\end{align*}
Thus we have
\begin{align*}
\omega^{\natural}\big(-\mathcal{L}_{\succ_{A}}(x)y\big)=\mathcal{L}^{*}_{\circ_{A}}(x)\omega^{\natural}(y),\;
\omega^{\natural}\big(-\mathcal{R}_{\prec_{A}}(y)x\big)=(-\mathcal{L}^{*}_{\circ_{A}}-\mathcal{R}^{*}_{\circ_{A}})(x)\omega^{\natural}(y).
\end{align*}
Therefore, the bijection $\omega^{\natural}:A\rightarrow A^{*}$ gives the equivalence between  $(-\mathcal{L}_{\succ_{A}},-\mathcal{R}_{\prec_{A}},A)$ and $(\mathcal{L}^{*}_{\circ_{A}},$
$-\mathcal{L}^{*}_{\circ_{A}}-\mathcal{R}^{*}_{\circ_{A}},A^{*})$
as representations of $(A,\circ_{A})$. Again by \eqref{eq:cor3} and \eqref{eq:cor4}, we have
\begin{eqnarray}\label{eq:bf3}
\omega(x\succ_{A}y+y\prec_{A}x,z)=-\omega(y,z\circ_{A}x).
\end{eqnarray}
Similarly by \eqref{eq:cor3} and \eqref{eq:bf3}, we have
\begin{equation}\label{eq:bf qua}
(\omega^{\natural})^{*}\big(\mathcal{L}_{\circ_{A}}(x)z\big)=-\mathcal{L}^{*}_{\succ_{A}}(x)(\omega^{\natural})^{*}(z),\;
(\omega^{\natural})^{*}\big(\mathcal{R}_{\circ_{A}}(x)z\big)=(\mathcal{L}^{*}_{\succ_{A}}+\mathcal{R}^{*}_{\prec_{A}})(x)(\omega^{\natural})^{*}(z).
\end{equation}
Therefore, the bijection $(\omega^{\natural})^{*}:A\rightarrow A^{*}$ gives the equivalence between $(\mathcal{L}_{\circ_{A}}, \mathcal{R}_{\circ_{A}},A)$ and $(-\mathcal{L}^{*}_{\succ_{A}},$
$\mathcal{L}^{*}_{\succ_{A}}+\mathcal{R}^{*}_{\prec_{A}},A^{*})$ as representations of $(A,\circ_{A})$.
\end{proof}

\delete{
The converse of Theorem \ref{thm:420} cannot hold, and thus we reach a weaker conclusion as follows.

\begin{cor}
Let $(A,\succ_{A},\prec_{A})$ be an anti-pre-Leibniz algebra and $(A,\circ_{A})$ be the sub-adjacent Leibniz algebra. If $(\mathcal{L} _{\circ_{A}},\mathcal{R} _{\circ_{A}},A)$ and $(-\mathcal{L}^{*}_{\succ_{A}},\mathcal{L}^{*}_{\succ_{A}}+\mathcal{R}^{*}_{\prec_{A}},A^{*})$
are equivalent as representations of $(A,\circ_{A})$, then there exists a nondegenerate bilinear form $\omega$ on $(A,\circ_{A})$ satisfying 
\begin{equation}\label{eq:various}
\omega(x\circ_{A}y,z)=\omega(y,x\circ_{A}z)-\omega(x,y\circ_{A}z
+z\circ_{A}y),\;\forall x,y,z\in A.
\end{equation}
\end{cor}

\begin{proof}
Let $\phi:A\rightarrow A^{*}$ be a bijection which gives the equivalence between $(\mathcal{L} _{\circ_{A}},\mathcal{R} _{\circ_{A}},A)$ and $(-\mathcal{L}^{*}_{\succ_{A}},\mathcal{L}^{*}_{\succ_{A}}+\mathcal{R}^{*}_{\prec_{A}},A^{*})$ as representations of $(A,\circ_{A})$.
Then there is a nondegenerate bilinear form $\omega$ on $A$ given by
\begin{equation}\label{eq:phi2}
\omega(x,y)=\langle x,\phi(y)\rangle,\;\forall x,y\in A,
\end{equation}
that is, we have $\phi=(\omega^{\natural})^{*}$, and hence \eqref{eq:bf qua} holds. Moreover, \eqref{eq:bf qua} equivalently gives rise to \eqref{eq:cor3} and \eqref{eq:bf3}, which render \eqref{eq:cor4}. Again by \eqref{eq:cor3} and \eqref{eq:cor4}, we obtain \eqref{eq:various}. This completes the proof.
\end{proof}
}

\begin{pro}\label{pro:antipreLeibniz}
Let $(A,\succ_{A},\prec_{A})$ be an anti-pre-Leibniz algebra and $(A,\circ_{A})$ be the sub-adjacent Leibniz algebra.
Then there is a Leibniz algebra structure on $A\oplus A^{*}$ given by
\begin{equation}\label{eq:left inv2}	(x+a^{*})\circ_{d}(y+b^{*})=x\circ_{A}y-\mathcal{L}^{*}_{\succ_{A}}(x)b^{*}
+(\mathcal{L}^{*}_{\succ_{A}}
+\mathcal{R}^{*}_{\prec_{A}})(y)a^{*},\;\forall x,y\in A,a^{*},b^{*}\in A^{*}.
\end{equation}
Moreover, the natural nondegenerate skew-symmetric bilinear form $\omega_{p}$ on $A\oplus A^{*}$ given by
\begin{equation}\label{eq:antibf}
	\omega_p(x+a^{*},y+b^{*})=\langle x,b^{*}\rangle-\langle
	a^{*},y\rangle,\;\;\forall x,y\in A, a^{*},b^{*}\in A^{*}
\end{equation}
is a nondegenerate skew-symmetric $2$-cocycle on
$A\ltimes_{-\mathcal{L}^{*}_{\succ_{A}},\mathcal{L}^{*}_{\succ_{A}}
+\mathcal{R}^{*}_{\prec_{A}}}A^*$.

Conversely, let $(l,r,A^*)$ be a representation of a Leibniz algebra $(A,\circ_{A})$. Suppose that the
bilinear form $\omega_p$ given by \eqref{eq:antibf} is a nondegenerate skew-symmetric $2$-cocycle on
$A\ltimes_{l,r}A^*$. Then there is a compatible anti-pre-Leibniz
algebra $(A,\succ_{A},\prec_{A})$ of
$(A,\circ_{A})$ such that $l=-\mathcal{L}^{*}_{\succ_{A}},
r=\mathcal{L}^{*}_{\succ_{A}}+\mathcal{R}^{*}_{\prec_{A}}$.
\end{pro}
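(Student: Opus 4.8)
The plan is to treat the two directions separately. For the forward direction, I would first observe that by Proposition \ref{pro:defi} (\ref{it:4}) the triple $(-\mathcal{L}^{*}_{\succ_{A}},\mathcal{L}^{*}_{\succ_{A}}+\mathcal{R}^{*}_{\prec_{A}},A^{*})$ is a representation of $(A,\circ_{A})$, so Proposition \ref{pro:327} immediately produces the Leibniz algebra structure $\circ_{d}$ of \eqref{eq:left inv2} on $A\oplus A^{*}$, namely the semidirect product $A\ltimes_{-\mathcal{L}^{*}_{\succ_{A}},\mathcal{L}^{*}_{\succ_{A}}+\mathcal{R}^{*}_{\prec_{A}}}A^{*}$. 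Since the bilinear form $\omega_{p}$ of \eqref{eq:antibf} is visibly nondegenerate and skew-symmetric, all that remains is to verify the $2$-cocycle identity \eqref{eq:2-cocyle} for $\omega_{p}$ on this Leibniz algebra.

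To do so, I would take general elements $x+a^{*},y+b^{*},z+c^{*}\in A\oplus A^{*}$ and expand the three terms $\omega_{p}(z+c^{*},(x+a^{*})\circ_{d}(y+b^{*}))$, $\omega_{p}(x+a^{*},(y+b^{*})\circ_{d}(z+c^{*})+(z+c^{*})\circ_{d}(y+b^{*}))$ and $\omega_{p}(y+b^{*},(x+a^{*})\circ_{d}(z+c^{*}))$ using \eqref{eq:left inv2}, \eqref{eq:antibf} and the defining relation $\langle f^{*}(x)u^{*},v\rangle=-\langle u^{*},f(x)v\rangle$ of the dual maps, and then compare the coefficients of $a^{*}$, $b^{*}$ and $c^{*}$ separately. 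I expect that, after substituting $\circ_{A}=\succ_{A}+\prec_{A}$ from \eqref{eq:split}, each of the three resulting scalar identities reduces to an evident identity; note that the check uses only that $(A,\succ_{A},\prec_{A})$ is Leibniz-admissible, the remaining anti-pre-Leibniz axioms \eqref{eq:anLei1}--\eqref{eq:anLei4} being already absorbed into the fact that $\circ_{d}$ is a Leibniz product. The main obstacle is exactly this bookkeeping: keeping the numerous pairing terms and the signs coming from dualization under control, and collecting them correctly into the three groups indexed by the dual-space variables.

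For the converse, let $(l,r,A^{*})$ be a representation of $(A,\circ_{A})$ such that $\omega_{p}$ of \eqref{eq:antibf} is a nondegenerate skew-symmetric $2$-cocycle on $A\ltimes_{l,r}A^{*}$. Since $A^{*}$ separates the points of $A$, I can define bilinear maps $\succ_{A},\prec_{A}:A\otimes A\to A$ by
\begin{equation*}
\langle b^{*},x\succ_{A}z\rangle=\langle l(x)b^{*},z\rangle,\qquad \langle a^{*},x\prec_{A}y\rangle=-\langle (l(y)+r(y))a^{*},x\rangle,\qquad \forall\, x,y,z\in A,\; a^{*},b^{*}\in A^{*},
\end{equation*}
and unwinding the definition of the dual map shows that these choices are precisely $l=-\mathcal{L}^{*}_{\succ_{A}}$ and $r=\mathcal{L}^{*}_{\succ_{A}}+\mathcal{R}^{*}_{\prec_{A}}$. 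Next I would specialize \eqref{eq:2-cocyle} to the triple $(x,y,c^{*})$ with $x,y\in A$, $c^{*}\in A^{*}$: since $x\circ_{d}y=x\circ_{A}y$, $y\circ_{d}c^{*}+c^{*}\circ_{d}y=(l(y)+r(y))c^{*}$ and $x\circ_{d}c^{*}=l(x)c^{*}$, the identity becomes $-\langle c^{*},x\circ_{A}y\rangle=\langle x,(l(y)+r(y))c^{*}\rangle-\langle y,l(x)c^{*}\rangle$, which by the two relations above rewrites as $\langle c^{*},x\circ_{A}y\rangle=\langle c^{*},x\succ_{A}y+x\prec_{A}y\rangle$ for every $c^{*}$, i.e. $\circ_{A}=\succ_{A}+\prec_{A}$. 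Hence $(A,\circ_{A})$ is a Leibniz algebra with $\circ_{A}=\succ_{A}+\prec_{A}$ admitting the representation $(-\mathcal{L}^{*}_{\succ_{A}},\mathcal{L}^{*}_{\succ_{A}}+\mathcal{R}^{*}_{\prec_{A}},A^{*})=(l,r,A^{*})$, and Proposition \ref{pro:defi} (\ref{it:4}) then yields that $(A,\succ_{A},\prec_{A})$ is an anti-pre-Leibniz algebra, necessarily a compatible one of $(A,\circ_{A})$, with $l,r$ of the required form.
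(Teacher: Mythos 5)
Your forward direction coincides with the paper's: Proposition \ref{pro:defi} (4) together with Proposition \ref{pro:327} gives the semidirect product Leibniz structure, and the $2$-cocycle identity for $\omega_p$ is checked by expanding the three terms of \eqref{eq:2-cocyle} and comparing the coefficients of $a^*,b^*,c^*$; your remark that this comparison uses only the splitting $\circ_A=\succ_A+\prec_A$ is correct (for instance the $b^*$-coefficients are $x\succ_A z$ on one side and $x\circ_A z-x\prec_A z$ on the other), so the computation you defer does go through exactly as in the paper. Your converse, however, takes a genuinely different route. The paper applies Theorem \ref{thm:420} to the nondegenerate skew-symmetric $2$-cocycle $\omega_p$ to obtain a compatible anti-pre-Leibniz structure $(\succ_d,\prec_d)$ on the whole double space $A\oplus A^*$, shows that $A$ is closed under $\succ_d,\prec_d$ (because $\omega_p$ vanishes on $A\times A$), restricts to $A$, and only then identifies $l$ and $r$ via $\omega_p$. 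You instead define $\succ_A,\prec_A$ directly by dualizing $l$ and $l+r$ (legitimate since $A$ is finite-dimensional, so linear functionals on $A^*$ are evaluations at elements of $A$), note that these formulas are exactly the statements $l=-\mathcal{L}^*_{\succ_A}$ and $r=\mathcal{L}^*_{\succ_A}+\mathcal{R}^*_{\prec_A}$, extract the splitting $\circ_A=\succ_A+\prec_A$ from the single instance of \eqref{eq:2-cocyle} with entries $(x,y,c^*)$, and conclude by Proposition \ref{pro:defi} (4), the representation hypothesis on $(l,r,A^*)$ being part of the assumptions. This is correct and more economical: it bypasses Theorem \ref{thm:420} and the closedness-and-restriction argument, and it shows in passing that only the $(x,y,c^*)$-instances of the cocycle condition are actually needed for this direction; what the paper's route buys in exchange is the explicit Levi-Civita (anti-pre-Leibniz) structure on all of $A\oplus A^*$, of which the compatible structure on $A$ appears as a subalgebra.
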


\begin{proof}
By Proposition \ref{pro:defi}, $(-\mathcal{L}^{*}_{\succ_{A}},\mathcal{L}^{*}_{\succ_{A}}+\mathcal{R}^{*}_{\prec_{A}},A^{*})$ is a representation of $(A,\circ_{A})$. Then by Proposition \ref{pro:327},
$A\ltimes_{-\mathcal{L}^{*}_{\succ_{A}},
\mathcal{L}^{*}_{\succ_{A}}+\mathcal{R}^{*}_{\prec_{A}}} A^{*}$ defined by \eqref{eq:left inv2} is a Leibniz algebra. For all $x,y,z\in A,a^{*},b^{*},c^{*}\in A^{*}$, we have
\begin{align*}
\omega_{p}\big(z+c^{*},(x+a^{*})\circ_{d}(y+b^{*})\big)&=
\omega_{p}\big(z+c^{*},x\circ_{A}y-\mathcal{L}^{*}_{\succ_{A}}(x)b^{*}
+(\mathcal{L}^{*}_{\succ_{A}}+\mathcal{R}^{*}_{\prec_{A}})(y)a^{*}\big)\\
&=\langle b^{*},x\succ_{A}z\rangle
-\langle a^{*},y\succ_{A}z+z\prec_{A}y\rangle-\langle c^{*},x\circ_{A}y\rangle.
\end{align*}Similarly, we have
\begin{eqnarray*}
&&-\omega_{p}\big(y+b^{*},(x+a^{*})\circ_{d}(z+c^{*})\big)\\
&=&\langle b^{*},x\circ_{A}z\rangle
-\langle c^{*},x\succ_{A}y\rangle
+\langle a^{*},z\succ_{A}y+y\prec_{A}z\rangle,\\
&&\omega_{p}\big(x+a^{*},(y+b^{*})\circ_{d}(z+c^{*})
+(z+c^{*})\circ_{d}(y+b^{*})\big)\\
&=&-\langle c^{*},x\prec_{A}y\rangle-\langle b^{*},x\prec_{A}z\rangle-\langle a^{*},y\circ_{A}z+z\circ_{A}y\rangle.
\end{eqnarray*}
Thus $\omega_{p}$ is a nondegenerate skew-symmetric $2$-cocycle on $A\ltimes_{-\mathcal{L}^{*}_{\succ_{A}},\mathcal{L}^{*}_{\succ_{A}}+\mathcal{R}^{*}_{\prec_{A}}} A^{*}$.
\delete{
It is straightforward to show that $\omega_p$ is a
nondegenerate skew-symmetric $2$-cocycle on
$A\ltimes_{-\mathcal{L}^{*}_{\succ_{A}},
\mathcal{L}^{*}_{\succ_{A}}+\mathcal{R}^{*}_{\prec_{A}}}A^*$.}

Conversely, by Theorem \ref{thm:420}, there is a
compatible anti-pre-Leibniz algebra $(A\oplus A^*,\succ_{d},$
$\prec_{d})$ with $\succ_{d},\prec_{d}$ given by \eqref{eq:cor3} and \eqref{eq:cor4} respectively.
In particular, we have
\begin{equation*}
\omega(x\succ_{d}y,z)=\omega(y,x\circ_{A}z)=0,\;
\omega(x\prec_{d}y,z)=-\omega(x,y\circ_{A}z+z\circ_{A}y)=0,\;\forall x,y,z\in A.
\end{equation*}
Thus $x\succ_{d}y,x\prec_{d}y\in A$ for all
$x,y\in A$ and hence $(A,\succ_{A}=\succ_{d}|_{A},\prec_{A}=\prec_{d}|_{A})$ is
an anti-pre-Leibniz algebra. Furthermore, for all $x,y\in A, a^*\in A^*,$ we have
\begin{align*}
&\langle -\mathcal{L}^{*}_{\succ_{A}}(x)a^*, y\rangle
=\langle a^*, x\succ_{A}y\rangle
=-\omega_{p}(a^{*},x\succ_{A}y)
=\omega_p(y,x\circ_{d}a^{*})
=\omega_p\big(y,l(x)a^*\big)=\langle l(x) a^*, y\rangle,\\
&\langle (\mathcal{L}^{*}_{\succ_{A}}+\mathcal{R}^{*}_{\prec_{A}})(x)a^*, y\rangle
=-\langle a^*, x\succ_{A}y+y\prec_{A}x\rangle
=\omega_{p}( a^*, x\succ_{A}y+y\prec_{A}x )\\
&
=-\omega_p( y,a^{*}\circ_{d}x)
=-\omega_p\big(y, r(x) a^*\big)
=\langle r(x) a^*,y\rangle.
\end{align*}
So $l=-\mathcal{L}^{*}_{\succ_{A}},
r=\mathcal{L}^{*}_{\succ_{A}}+\mathcal{R}^{*}_{\prec_{A}}$.
Hence the conclusion follows.
\end{proof}

\delete{
\begin{defi}\cite{TXS}
A \textbf{pseudo-Riemannian Leibniz algebra} is a Leibniz algebra $(A,\circ_{A})$ endowed with a nondegenerate skew-symmetric bilinear form $\omega$. The associated \textbf{Levi-Civita products} $\lozenge_{A},\blacklozenge_{A}:A\otimes A\rightarrow A$ are defined by the following equations:
\begin{eqnarray}
&&2\omega(x\lozenge_{A}y,z)=\omega(x\circ_{A}y,z)
+\omega(y\circ_{A}z,x)+\omega(z\circ_{A}y,x)+\omega(x\circ_{A}z,y),\label{eq:LV1}\\
&&2\omega(x\blacklozenge_{A}y,z)=\omega(x\circ_{A}y,z)
-\omega(y\circ_{A}z,x)-\omega(z\circ_{A}y,x)-\omega(x\circ_{A}z,y),\;\forall x,y,z\in A.\label{eq:LV2}
\end{eqnarray}
\end{defi}}

\begin{defi}\label{defi:anLei rep}
Let $(A,\succ_{A},\prec_{A})$ be an anti-pre-Leibniz algebra and $(A,\circ_{A})$ be the sub-adjacent Leibniz algebra. Let $V$ be a vector space and
$l_{\succ_{A}},r_{\succ_{A}},l_{\prec_{A}},r_{\prec_{A}}:A\rightarrow\mathrm{End}_{\mathbb
K}(V)$ be linear maps. Set
\begin{equation}\label{eq:sum linear}
l_{\circ_{A}}=l_{\succ_{A}}+l_{\prec_{A}},\;\;
r_{\circ_{A}}=r_{\succ_{A}}+r_{\prec_{A}}.
\end{equation}
If the following equations hold:
\begin{eqnarray}
&&r_{\prec_{A}}(x)r_{\circ_{A}}(y)v=r_{\succ_{A}}(y\circ_{A}x)v
-l_{\succ_{A}}(y)r_{\circ_{A}}(x)v,\label{eq:anLei rep1}\\
&&r_{\succ_{A}}(x)r_{\circ_{A}}(y)v=l_{\succ_{A}}(y)r_{\succ_{A}}(x)v
-r_{\succ_{A}}(y\succ_{A}x)v,\label{eq:anLei rep2}\\
&&r_{\prec_{A}}(x\circ_{A}y)v=r_{\prec_{A}}(y)l_{\succ_{A}}(x)v
-l_{\succ_{A}}(x)r_{\prec_{A}}(y)v,\label{eq:anLei rep3}\\
&&r_{\prec_{A}}(x)r_{\succ_{A}}(y)v=-r_{\prec_{A}}(x)l_{\prec_{A}}(y)v,
\label{eq:anLei rep4}\\
&&r_{\succ_{A}}(x)l_{\circ_{A}}(y)v=r_{\succ_{A}}(y\succ_{A}x)v
-l_{\succ_{A}}(y)r_{\succ_{A}}(x)v,\label{eq:anLei rep5}\\
&&l_{\prec_{A}}(x)r_{\circ_{A}}(y)v=r_{\prec_{A}}(y)r_{\succ_{A}}(x)v
-r_{\succ_{A}}(x\prec_{A}y)v,\label{eq:anLei rep6}\\
&&r_{\prec_{A}}(x)l_{\succ_{A}}(y)v=-r_{\prec_{A}}(x)r_{\prec_{A}}(y)v,
\label{eq:anLei rep7}\\
&&l_{\prec_{A}}(x\circ_{A}y)v=l_{\succ_{A}}(x)l_{\circ_{A}}(y)v
-l_{\succ_{A}}(y)l_{\circ_{A}}(x)v,\label{eq:anLei rep8}\\
&&l_{\succ_{A}}(x\circ_{A}y)v=l_{\succ_{A}}(y)l_{\succ_{A}}(x)v
-l_{\succ_{A}}(x)l_{\succ_{A}}(y)v,\label{eq:anLei rep9}\\
&&l_{\prec_{A}}(x)l_{\circ_{A}}(y)v=l_{\prec_{A}}(y\succ_{A}x)v
-l_{\succ_{A}}(y)l_{\prec_{A}}(x)v,\label{eq:anLei rep10}\\
&&l_{\prec_{A}}(x\succ_{A}y)v=-l_{\prec_{A}}(y\prec_{A}x)v
,\;\forall x,y\in A, v\in V,\label{eq:anLei rep11}
\end{eqnarray}
then we say $(l_{\succ_{A}},r_{\succ_{A}},l_{\prec_{A}},r_{\prec_{A}},V)$ is a {\bf representation} of $(A,\succ_{A},\prec_{A})$.
Two representations $(l_{\succ_{A}},r_{\succ_{A}},$
$l_{\prec_{A}},r_{\prec_{A}},V)$ and $(l'_{\succ_{A}},r'_{\succ_{A}},l'_{\prec_{A}},r'_{\prec_{A}},V')$ of $(A,\succ_{A},\prec_{A})$ are called
\textbf{equivalent} if there exists a linear isomorphism $\phi:V \rightarrow V'$ such that the following equations hold:
\begin{eqnarray*}
\phi l_{\succ_{A}}(x)=l'_{\succ_{A}}(x)\phi,\;\phi r_{\succ_{A}}(x)=r'_{\succ_{A}}(x)\phi,\;\phi l_{\prec_{A}} (x)=l'_{\prec_{A}}(x)\phi,\;\phi r_{\prec_{A}} (x)=r'_{\prec_{A}}(x)\phi,\;\forall x\in A.
\end{eqnarray*}
\end{defi}

 \begin{pro}
 Let $(A,\succ_{A},\prec_{A})$ be an anti-pre-Leibniz algebra, $V$ be a vector space and
 $l_{\succ_{A}},r_{\succ_{A}}$,
 $l_{\prec_{A}},r_{\prec_{A}}:A\rightarrow\mathrm{End}_{\mathbb
 	K}(V)$ be linear maps.
 Then $(l_{\succ_{A}},r_{\succ_{A}},$
 $l_{\prec_{A}},r_{\prec_{A}},V)$ is a representation of $(A,\succ_{A},\prec_{A})$ if and only if there is an anti-pre-Leibniz algebra structure on $A\oplus V$ given by
 \begin{eqnarray*}
 &&	(x+u)\succ_{d}(y+v)=x\succ_{A}y+l_{\succ_{A}}(x)v+r _{\succ_{A}}(y)u,\\
 &&(x+u)\prec_{d}(y+v)=x\prec_{A}y+l_{\prec_{A}}(x)v+r _{\prec_{A}}(y)u,\;\forall x,y\in A, u,v\in V.
 \end{eqnarray*}
Moreover, $(l_{\circ_{A}}=l_{\succ_{A}}+l_{\prec_{A}},
r_{\circ_{A}}=r_{\succ_{A}}+r_{\prec_{A}},V)$ is a representation of the sub-adjacent Leibniz algebra $(A,\circ_{A})$ of $(A,\succ_{A},\prec_{A})$.
 \end{pro}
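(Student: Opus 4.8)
The plan is to prove the equivalence by substituting generic elements into the four defining identities of an anti-pre-Leibniz algebra on $A\oplus V$ and separating components. Write elements of $A\oplus V$ as $x+u$ with $x\in A$, $u\in V$. First I would record that $\circ_{d}=\succ_{d}+\prec_{d}$ is given by $(x+u)\circ_{d}(y+v)=x\circ_{A}y+l_{\circ_{A}}(x)v+r_{\circ_{A}}(y)u$, with $l_{\circ_{A}},r_{\circ_{A}}$ as in \eqref{eq:sum linear}; thus $(A\oplus V,\circ_{d})$ has exactly the shape \eqref{eq:sd Leibniz} of a semidirect product. Saying that $(A\oplus V,\succ_{d},\prec_{d})$ is an anti-pre-Leibniz algebra amounts to checking \eqref{eq:anLei1}--\eqref{eq:anLei4} on $A\oplus V$, so I would plug $x+u$, $y+v$, $z+w$ into each of these four identities and split the resulting equality into its $A$-component and its $V$-component.

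The $A$-component of each of \eqref{eq:anLei1}--\eqref{eq:anLei4} on $A\oplus V$ is literally the same identity on $(A,\succ_{A},\prec_{A})$ (take $u=v=w=0$), hence holds by hypothesis and contributes no constraint. The $V$-component of each of the four trilinear identities, by bilinearity of all the operations, separates into the terms linear in $u$, those linear in $v$ and those linear in $w$; equating the two sides within each of these three groups yields an identity in $\mathrm{End}(V)$. Running through the four axioms, these identities are exactly \eqref{eq:anLei rep1}--\eqref{eq:anLei rep11}: \eqref{eq:anLei4} produces \eqref{eq:anLei rep4}, \eqref{eq:anLei rep7}, \eqref{eq:anLei rep11}; \eqref{eq:anLei2} produces \eqref{eq:anLei rep2}, \eqref{eq:anLei rep5}, \eqref{eq:anLei rep9}; \eqref{eq:anLei3} produces \eqref{eq:anLei rep3}, \eqref{eq:anLei rep6}, \eqref{eq:anLei rep10}; and \eqref{eq:anLei1} produces \eqref{eq:anLei rep8}, \eqref{eq:anLei rep1} together with one further relation which a short manipulation using \eqref{eq:anLei rep1}, \eqref{eq:anLei rep4}, \eqref{eq:anLei rep7} and \eqref{eq:sum linear} reveals to be redundant. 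Since this dictionary works in both directions, the $V$-components of the four axioms on $A\oplus V$ hold if and only if \eqref{eq:anLei rep1}--\eqref{eq:anLei rep11} hold, which is the stated equivalence.

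For the final assertion no new computation is needed: once $(A\oplus V,\succ_{d},\prec_{d})$ is known to be an anti-pre-Leibniz algebra, Proposition \ref{pro:defi} (\ref{it:3}) gives that its sub-adjacent algebra $(A\oplus V,\circ_{d})$ is a Leibniz algebra, and since $\circ_{d}$ has the form \eqref{eq:sd Leibniz} with data $l_{\circ_{A}},r_{\circ_{A}}$, Proposition \ref{pro:327} then forces $(l_{\circ_{A}},r_{\circ_{A}},V)$ to be a representation of $(A,\circ_{A})$.

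The only laborious point is the middle step: the precise matching of each of the eleven representation identities with an axiom and a variable slot, and the verification that the twelfth relation produced by \eqref{eq:anLei1} is a consequence of the others, so that the list \eqref{eq:anLei rep1}--\eqref{eq:anLei rep11} is neither redundant nor incomplete. Everything else is formal, using only multilinearity together with Propositions \ref{pro:327} and \ref{pro:defi}.
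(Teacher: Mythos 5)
Your proposal is correct and is exactly the ``direct computation'' the paper's proof alludes to: splitting the four anti-pre-Leibniz axioms on $A\oplus V$ into $A$- and $V$-components and matching the latter, slot by slot, with \eqref{eq:anLei rep1}--\eqref{eq:anLei rep11}, then invoking Propositions \ref{pro:defi} and \ref{pro:327} for the final assertion. Your dictionary checks out, including the one nontrivial point: the extra relation coming from the $v$-slot of \eqref{eq:anLei1} does follow from \eqref{eq:anLei rep1}, \eqref{eq:anLei rep4}, \eqref{eq:anLei rep7} and \eqref{eq:sum linear}, so the stated list is indeed equivalent to the split axioms.
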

 \begin{proof}
 	It follows from a direct computation.
 \end{proof}

\begin{ex}
Let $(A,\succ_{A},\prec_{A})$ be an anti-pre-Leibniz algebra. Then
$(\mathcal{L}_{\succ_{A}},\mathcal{R}_{\succ_{A}},
\mathcal{L}_{\prec_{A}},\mathcal{R}_{\prec_{A}},A)$
is a representation of $(A,\succ_{A},\prec_{A})$, which is called
the \textbf{adjoint representation}.
\end{ex}

\begin{pro}\label{pro:anLei dual rep}
Let $(A,\succ_{A},\prec_{A})$ be an anti-pre-Leibniz algebra and $(A,\circ_{A})$ be the sub-adjacent Leibniz algebra. If
$(l_{\succ_{A}},r_{\succ_{A}},l_{\prec_{A}},r_{\prec_{A}},V)$ is a representation of $(A,\succ_{A},\prec_{A})$, then
$ (-l^{*}_{\circ_{A}},-l^{*}_{\prec_{A}}-r^{*}_{\succ_{A}}$,
$
l^{*}_{\prec_{A}},l^{*}_{\circ_{A}}+r^{*}_{\circ_{A}},V^{*})$
is also a representation of $(A,\succ_{A},\prec_{A})$.
In particular, $(-\mathcal{L}^{*}_{\circ_{A}},-\mathcal{L}^{*}_{\prec_{A}}
-\mathcal{R}^{*}_{\succ_{A}},\mathcal{L}^{*}_{\prec_{A}},
\mathcal{L}^{*}_{\circ_{A}}+\mathcal{R}^{*}_{\circ_{A}},
A^{*})$ is a representation of $(A,\succ_{A},\prec_{A})$.
\end{pro}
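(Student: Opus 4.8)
The plan is to verify directly that the five-tuple $(-l^{*}_{\circ_{A}},-l^{*}_{\prec_{A}}-r^{*}_{\succ_{A}},l^{*}_{\prec_{A}},l^{*}_{\circ_{A}}+r^{*}_{\circ_{A}},V^{*})$ satisfies the eleven defining identities \eqref{eq:anLei rep1}--\eqref{eq:anLei rep11} of a representation of $(A,\succ_{A},\prec_{A})$. Write $l'_{\succ_{A}}=-l^{*}_{\circ_{A}}$, $r'_{\succ_{A}}=-l^{*}_{\prec_{A}}-r^{*}_{\succ_{A}}$, $l'_{\prec_{A}}=l^{*}_{\prec_{A}}$, $r'_{\prec_{A}}=l^{*}_{\circ_{A}}+r^{*}_{\circ_{A}}$. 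Since dualization is linear, \eqref{eq:sum linear} gives the associated $\circ$-operators $l'_{\circ_{A}}=l'_{\succ_{A}}+l'_{\prec_{A}}=-l^{*}_{\succ_{A}}$ and $r'_{\circ_{A}}=r'_{\succ_{A}}+r'_{\prec_{A}}=l^{*}_{\succ_{A}}+r^{*}_{\prec_{A}}$. A useful preliminary remark is that the identities \eqref{eq:anLei rep3}, \eqref{eq:anLei rep7} and \eqref{eq:anLei rep9} together amount to saying that $(-l_{\succ_{A}},-r_{\prec_{A}},V)$ satisfies \eqref{eq:rep1}--\eqref{eq:rep3}, i.e. is a representation of the sub-adjacent Leibniz algebra $(A,\circ_{A})$; hence by Lemma \ref{ex:rep Leibniz} the triple $(l'_{\circ_{A}},r'_{\circ_{A}},V^{*})$ is again a representation of $(A,\circ_{A})$. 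This disposes of those among the dual identities that only involve $\circ$-type operators and serves as a consistency check for the rest.

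For each of \eqref{eq:anLei rep1}--\eqref{eq:anLei rep11} I would pair the asserted identity (applied to an arbitrary $u^{*}\in V^{*}$) with an arbitrary $v\in V$, move every operator onto the $V$-side by repeated use of $\langle f^{*}(x)u^{*},w\rangle=-\langle u^{*},f(x)w\rangle$, and rewrite the outcome as an identity among $l_{\succ_{A}},r_{\succ_{A}},l_{\prec_{A}},r_{\prec_{A}}$ evaluated on $v$, now with the variables $x,y$ appearing in a permuted order. Each resulting identity then follows from a suitable combination of the original eleven identities together with \eqref{eq:sum linear}; the preliminary remark absorbs the purely $\circ$-type portion and thereby shortens the remaining bookkeeping. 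The final assertion of the proposition is the special case $V=A$ with $(l_{\succ_{A}},r_{\succ_{A}},l_{\prec_{A}},r_{\prec_{A}},V)$ the adjoint representation $(\mathcal{L}_{\succ_{A}},\mathcal{R}_{\succ_{A}},\mathcal{L}_{\prec_{A}},\mathcal{R}_{\prec_{A}},A)$.

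A more structural organization of the same content is available through semi-direct products: by the previous proposition the representation $(l_{\succ_{A}},r_{\succ_{A}},l_{\prec_{A}},r_{\prec_{A}},V)$ is encoded by an anti-pre-Leibniz algebra structure on $A\oplus V$ (in which $A$ sits as a subalgebra), and granting the ``in particular'' statement for this larger algebra yields a representation of it on $(A\oplus V)^{*}$. A short computation then shows that $V^{*}\subseteq(A\oplus V)^{*}$ is invariant under the action of the subalgebra $A$ and that the restricted action is precisely $(-l^{*}_{\circ_{A}},-l^{*}_{\prec_{A}}-r^{*}_{\succ_{A}},l^{*}_{\prec_{A}},l^{*}_{\circ_{A}}+r^{*}_{\circ_{A}},V^{*})$; since the restriction of a representation to a subalgebra acting on an invariant subspace is again a representation (the eleven identities persist verbatim), the general case reduces to the adjoint case. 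Either way, I expect the main obstacle to be purely computational: there appears to be no device that avoids unfolding all eleven identities, and the dualization introduces signs and argument permutations that must be tracked with care, since a single identity for the dual maps typically expands into a combination of several of the original ones rather than matching just one.
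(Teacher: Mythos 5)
Your proposal is correct and follows essentially the same route as the paper's proof: one verifies the eleven identities \eqref{eq:anLei rep1}--\eqref{eq:anLei rep11} for the dual quintuple directly, by pairing with $v\in V$, transferring all operators across the pairing via $\langle f^{*}(x)u^{*},v\rangle=-\langle u^{*},f(x)v\rangle$, and using combinations of the original identities together with \eqref{eq:sum linear} (the paper carries out the case of \eqref{eq:anLei rep1}, using \eqref{eq:anLei rep1}, \eqref{eq:anLei rep8}, \eqref{eq:anLei rep4}, \eqref{eq:anLei rep7}, and states that the remaining ten are similar). One cosmetic point: the three dual identities corresponding to \eqref{eq:anLei rep3}, \eqref{eq:anLei rep7}, \eqref{eq:anLei rep9} involve the maps $l^{*}_{\circ_{A}}$ and $l^{*}_{\circ_{A}}+r^{*}_{\circ_{A}}$, so to absorb them you should apply Lemma \ref{ex:rep Leibniz} to the Leibniz representation $(l_{\circ_{A}},r_{\circ_{A}},V)$, whereas your stated deduction (Lemma \ref{ex:rep Leibniz} applied to $(-l_{\succ_{A}},-r_{\prec_{A}},V)$) produces the representation $(-l^{*}_{\succ_{A}},l^{*}_{\succ_{A}}+r^{*}_{\prec_{A}},V^{*})$, which is true but is not the fact that disposes of those three identities.
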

\begin{proof}
Let $x,y\in A,u^{*}\in V^{*},v\in V$. Then we have
\begin{eqnarray*}
&&\langle\big((l^{*}_{\circ_{A}}+r^{*}_{\circ_{A}})(x)
(l^{*}_{\succ_{A}}+r^{*}_{\prec_{A}})(y)+
(l^{*}_{\prec_{A}}+r^{*}_{\succ_{A}})(y\circ_{A}x)
-l^{*}_{\circ_{A}}(y)(l^{*}_{\succ_{A}}+r^{*}_{\prec_{A}})(x)\big) u^{*},v\rangle\\
&=&\langle u^{*},\big((l_{\succ_{A}}+r_{\prec_{A}})(y)
(l_{\circ_{A}}+r_{\circ_{A}})(x)
-(l_{\prec_{A}}+r_{\succ_{A}})(y\circ_{A}x)
-(l_{\succ_{A}}+r_{\prec_{A}})(x)l_{\circ_{A}}(y)\big)v\rangle\\
&\overset{\eqref{eq:anLei rep1},\eqref{eq:anLei rep8}}{=}&
\langle u^{*},\big(l_{\succ_{A}}(y)l_{\circ_{A}}(x)+
l_{\succ_{A}}(y)r_{\circ_{A}}(x)+r_{\prec_{A}}(y)l_{\circ_{A}}(x)
+r_{\prec_{A}}(y)r_{\circ_{A}}(x)
-l_{\succ_{A}}(y)l_{\circ_{A}}(x)\\
&&+l_{\succ_{A}}(x)l_{\circ_{A}}(y)
-r_{\prec_{A}}(x)r_{\circ_{A}}(y)
-l_{\succ_{A}}(y)r_{\circ_{A}}(x)
-l_{\succ_{A}}(x)l_{\circ_{A}}(y)-r_{\prec_{A}}(x)l_{\circ_{A}}(y)\big)v\rangle\\
&=&\langle u^{*},\big(r_{\prec_{A}}(y)l_{\circ_{A}}(x)
+r_{\prec_{A}}(y)r_{\circ_{A}}(x)-r_{\prec_{A}}(x)r_{\circ_{A}}(y)
-r_{\prec_{A}}(x)l_{\circ_{A}}(y)\big)v\rangle\\
&
\overset{\eqref{eq:anLei rep4},\eqref{eq:anLei rep7}}{=}&0.
\end{eqnarray*}
Thus \eqref{eq:anLei rep1} holds for  $(-l^{*}_{\circ_{A}},-l^{*}_{\prec_{A}}-r^{*}_{\succ_{A}},
l^{*}_{\prec_{A}},l^{*}_{\circ_{A}}+r^{*}_{\circ_{A}},V^{*})$. Similarly, \eqref{eq:anLei rep2}-\eqref{eq:anLei rep11} hold for $(-l^{*}_{\circ_{A}},-l^{*}_{\prec_{A}}$
$-r^{*}_{\succ_{A}},
l^{*}_{\prec_{A}},l^{*}_{\circ_{A}}+r^{*}_{\circ_{A}},V^{*})$.
Thus $(-l^{*}_{\circ_{A}},-l^{*}_{\prec_{A}}-r^{*}_{\succ_{A}},
l^{*}_{\prec_{A}},l^{*}_{\circ_{A}}+r^{*}_{\circ_{A}},V^{*})$
is a representation of $(A,\succ_{A}$,
$\prec_{A})$.
\end{proof}

\begin{defi}
A bilinear form $\omega$ on an anti-pre-Leibniz algebra  $(A,\succ_{A},\prec_{A})$ is called
{\bf invariant} if \eqref{eq:cor3} and \eqref{eq:cor4} hold. A {\bf quadratic anti-pre-Leibniz algebra}
$(A,\succ_{A},\prec_{A},\omega)$ is an anti-pre-Leibniz algebra $(A,\succ_{A},\prec_{A})$ together with a nondegenerate skew-symmetric invariant bilinear form $\omega$.
\end{defi}

Next we establish a one-to-one correspondence between Leibniz algebras with nondegenerate skew-symmetric $2$-cocycles and quadratic anti-pre-Leibniz algebras.

\begin{pro}
Let $(A,\succ_{A},\prec_{A},\omega)$ be a quadratic anti-pre-Leibniz algebra. Then $\omega$ is a nondegenerate skew-symmetric $2$-cocycle on the sub-adjacent Leibniz algebra $(A,\circ_{A})$. Conversely, let $\omega$ be a nondegenerate skew-symmetric $2$-cocycle on a Leibniz algebra $(A,\circ_{A})$. Then $\omega$ is invariant on the compatible anti-pre-Leibniz algebra $(A,\succ_{A},\prec_{A})$ defined by \eqref{eq:cor3} and \eqref{eq:cor4}.
\end{pro}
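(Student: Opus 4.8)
The plan is to read off the $2$-cocycle identity \eqref{eq:2-cocyle} directly from the invariance relations \eqref{eq:cor3} and \eqref{eq:cor4}, and to observe that the converse is essentially tautological once Theorem \ref{thm:420} is available.

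For the first assertion I would start from a quadratic anti-pre-Leibniz algebra $(A,\succ_{A},\prec_{A},\omega)$. By Proposition \ref{pro:defi} the sub-adjacent algebra $(A,\circ_{A})$ is a Leibniz algebra, so it makes sense to speak of a $2$-cocycle on it. For $x,y,z\in A$ I would then compute, using $\circ_{A}=\succ_{A}+\prec_{A}$ followed by the invariance hypotheses,
\begin{align*}
\omega(x\circ_{A}y,z)&=\omega(x\succ_{A}y,z)+\omega(x\prec_{A}y,z)\\
&\overset{\eqref{eq:cor3},\eqref{eq:cor4}}{=}\omega(y,x\circ_{A}z)-\omega(x,y\circ_{A}z+z\circ_{A}y),
\end{align*}
and finally apply skew-symmetry in the form $\omega(x\circ_{A}y,z)=-\omega(z,x\circ_{A}y)$ to rearrange this into precisely \eqref{eq:2-cocyle}. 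This is a two-line manipulation; the only point requiring care is that the invariance of $\omega$ on an anti-pre-Leibniz algebra is \emph{by definition} the pair \eqref{eq:cor3}--\eqref{eq:cor4}, so nothing beyond these identities and skew-symmetry is needed.

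For the converse, given a nondegenerate skew-symmetric $2$-cocycle $\omega$ on a Leibniz algebra $(A,\circ_{A})$, I would first note that nondegeneracy of $\omega$, i.e. invertibility of $\omega^{\natural}$, guarantees that \eqref{eq:cor3} and \eqref{eq:cor4} really do determine well-defined bilinear operations $\succ_{A},\prec_{A}:A\otimes A\rightarrow A$: for fixed $x,y$ the linear functionals $z\mapsto\omega(y,x\circ_{A}z)$ and $z\mapsto-\omega(x,y\circ_{A}z+z\circ_{A}y)$ are represented by unique elements of $A$ through $\omega^{\natural}$, depending bilinearly on $x,y$. Theorem \ref{thm:420} then says that the resulting $(A,\succ_{A},\prec_{A})$ is an anti-pre-Leibniz algebra with sub-adjacent Leibniz algebra $(A,\circ_{A})$. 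Since $\succ_{A}$ and $\prec_{A}$ satisfy \eqref{eq:cor3} and \eqref{eq:cor4} by construction, $\omega$ is an invariant bilinear form on $(A,\succ_{A},\prec_{A})$, and being also nondegenerate and skew-symmetric it makes $(A,\succ_{A},\prec_{A},\omega)$ a quadratic anti-pre-Leibniz algebra. I do not anticipate any genuine obstacle here; the statement is a packaging of the earlier results, and one further checks (already implicit in Theorem \ref{thm:420}) that the two passages are mutually inverse, yielding the asserted one-to-one correspondence.
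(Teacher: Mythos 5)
Your proposal is correct and follows essentially the same route as the paper: the first half is the direct check obtained by adding \eqref{eq:cor3} and \eqref{eq:cor4} and invoking skew-symmetry to recover \eqref{eq:2-cocyle}, and the second half is an immediate application of Theorem \ref{thm:420}, with invariance holding by construction. Your extra remarks on well-definedness of $\succ_{A},\prec_{A}$ via $\omega^{\natural}$ only make explicit what the paper leaves implicit.
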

\begin{proof}
	The first half part follows from a direct checking.
 The second half part follows from Theorem \ref{thm:420} immediately.
\end{proof}

\begin{pro}
Let $(A,\succ_{A},\prec_{A},\omega)$ be a quadratic anti-pre-Leibniz algebra. Then $(\mathcal{L}_{\succ_{A}},\mathcal{R}_{\succ_{A}},$
$\mathcal{L}_{\prec_{A}},\mathcal{R}_{\prec_{A}},A)$
and
$(-\mathcal{L}^{*}_{\circ_{A}},-\mathcal{L}^{*}_{\prec_{A}}
-\mathcal{R}^{*}_{\succ_{A}},\mathcal{L}^{*}_{\prec_{A}},
\mathcal{L}^{*}_{\circ_{A}}+\mathcal{R}^{*}_{\circ_{A}},A^{*})$
are equivalent as representations of $(A,\succ_{A},\prec_{A})$.
\end{pro}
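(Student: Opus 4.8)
The plan is to exhibit the intertwining isomorphism explicitly: take $\phi=\omega^{\natural}:A\to A^{*}$, which is a linear isomorphism since $\omega$ is nondegenerate. By the notion of equivalence in Definition \ref{defi:anLei rep}, it then suffices to verify the four operator identities
\begin{align*}
\omega^{\natural}\mathcal{L}_{\succ_{A}}(x)&=-\mathcal{L}^{*}_{\circ_{A}}(x)\,\omega^{\natural}, &
\omega^{\natural}\mathcal{R}_{\succ_{A}}(x)&=\big(-\mathcal{L}^{*}_{\prec_{A}}-\mathcal{R}^{*}_{\succ_{A}}\big)(x)\,\omega^{\natural},\\
\omega^{\natural}\mathcal{L}_{\prec_{A}}(x)&=\mathcal{L}^{*}_{\prec_{A}}(x)\,\omega^{\natural}, &
\omega^{\natural}\mathcal{R}_{\prec_{A}}(x)&=\big(\mathcal{L}^{*}_{\circ_{A}}+\mathcal{R}^{*}_{\circ_{A}}\big)(x)\,\omega^{\natural},
\end{align*}
for all $x\in A$. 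Each side is an element of $A^{*}$, so I would pair against an arbitrary $z\in A$ and use the defining relations of $\omega^{\natural}$ and of the dualized operators to turn each identity into a scalar identity among values of $\omega$.

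All four of the resulting scalar identities are then settled by the invariance equations \eqref{eq:cor3}, \eqref{eq:cor4} and the skew-symmetry of $\omega$. The $\mathcal{L}_{\succ_{A}}$-identity is just $\omega(x\succ_{A}y,z)=\omega(y,x\circ_{A}z)$, i.e.\ \eqref{eq:cor3}, and the $\mathcal{R}_{\prec_{A}}$-identity is just $\omega(y\prec_{A}x,z)=-\omega(y,x\circ_{A}z+z\circ_{A}x)$, i.e.\ \eqref{eq:cor4}; indeed these two are precisely the equivalence of $(-\mathcal{L}_{\succ_{A}},-\mathcal{R}_{\prec_{A}},A)$ with $(\mathcal{L}^{*}_{\circ_{A}},-\mathcal{L}^{*}_{\circ_{A}}-\mathcal{R}^{*}_{\circ_{A}},A^{*})$ already established in Theorem \ref{thm:420}. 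The $\mathcal{L}_{\prec_{A}}$-identity reads $\omega(x\prec_{A}y,z)=-\omega(y,x\prec_{A}z)$, and both sides reduce to $-\omega(x,y\circ_{A}z+z\circ_{A}y)$ via \eqref{eq:cor4} (applied to the right-hand side after invoking skew-symmetry). Finally, the $\mathcal{R}_{\succ_{A}}$-identity reads $\omega(y\succ_{A}x,z)=\omega(y,x\prec_{A}z+z\succ_{A}x)$; its left-hand side equals $\omega(x,y\circ_{A}z)$ by \eqref{eq:cor3}, while by skew-symmetry its right-hand side equals $-\omega(z\succ_{A}x+x\prec_{A}z,y)$, which is again $\omega(x,y\circ_{A}z)$ by the auxiliary identity \eqref{eq:bf3}, namely $\omega(u\succ_{A}v+v\prec_{A}u,w)=-\omega(v,w\circ_{A}u)$, obtained by adding \eqref{eq:cor3} and \eqref{eq:cor4}.

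I do not expect a real obstacle here: once $\phi=\omega^{\natural}$ is chosen, everything is forced, and the remaining work is the bookkeeping of matching the eight structure maps of the two representations. The only mild subtleties are that half of the intertwining relations are already contained in Theorem \ref{thm:420}, and that the $\mathcal{R}_{\succ_{A}}$-relation requires the combined identity \eqref{eq:bf3} rather than \eqref{eq:cor3} or \eqref{eq:cor4} individually.
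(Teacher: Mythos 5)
Your proposal is correct and takes essentially the same route as the paper: the paper likewise uses $\omega^{\natural}$ as the intertwining isomorphism and verifies the four operator relations from \eqref{eq:cor3}, \eqref{eq:cor4}, the symmetry of the right-hand side of \eqref{eq:cor4} in its last two arguments, and the summed identity \eqref{eq:bf3}. The only difference is organizational: the paper packages the latter two observations as auxiliary displayed identities before reading off the intertwining relations, which is exactly the bookkeeping you describe.
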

\begin{proof}
Let $x,y,z\in A$. Observing the RHS of \eqref{eq:cor4} is symmetric in $y$ and $z$, we have
\begin{equation}\label{eq:cor3.27}
\omega(x\prec_{A}y,z)=\omega(x\prec_{A}z,y)
=-\omega(y,x\prec_{A}z).
\end{equation}
Then we obtain
\begin{equation}\label{eq:cor3.37}
\omega(x\succ_{A}y,z)\overset{\eqref{eq:bf3}}{=}
-\omega(y,z\circ_{A}x)-\omega(y\prec_{A}x,z)
\overset{\eqref{eq:cor3},\eqref{eq:cor3.27}}{=}
\omega(x,z\succ_{A}y)+\omega(x,y\prec_{A}z).
\end{equation}
Thus by \eqref{eq:cor3}, \eqref{eq:cor3.37}, \eqref{eq:cor3.27} and \eqref{eq:cor4}, we respectively have
\begin{eqnarray*}
&&\omega^{\natural}\big(\mathcal{L}_{\succ_{A}}(x)y\big)
=-\mathcal{L}^{*}_{\circ_{A}}(x)\omega^{\natural}(y),\;
\omega^{\natural}\big(\mathcal{R}_{\succ_{A}}(y)x\big)
=(-\mathcal{L}^{*}_{\prec_{A}}
-\mathcal{R}^{*}_{\succ_{A}})(y)\omega^{\natural}(x),\\
&&\omega^{\natural}\big(\mathcal{L}_{\prec_{A}}(x)y\big)
=\mathcal{L}^{*}_{\prec_{A}}(x)\omega^{\natural}(y),\;
\omega^{\natural}\big(\mathcal{R}_{\prec_{A}}(y)x\big)
=(\mathcal{L}^{*}_{\circ_{A}}+\mathcal{R}^{*}_{\circ_{A}})(y)\omega^{\natural}(x)
.
\end{eqnarray*}
Hence, the bijection $\omega^{\natural}:A\rightarrow A^*$ gives the equivalence between $(\mathcal{L}_{\succ_{A}},\mathcal{R}_{\succ_{A}},
\mathcal{L}_{\prec_{A}},\mathcal{R}_{\prec_{A}},A)$
and
$(-\mathcal{L}^{*}_{\circ_{A}},-\mathcal{L}^{*}_{\prec_{A}}
-\mathcal{R}^{*}_{\succ_{A}},\mathcal{L}^{*}_{\prec_{A}},
\mathcal{L}^{*}_{\circ_{A}}+\mathcal{R}^{*}_{\circ_{A}},A^{*})$
as representations of $(A,\succ_{A},\prec_{A})$.
\end{proof}

\section{Admissible  Novikov dialgebras as a subclass of anti-pre-Leibniz algebras}\label{sec:3}\

We introduce the notion of admissible Novikov dialgebras as a subclass of anti-pre-Leibniz algebras and give the one-to-one correspondence between them and Novikov dialgebras as a subclass of transformed pre-Leibniz algebras.
We also study the relationship between (admissible) Novikov dialgebras and other types of algebras, such as compatible Leibniz algebras and Gel'fand-Dorfman dialgebras.


\begin{defi}\label{defi:Nov dialgebra}\cite{Kol}
A \textbf{Novikov dialgebra} is a triple $(A, \vdash_{A} ,\dashv_{A})$, where $A$ is a vector space and $ \vdash_{A} ,\dashv_{A}:A\otimes A\rightarrow A$ are multiplications such that the following equations hold:
\begin{eqnarray}
&&x\vdash_{A}(y \vdash_{A} z )=( x\vdash_{A}y )\vdash_{A}z-(y\dashv_{A}x)\vdash_{A}z+y\vdash_{A}(x\vdash_{A}z),\label{eq:nd1}\\
&&(y\vdash_{A}z)\dashv_{A}x=y\vdash_{A}(z\dashv_{A}x)-z \dashv_{A}(y\vdash_{A}x)
+(z\dashv_{A}y)\dashv_{A}x,\label{eq:nd2}\\
&&z\dashv_{A}(x\vdash_{A}y)=z\dashv_{A}(x\dashv_{A}y),\label{eq:nd3}\\
&&(z\dashv_{A}y)\dashv_{A}x=(z\dashv_{A}x)\dashv_{A}y,\label{eq:nd4}\\
&&(y\dashv_{A}x)\vdash_{A}z=(y\vdash_{A}z)\dashv_{A}x=(y\vdash_{A}x)\vdash_{A}z,\;\forall x,y,z\in A.\label{eq:nd5}
\end{eqnarray}
\end{defi}

Recall that a \textbf{pre-Leibniz algebra} \cite{ST} is a triple $(A,\triangleright_{A},\triangleleft_{A})$, where $A$ is a vector space, and $\triangleright_{A},\triangleleft_{A}:A\otimes A\rightarrow A$ are multiplications such that the following equations hold:
\begin{eqnarray}
	&&x\triangleright_{A}(y\triangleright_{A} z)=(x\triangleright_{A}y)\triangleright_{A} z+(x\triangleleft_{A}y)\triangleright_{A} z+y\triangleright_{A}(x\triangleright_{A} z),\label{eq:pre-L1}\\
	&&x\triangleleft_{A}(y\triangleright_{A}z)=y\triangleright_{A}(x\triangleleft_{A}z)-(y\triangleright_{A}x)\triangleleft_{A} z-x\triangleleft_{A}(y\triangleleft_{A} z),\label{eq:pre-L2}\\
	&&(x\triangleright_{A} y)\triangleleft_{A} z=-(y\triangleleft_{A}x)\triangleleft_{A} z,\;\;\forall x,y,z\in A.\label{eq:pre-L3}
\end{eqnarray}
Then we have the following result.

\begin{pro}\label{pro:transformed}
	Let $A$ be a vector space, $\vdash_{A},\dashv_{A}:A\otimes A\rightarrow A$ be multiplications and
	\begin{eqnarray}\label{eq:transformed}
		x\triangleright_{A}y=x\vdash_{A}y,\;
		x\triangleleft_{A}y=-y\dashv_{A}x,\;\forall x,y\in A.
	\end{eqnarray}
	Then
	$(A,\triangleright_{A},\triangleleft_{A})$ is a pre-Leibniz algebra if and only if
	\eqref{eq:nd1}-\eqref{eq:nd3} hold for $(A,\vdash_{A},\dashv_{A})$.
	In this case, we say $(A,\vdash_{A},\dashv_{A})$ is a {\bf transformed pre-Leibniz algebra}.
\end{pro}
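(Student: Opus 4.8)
The plan is to prove Proposition \ref{pro:transformed} by a direct comparison of defining identities under the substitution \eqref{eq:transformed}. Since a pre-Leibniz algebra is defined by the three identities \eqref{eq:pre-L1}--\eqref{eq:pre-L3} and the claim is that these are equivalent to \eqref{eq:nd1}--\eqref{eq:nd3} for $(A,\vdash_A,\dashv_A)$, it suffices to show that \eqref{eq:pre-L$i$} becomes exactly \eqref{eq:nd$i$} (up to relabeling of the three free variables) for each $i\in\{1,2,3\}$. First I would substitute $x\triangleright_A y = x\vdash_A y$ and $x\triangleleft_A y = -y\dashv_A x$ into each of the three pre-Leibniz axioms and simplify.

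For \eqref{eq:pre-L1}, the left side $x\triangleright_A(y\triangleright_A z)$ becomes $x\vdash_A(y\vdash_A z)$, the term $(x\triangleright_A y)\triangleright_A z$ becomes $(x\vdash_A y)\vdash_A z$, the term $y\triangleright_A(x\triangleright_A z)$ becomes $y\vdash_A(x\vdash_A z)$, and the key term $(x\triangleleft_A y)\triangleright_A z = (-y\dashv_A x)\triangleright_A z = -(y\dashv_A x)\vdash_A z$; collecting gives precisely \eqref{eq:nd1}. For \eqref{eq:pre-L3}, $(x\triangleright_A y)\triangleleft_A z = -z\dashv_A(x\vdash_A y)$ and $-(y\triangleleft_A x)\triangleleft_A z = -\big(-(x\dashv_A y)\big)\triangleleft_A z = (x\dashv_A y)\triangleleft_A z = -z\dashv_A(x\dashv_A y)$, so the identity reads $-z\dashv_A(x\vdash_A y) = -z\dashv_A(x\dashv_A y)$, which is \eqref{eq:nd3}. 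For \eqref{eq:pre-L2}, each of the four terms must be rewritten: $x\triangleleft_A(y\triangleright_A z) = -(y\vdash_A z)\dashv_A x$; $y\triangleright_A(x\triangleleft_A z) = y\vdash_A(-z\dashv_A x) = -y\vdash_A(z\dashv_A x)$; $(y\triangleright_A x)\triangleleft_A z = -z\dashv_A(y\vdash_A x)$; and $x\triangleleft_A(y\triangleleft_A z) = x\triangleleft_A(-z\dashv_A y) = -\big(-z\dashv_A y\big)\dashv_A x = (z\dashv_A y)\dashv_A x$. Substituting, \eqref{eq:pre-L2} becomes $-(y\vdash_A z)\dashv_A x = -y\vdash_A(z\dashv_A x) + z\dashv_A(y\vdash_A x) - (z\dashv_A y)\dashv_A x$, and multiplying through by $-1$ yields exactly \eqref{eq:nd2}.

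Since each equivalence \eqref{eq:pre-L$i$} $\Leftrightarrow$ \eqref{eq:nd$i$} is an identity-by-identity rewriting with no cancellation across the three axioms, the conjunction of \eqref{eq:pre-L1}--\eqref{eq:pre-L3} for $(A,\triangleright_A,\triangleleft_A)$ is equivalent to the conjunction of \eqref{eq:nd1}--\eqref{eq:nd3} for $(A,\vdash_A,\dashv_A)$, proving the proposition. I do not anticipate a genuine obstacle here; the only point requiring care is bookkeeping of signs introduced by the two occurrences of $\triangleleft_A$ (each contributing a minus sign and a swap of arguments), especially in \eqref{eq:pre-L2} where $\triangleleft_A$ appears nested, so I would write out that case explicitly and leave the other two to the reader as a routine check.
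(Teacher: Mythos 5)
Your proof is correct: each of the three pre-Leibniz identities \eqref{eq:pre-L1}--\eqref{eq:pre-L3}, rewritten via \eqref{eq:transformed}, is term-by-term exactly \eqref{eq:nd1}--\eqref{eq:nd3} (with the signs from $x\triangleleft_A y=-y\dashv_A x$ handled correctly, including the nested occurrence in \eqref{eq:pre-L2}), so the stated equivalence follows. This is precisely the direct substitution check the paper has in mind (it omits the proof as routine), so there is nothing further to add.
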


\delete{
\begin{pro}
A Novikov dialgebra is a pre-Leibniz algebra.
\end{pro}
\begin{proof}
It is clear that the difference between \eqref{eq:LN4} and \eqref{eq:LN5} gives \eqref{eq:pre-L3}. Thus \eqref{eq:LN3}-\eqref{eq:LN5} hold if and only if \eqref{eq:pre-L1}-\eqref{eq:pre-L3} hold. Hence the conclusion follows.
\end{proof}}

\begin{defi}
Let $A$ be a vector space with multiplications $\succ_{A},\prec_{A}:A\otimes A\rightarrow A$. The triple $(A,\succ_{A},\prec_{A})$ is called
an \textbf{admissible Novikov dialgebra} if \eqref{eq:anLei2}-\eqref{eq:anLei4} and the following equations hold for all $x,y,z\in A$:
\begin{eqnarray}
(x\succ_{A}y)\succ_{A}z&=&-(y\prec_{A}x)\succ_{A}z,\label{eq:adm Nov dia1}\\
x\prec_{A}(y\prec_{A}z)-y\prec_{A}(x\prec_{A}z)
&=&2(x\prec_{A}y)\prec_{A}z-2(y\prec_{A}x)\prec_{A}z,\label{eq:adm Nov dia2}\\
(x\succ_{A}y)\succ_{A}z+y\prec_{A}(x\succ_{A}z)
&=&2x\succ_{A}(y\circ_{A}z).\label{eq:adm Nov dia3}
\end{eqnarray}
\end{defi}

\begin{pro}
An admissible Novikov dialgebra is an anti-pre-Leibniz algebra.
\end{pro}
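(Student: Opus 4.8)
The plan is to reduce the statement to the single remaining anti-pre-Leibniz identity \eqref{eq:anLei1} and then to close the argument by a self-referential trick. Since an admissible Novikov dialgebra $(A,\succ_A,\prec_A)$ satisfies \eqref{eq:anLei2}--\eqref{eq:anLei4} by definition, the equivalence (\ref{it:1})$\,\Leftrightarrow\,$(\ref{it:3}) of Proposition \ref{pro:defi} shows that it suffices to verify \eqref{eq:anLei1}, i.e., to show that
\[
D(x,y,z):=(x\circ_A y)\prec_A z-x\succ_A(y\circ_A z)+y\succ_A(x\circ_A z)
\]
vanishes for all $x,y,z\in A$, where $\circ_A=\succ_A+\prec_A$. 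Recall that, by the computation in the proof of Proposition \ref{pro:defi} (which uses only \eqref{eq:anLei2}--\eqref{eq:anLei4}), this same $D$ is also equal to $(x\circ_A y)\succ_A z+y\prec_A(x\circ_A z)-x\prec_A(y\circ_A z)$; this second presentation of $D$ will be the crux.

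First I would rewrite the three summands of $D$ using the extra axioms. Expanding $(x\circ_A y)\prec_A z$ by \eqref{eq:split}, then using \eqref{eq:anLei4} and \eqref{eq:adm Nov dia2}, gives $(x\circ_A y)\prec_A z=\tfrac12\bigl(x\prec_A(y\prec_A z)-y\prec_A(x\prec_A z)\bigr)$. Next, \eqref{eq:adm Nov dia3} followed by \eqref{eq:adm Nov dia1} gives $x\succ_A(y\circ_A z)=\tfrac12\bigl(y\prec_A(x\succ_A z)-(y\prec_A x)\succ_A z\bigr)$, and symmetrically for $y\succ_A(x\circ_A z)$. Finally, expanding $x\prec_A y=x\circ_A y-x\succ_A y$ and applying \eqref{eq:adm Nov dia1} once more yields the auxiliary identity $(x\prec_A y)\succ_A z-(y\prec_A x)\succ_A z=(x\circ_A y)\succ_A z$.

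Substituting these into $D$, factoring out $\tfrac12$, pairing $x\prec_A(y\prec_A z)$ with $x\prec_A(y\succ_A z)$ (and its $x\leftrightarrow y$ counterpart) to reconstitute $x\prec_A(y\circ_A z)$-type terms, and applying the auxiliary identity to the remaining $\succ$-terms, the plan predicts
\[
D(x,y,z)=-\tfrac12\bigl((x\circ_A y)\succ_A z+y\prec_A(x\circ_A z)-x\prec_A(y\circ_A z)\bigr).
\]
But the parenthesized quantity is precisely the second presentation of $D$ recalled above, so $D=-\tfrac12 D$, whence $\tfrac32 D=0$ and $D=0$ since $\mathbb K$ has characteristic zero. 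This establishes \eqref{eq:anLei1}; in particular $\circ_A$ is a Leibniz multiplication and $(-\mathcal{L}_{\succ_A},-\mathcal{R}_{\prec_A},A)$ a representation, so Proposition \ref{pro:defi} gives that $(A,\succ_A,\prec_A)$ is an anti-pre-Leibniz algebra.

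The main obstacle is psychological as much as computational: each individual substitution looks as though it merely re-proves the known equivalence \eqref{eq:anLei1}$\,\Leftrightarrow\,$\eqref{eq:anLei equivalent}, so one must persevere until $D$ is exhibited as a multiple of itself. Beyond that, the only care needed is in the signs of the auxiliary identity $(x\prec_A y)\succ_A z-(y\prec_A x)\succ_A z=(x\circ_A y)\succ_A z$ and in correctly regrouping the $\prec$-after-$\prec$ and $\prec$-after-$\succ$ summands into $\circ_A$-products when collecting terms.
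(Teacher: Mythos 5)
Your proof is correct and follows essentially the same route as the paper: both reduce to the remaining identity via Proposition \ref{pro:defi}, use \eqref{eq:anLei2}--\eqref{eq:anLei4} together with \eqref{eq:adm Nov dia1}--\eqref{eq:adm Nov dia3} to exhibit the defect as a scalar multiple of itself (your $D=-\tfrac12 D$ versus the paper's $2E=-E$, giving $3E=0$), and conclude it vanishes since the characteristic is zero.
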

\begin{proof}
Let $(A,\succ_{A},\prec_{A})$ is an admissible Novikov dialgebra.
For all $x,y,z\in A$, we have
\begin{eqnarray*}
&&2x\prec_{A}(y\circ_{A}z)-2y\prec_{A}(x\circ_{A}z)-2(x\circ_{A}y)\succ_{A}z\\
&\overset{\eqref{eq:anLei2}-\eqref{eq:anLei4}}{=}&
2(y\prec_{A}x)\prec_{A}z+2x\succ_{A}(y\circ_{A}z)
-2(x\prec_{A}y)\prec_{A}z-2y\succ_{A}(x\circ_{A}z)\\
&\overset{\eqref{eq:adm Nov dia2},\eqref{eq:adm Nov dia3}}{=}&
(x\succ_{A}y)\succ_{A}z+y\prec_{A}(x\circ_{A}z)
-(y\succ_{A}x)\succ_{A}z-x\prec_{A}(y\circ_{A}z)\\
&\overset{\eqref{eq:adm Nov dia1}}{=}&
y\prec_{A}(x\circ_{A}z)-x\prec_{A}(y\circ_{A}z)+(x\circ_{A}y)\succ_{A}z.
\end{eqnarray*}
Thus
\begin{equation*}
3(x\circ_{A}y)\succ_{A}z-3x\prec_{A}(y\circ_{A}z)
+3y\prec_{A}(x\circ_{A}z)=0,
\end{equation*}
that is, \eqref{eq:anLei equivalent} holds. By Proposition
\ref{pro:defi} (\ref{it:2}), $(A,\succ_{A},\prec_{A})$ is an anti-pre-Leibniz algebra.
\end{proof}

\delete{
\begin{defi}\cite{GLB}
Let $A$ be a vector space with multiplications $\triangleright_{A},\triangleleft_{A}:A\otimes A\rightarrow A$. Define multiplications $\succ_{A},\prec_{A}:A\otimes A\rightarrow A$ respectively by
\begin{eqnarray}
x\succ_{A}y=x\triangleright_{A}y+qx\triangleleft_{A}y,
\;x\prec_{A}y=x\triangleleft_{A}y+qx\triangleright_{A}y,\;\forall x,y\in A,
\end{eqnarray}
for some $q\in \mathbb {K}$. Then the triple $(A,\succ_{A},\prec_{A})$ is called the \textbf{$q$-algebra} of $(A,\triangleright_{A},\triangleleft_{A})$.
\end{defi}}

\begin{pro}\label{pro:-2-alg}
Let $(A,\vdash_{A},\dashv_{A})$ be a transformed pre-Leibniz algebra, and
\begin{eqnarray}\label{eq:-2algebra}
x\succ_{A}y=x\vdash_{A}y+2y \dashv_{A}x,\;
x\prec_{A}y=-y\dashv_{A}x-2x\vdash_{A}y,\;\forall x,y\in A.
\end{eqnarray}
Then $(A,\succ_{A},\prec_{A})$ is an anti-pre-Leibniz algebra if and only if $(A,\vdash_{A},\dashv_{A})$ is further a Novikov dialgebra. Moreover, in this case, $(A,\succ_{A},\prec_{A})$ is an admissible Novikov dialgebra.
\end{pro}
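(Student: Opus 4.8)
The plan is to work throughout with the multiplication $\circ_{A}=\succ_{A}+\prec_{A}$, which by \eqref{eq:-2algebra} is
\[
x\circ_{A}y \;=\; -\,x\vdash_{A}y+y\dashv_{A}x \;=\; -\bigl(x\triangleright_{A}y+x\triangleleft_{A}y\bigr),
\]
with $\triangleright_{A},\triangleleft_{A}$ as in \eqref{eq:transformed}. Since $(A,\vdash_{A},\dashv_{A})$ is by hypothesis a transformed pre-Leibniz algebra, Proposition \ref{pro:transformed} gives that \eqref{eq:nd1}--\eqref{eq:nd3} hold and that $(A,\triangleright_{A},\triangleleft_{A})$ is a pre-Leibniz algebra; in particular $(A,\triangleright_{A}+\triangleleft_{A})$ is a Leibniz algebra, hence so is $(A,\circ_{A})$, since the negative of a Leibniz multiplication is again a Leibniz multiplication. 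So from the outset I may use \eqref{eq:nd1}--\eqref{eq:nd3} freely, and the only real point is how \eqref{eq:nd4} and \eqref{eq:nd5} match up with the anti-pre-Leibniz axioms.

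For the ``if'' part together with the ``moreover'' assertion, I would assume in addition that $(A,\vdash_{A},\dashv_{A})$ is a Novikov dialgebra, i.e.\ that \eqref{eq:nd4} and \eqref{eq:nd5} also hold, and verify directly that $(A,\succ_{A},\prec_{A})$ satisfies the six identities \eqref{eq:anLei2}--\eqref{eq:anLei4} and \eqref{eq:adm Nov dia1}--\eqref{eq:adm Nov dia3} defining an admissible Novikov dialgebra. Each verification is the same routine: substitute \eqref{eq:-2algebra}, expand, collapse the $\vdash_{A}$--$\dashv_{A}$ mixed cubic words using \eqref{eq:nd1}--\eqref{eq:nd3}, and observe that the surviving residue vanishes by \eqref{eq:nd4} and \eqref{eq:nd5}. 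For instance, both \eqref{eq:anLei4} and \eqref{eq:adm Nov dia1} reduce after a single use of \eqref{eq:nd3} to the relation $(x\vdash_{A}y)\vdash_{A}z+2(y\dashv_{A}x)\vdash_{A}z=(x\dashv_{A}y)\vdash_{A}z+2(y\vdash_{A}x)\vdash_{A}z$, which is immediate from \eqref{eq:nd5}. Once $(A,\succ_{A},\prec_{A})$ is shown to be an admissible Novikov dialgebra, the earlier proposition that an admissible Novikov dialgebra is an anti-pre-Leibniz algebra finishes this half and simultaneously yields the ``moreover'' statement.

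For the ``only if'' part I would assume $(A,\succ_{A},\prec_{A})$ is an anti-pre-Leibniz algebra, so that \eqref{eq:anLei1}--\eqref{eq:anLei4}, and hence also \eqref{eq:anLei equivalent} by Proposition \ref{pro:defi}, are available; since \eqref{eq:nd1}--\eqref{eq:nd3} are already known it remains to extract \eqref{eq:nd4} and \eqref{eq:nd5}. First, substituting \eqref{eq:-2algebra} into \eqref{eq:anLei2} and reducing by \eqref{eq:nd1}--\eqref{eq:nd3} leaves precisely $6\bigl((z\dashv_{A}x)\dashv_{A}y-(z\dashv_{A}y)\dashv_{A}x\bigr)=0$, i.e.\ \eqref{eq:nd4}. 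Next, substituting into \eqref{eq:anLei equivalent} and using \eqref{eq:nd1}--\eqref{eq:nd4} leaves $(y\vdash_{A}z)\dashv_{A}x-(x\vdash_{A}z)\dashv_{A}y+(x\vdash_{A}y)\vdash_{A}z-(y\dashv_{A}x)\vdash_{A}z=0$, while substituting into \eqref{eq:anLei4} leaves the relation displayed in the previous paragraph; symmetrizing the former in $x$ and $y$, where the mixed terms cancel, and combining the result with the latter forces $(a\dashv_{A}b)\vdash_{A}c=(a\vdash_{A}b)\vdash_{A}c$, the first equality of \eqref{eq:nd5}. Finally, substituting into \eqref{eq:anLei3}, simplifying by \eqref{eq:nd1}--\eqref{eq:nd4}, and then feeding in the identity just obtained forces $(a\vdash_{A}b)\vdash_{A}c=(a\vdash_{A}c)\dashv_{A}b$, which is the remaining equality of \eqref{eq:nd5}. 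Hence $(A,\vdash_{A},\dashv_{A})$ is a Novikov dialgebra, completing the equivalence.

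The step I expect to be the main obstacle is the ``only if'' direction. Unlike \eqref{eq:anLei2}, which hands over \eqref{eq:nd4} on the nose, no single anti-pre-Leibniz identity produces \eqref{eq:nd5}: it is a pair of equalities, recovered only once \eqref{eq:nd4} is established and then by combining the simplified forms of \eqref{eq:anLei equivalent}, \eqref{eq:anLei4} and \eqref{eq:anLei3}. The real work is the bookkeeping needed to bring each of these reductions into a comparable canonical form, repeatedly rewriting the mixed cubic words such as $x\vdash_{A}(z\dashv_{A}y)$ and $(x\vdash_{A}z)\dashv_{A}y$ by means of \eqref{eq:nd2} and \eqref{eq:nd3}. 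By contrast the ``if'' direction, though longer, is purely mechanical once \eqref{eq:nd1}--\eqref{eq:nd5} may be invoked.
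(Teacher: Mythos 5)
Your proposal is correct, and its computational core is the same as the paper's: substitute \eqref{eq:-2algebra}, reduce the cubic words with the transformed pre-Leibniz identities \eqref{eq:nd1}--\eqref{eq:nd3} (which, as you note, also make $\circ_A$ automatically Leibniz), and match the surviving residues against \eqref{eq:nd4}--\eqref{eq:nd5}; your specific reduced forms (the $6$-multiple of \eqref{eq:nd4} from \eqref{eq:anLei2}, and the common residue of \eqref{eq:anLei4} and \eqref{eq:adm Nov dia1}) agree with what the paper obtains. The organization differs in two ways. For the equivalence, the paper works identity-by-identity and gets both directions at once: it shows \eqref{eq:anLei2} holds iff \eqref{eq:nd4} holds and that \eqref{eq:anLei3} together with \eqref{eq:anLei4} holds iff \eqref{eq:nd5} holds, then concludes via the fact that $\circ_A$ is already Leibniz (so \eqref{eq:anLei1} comes for free once \eqref{eq:anLei2}--\eqref{eq:anLei4} do, by the argument of Proposition \ref{pro:defi}). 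Your ``only if'' direction instead extracts the two halves of \eqref{eq:nd5} by combining the reduced forms of \eqref{eq:anLei equivalent}, \eqref{eq:anLei4} and \eqref{eq:anLei3}, using a symmetrization in $x,y$ and a small linear-combination step ($P+Q=0$, $P-2Q=0$ forces $P=Q=0$); this checks out and is arguably more transparent about why both equalities in \eqref{eq:nd5} are forced, at the cost of not giving the clean per-identity biconditionals. For the ``if'' direction you verify admissibility first and then invoke the earlier proposition that an admissible Novikov dialgebra is an anti-pre-Leibniz algebra, whereas the paper gets the anti-pre-Leibniz property directly from the equivalences plus the Leibniz property of $\circ_A$ and only afterwards checks \eqref{eq:adm Nov dia1}--\eqref{eq:adm Nov dia3}; both routes are valid, yours simply leans on that proposition where the paper leans on Proposition \ref{pro:defi}.
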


\begin{proof}
By Proposition \ref{pro:transformed}, $(A,\triangleright_{A},\triangleleft_{A})$ with $\triangleright_{A},\triangleleft_{A}$ given by \eqref{eq:transformed} is a pre-Leibniz algebra.
Let the sub-adjacent Leibniz algebra of $(A,\triangleright_{A},\triangleleft_{A})$ be $(A,\bullet_{A})$. Then by \eqref{eq:transformed} and \eqref{eq:-2algebra}, we have
\begin{equation*}
x\circ_{A}y=x\succ_{A}y+x\prec_{A}y
=-x\vdash_{A}y+y\dashv_{A}x
=-x\triangleright_{A}y-x\triangleleft_{A}y=-x\bullet_{A}y, \;\forall x,y\in A.
\end{equation*}
Thus $(A,\circ_{A})$ is a Leibniz algebra.
For all $x,y,z\in A$, we have
{\small
\begin{eqnarray*}
&&(x\succ_{A} y+x\prec_{A} y)\succ_{A}z-y\succ_{A}(x\succ_{A} z)+x\succ_{A}(y\succ_{A} z)\\
&\overset{\eqref{eq:-2algebra}}{=}&
-(x\vdash_{A}y)\vdash_{A}z+(y\dashv_{A}x)\vdash_{A}z
-2z\dashv_{A}(x\vdash_{A}y)+2z\dashv_{A}(y\dashv_{A}x)
-y\vdash_{A}(x\vdash_{A}z)\\
&&
-2y\vdash_{A}(z\dashv_{A}x)-2(x\vdash_{A}z)\dashv_{A}y
-4(z\dashv_{A}x)\dashv_{A}y
+x\vdash_{A}(y\vdash_{A}z)+2x\vdash_{A}(z\dashv_{A}y)\\
&&
+2(y\vdash_{A}z)\dashv_{A}x+4(z\dashv_{A}y)\dashv_{A}x\\
&\overset{\eqref{eq:nd1}}{=}&
-2z\dashv_{A}(x\vdash_{A}y)+2z\dashv_{A}(y\dashv_{A}x)
-2y\vdash_{A}(z\dashv_{A}x)-2(x\vdash_{A}z)\dashv_{A}y
-4(z\dashv_{A}x)\dashv_{A}y\\
&&
+2x\vdash_{A}(z\dashv_{A}y)+2(y\vdash_{A}z)\dashv_{A}x
+4(z\dashv_{A}y)\dashv_{A}x\\
&\overset{\eqref{eq:nd2},\eqref{eq:nd3}}{=}&
6(z\dashv_{A}y)\dashv_{A}x-6(z\dashv_{A}x)\dashv_{A}y.
\end{eqnarray*}}Thus \eqref{eq:anLei2} holds if and only if \eqref{eq:nd4} holds.
Similarly, we have \eqref{eq:anLei3} and \eqref{eq:anLei4} hold if and only if \eqref{eq:nd5} holds.
\delete{
\begin{eqnarray*}
&&x\prec_{A}(y\succ_{A} z+y\prec_{A} z)-(y\succ_{A}x)\prec_{A}z+y\succ_{A}(x\prec_{A}z)\\
&&\overset{\eqref{eq:-2algebra}}{=}-x\triangleleft_{A}(y\triangleright_{A}z)-x\triangleleft_{A}(y\triangleleft_{A}z)
+2x\triangleright_{A}(y\triangleright_{A}z)+2x\triangleright_{A}(y\triangleleft_{A}z)-(y\triangleright_{A}x)\triangleleft_{A}z\\
&&\ \ \
+2(y\triangleleft_{A}x)\triangleleft_{A}z+2(y\triangleright_{A}x)\triangleright_{A}z-4(y\triangleleft_{A}x)\triangleright_{A}z
+y\triangleright_{A}(x\triangleleft_{A}z)-2y\triangleright_{A}(x\triangleright_{A}z)\\
&&\ \ \
-2y\triangleleft_{A}(x\triangleleft_{A}z)+4y\triangleleft_{A}(x\triangleright_{A}z)\\
&&\overset{\eqref{eq:pre-L2}}{=}2x\triangleright_{A}(y\triangleright_{A}z)+2(y\triangleright_{A}x)\triangleright_{A}z
-4(y\triangleleft_{A}x)\triangleright_{A}z-2y\triangleright_{A}(x\triangleright_{A}z)+4y\triangleleft_{A}(x\triangleright_{A}z)\\
&&\overset{\eqref{eq:pre-L1}}{=}6y\triangleleft_{A}(x\triangleright_{A}z)-6(y\triangleleft_{A}x)\triangleright_{A}z,\\
&&(x\succ_{A}y)\prec_{A}z+(y\prec_{A}x)\prec_{A}z\\
&&\overset{\eqref{eq:-2algebra}}{=}(x\triangleright_{A}y)\triangleleft_{A}z-2(x\triangleleft_{A}y)\triangleleft_{A}z
-2(x\triangleright_{A}y)\triangleright_{A}z+4(x\triangleleft_{A}y)\triangleright_{A}z+(y\triangleleft_{A}x)\triangleleft_{A}z\\
&&\ \ \
-2(y\triangleright_{A}x)\triangleleft_{A}z-2(y\triangleleft_{A}x)\triangleright_{A}z+4(y\triangleright_{A}x)\triangleright_{A}z\\
&&\overset{\eqref{eq:pre-L1}}{=}(x\triangleright_{A}y)\triangleleft_{A}z-2(x\triangleleft_{A}y)\triangleleft_{A}z
+4(x\triangleleft_{A}y)\triangleright_{A}z+(y\triangleleft_{A}x)\triangleleft_{A}z\\
&&\ \ \
-2(y\triangleright_{A}x)\triangleleft_{A}z+4(y\triangleright_{A}x)\triangleright_{A}z\\
&&\overset{\eqref{eq:pre-L3}}{=}6(x\triangleleft_{A}y)\triangleright_{A}z+6(y\triangleright_{A}x)\triangleright_{A}z.
\end{eqnarray*}}Then $(A,\succ_{A},\prec_{A})$ is an anti-pre-Leibniz algebra if and only if $(A,\vdash_{A},\dashv_{A})$ is a Novikov dialgebra. Moreover, if $(A,\vdash_{A},\dashv_{A})$ is a Novikov dialgebra, then we have
\begin{eqnarray*}
&&(x\succ_{A}y)\succ_{A}z+(y\prec_{A}x)\succ_{A}z\\
&\overset{\eqref{eq:-2algebra}}{=}&
(x\vdash_{A}y)\vdash_{A}z+2z\dashv_{A}(x\vdash_{A}y)
+2(y\dashv_{A}x)\vdash_{A}z+4z\dashv_{A}(y\dashv_{A}x)\\
&&-(x\dashv_{A}y)\vdash_{A}z-2z\dashv_{A}(x\dashv_{A}y)
-2(y\vdash_{A}x)\vdash_{A}z-4z\dashv_{A}(y\vdash_{A}x)\\
&\overset{\eqref{eq:nd3},\eqref{eq:nd5}}{=}&0.
\end{eqnarray*}
Thus \eqref{eq:adm Nov dia1} holds. Similarly, \eqref{eq:adm Nov dia2} and \eqref{eq:adm Nov dia3} hold. Therefore, $(A,\succ_{A},\prec_{A})$ is an admissible Novikov dialgebra.
\delete{
\begin{eqnarray*}
&&x\prec_{A}(y\prec_{A}z)-y\prec_{A}(x\prec_{A}z)
-2(x\prec_{A}y)\prec_{A}z+2(y\prec_{A}x)\prec_{A}z\\
&\overset{\eqref{eq:-2algebra}}{=}&x\triangleleft_{A}(y\triangleleft_{A}z)
-2x\triangleright_{A}(y\triangleleft_{A}z)-2x\triangleleft_{A}(y\triangleright_{A}z)
+4x\triangleright_{A}(y\triangleright_{A}z)-y\triangleleft_{A}(x\triangleleft_{A}z)\\
&&+2y\triangleright_{A}(x\triangleleft_{A}z)+2y\triangleleft_{A}(x\triangleright_{A}z)
-4y\triangleright_{A}(x\triangleright_{A}z)-2(x\triangleleft_{A}y)\triangleleft_{A}z
+4(x\triangleleft_{A}y)\triangleright_{A}z\\
&&+4(x\triangleright_{A}y)\triangleleft_{A}z
-8(x\triangleright_{A}y)\triangleright_{A}z+2(y\triangleleft_{A}x)\triangleleft_{A}z
-4(y\triangleleft_{A}x)\triangleright_{A}z-4(y\triangleright_{A}x)\triangleleft_{A}z\\
&&+8(y\triangleright_{A}x)\triangleright_{A}z\\
&\overset{\eqref{eq:pre-L3}-\eqref{eq:LN2}}{=}&
4x\triangleright_{A}(y\triangleright_{A}z)-2x\triangleright_{A}(y\triangleleft_{A}z)
+2y\triangleright_{A}(x\triangleleft_{A}z)-4y\triangleright_{A}(x\triangleright_{A}z)
-2(y\triangleright_{A}x)\triangleleft_{A}z\\
&&+2(x\triangleright_{A}y)\triangleleft_{A}z
-6(x\triangleright_{A}y)\triangleright_{A}z-6(x\triangleleft_{A}y)\triangleright_{A}z\\
&\overset{\eqref{eq:pre-L1}}{=}&2y\triangleright_{A}(x\triangleleft_{A}z)
-2x\triangleright_{A}(y\triangleleft_{A}z)-2(y\triangleright_{A}x)\triangleleft_{A}z
+2(x\triangleright_{A}y)\triangleleft_{A}z-2(x\triangleright_{A}y)\triangleright_{A}z\\
&&-2(x\triangleleft_{A}y)\triangleright_{A}z\\
&\overset{\eqref{eq:pre-L2},\eqref{eq:LN2}}{=}&
2x\triangleleft_{A}(y\triangleleft_{A}z)-2x\triangleright_{A}(y\triangleleft_{A}z)
+2(x\triangleright_{A}y)\triangleleft_{A}z-2(x\triangleright_{A}y)\triangleright_{A}z\\
&\overset{\eqref{eq:pre-L2},\eqref{eq:LN1}}{=}&
-2y\triangleleft_{A}(x\triangleright_{A}z)-2(x\triangleright_{A}y)\triangleright_{A}z\\
&\overset{\eqref{eq:LN2}}{=}&0,\\
&&(x\succ_{A}y)\succ_{A}z+y\prec_{A}(x\succ_{A}z)-2x\succ_{A}(y\succ_{A}z)
-2x\succ_{A}(y\prec_{A}z)\\
&\overset{\eqref{eq:-2algebra}}{=}&(x\triangleright_{A}y)\triangleright_{A}z
-2(x\triangleright_{A}y)\triangleleft_{A}z-2(x\triangleleft_{A}y)\triangleright_{A}z
+4(x\triangleleft_{A}y)\triangleleft_{A}z+y\triangleleft_{A}(x\triangleright_{A}z)\\
&&-2y\triangleright_{A}(x\triangleright_{A}z)-2y\triangleleft_{A}(x\triangleleft_{A}z)
+4y\triangleright_{A}(x\triangleleft_{A}z)-2x\triangleright_{A}(y\triangleright_{A}z)
+4x\triangleleft_{A}(y\triangleright_{A}z)\\
&&+4x\triangleright_{A}(y\triangleleft_{A}z)
-8x\triangleleft_{A}(y\triangleleft_{A}z)-2x\triangleright_{A}(y\triangleleft_{A}z)
+4x\triangleleft_{A}(y\triangleleft_{A}z)+4x\triangleright_{A}(y\triangleright_{A}z)\\
&&-8x\triangleleft_{A}(y\triangleright_{A}z)\\
&\overset{\eqref{eq:LN1},\eqref{eq:LN2}}{=}&-2(x\triangleright_{A}y)\triangleleft_{A}z
-6x\triangleleft_{A}(y\triangleright_{A}z)+4(x\triangleleft_{A}y)\triangleleft_{A}z
-2y\triangleright_{A}(x\triangleright_{A}z)-6x\triangleleft_{A}(y\triangleleft_{A}z)\\
&&+4y\triangleright_{A}(x\triangleleft_{A}z)+2x\triangleright_{A}(y\triangleright_{A}z)
+2x\triangleright_{A}(y\triangleleft_{A}z)\\
&\overset{\eqref{eq:pre-L2},\eqref{eq:pre-L3}}{=}&
2(y\triangleright_{A}x)\triangleleft_{A}z-2y\triangleright_{A}(x\triangleleft_{A}z)
-2(x\triangleright_{A}y)\triangleleft_{A}z-2y\triangleright_{A}(x\triangleright_{A}z)
+2x\triangleright_{A}(y\triangleright_{A}z)\\
&&+2x\triangleright_{A}(y\triangleleft_{A}z)\\
&\overset{\eqref{eq:pre-L2},\eqref{eq:LN1}}{=}&2y\triangleleft_{A}(x\triangleright_{A}z)
-2x\triangleleft_{A}(y\triangleright_{A}z)-2y\triangleright_{A}(x\triangleright_{A}z)
+2x\triangleright_{A}(y\triangleright_{A}z)\\
&\overset{\eqref{eq:pre-L1},\eqref{eq:LN2}}{=}&0.
\end{eqnarray*}}
\end{proof}

By \eqref{eq:-2algebra}, we can also express $ \vdash_{A}$ and $\dashv_{A}$ in terms of $\succ_{A}$ and $\prec_{A}$, that is, we have the following expression:
\begin{eqnarray}
x\vdash_{A}y=-\frac{1}{3}x\succ_{A}y-\frac{2}{3}x\prec_{A}y,\;
x\dashv_{A}y=\frac{1}{3}y\prec_{A}x+\frac{2}{3}y\succ_{A}x,\;\forall x,y\in A.
\end{eqnarray}
For simplicity, we
 adjust them as
\begin{eqnarray}\label{eq:2algebra}
x\vdash_{A}y=x\succ_{A}y+2x\prec_{A}y,\;
x\dashv_{A}y=-y\prec_{A}x-2y\succ_{A}x,\;\forall x,y\in A.
\end{eqnarray}

Now we are motivated to study the converse side of Proposition \ref{pro:-2-alg}.

\begin{pro}\label{pro:2-alg}
Let $(A,\succ_{A},\prec_{A})$ be an anti-pre-Leibniz algebra, and
$\vdash_{A},\dashv_{A}:A\otimes A\rightarrow A$ be multiplications given by
\eqref{eq:2algebra}.
Then $(A,\vdash_{A},\dashv_{A})$ is a transformed pre-Leibniz algebra if and only if $(A,\succ_{A},\prec_{A})$ is further an admissible Novikov dialgebra. Moreover, in this case, $(A,\vdash_{A},\dashv_{A})$ is a Novikov dialgebra.
\end{pro}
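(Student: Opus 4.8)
The plan is to mirror the structure of the proof of Proposition \ref{pro:-2-alg}, which established the direction transformed pre-Leibniz $\Longleftrightarrow$ anti-pre-Leibniz under the substitution \eqref{eq:-2algebra}. Here the substitution \eqref{eq:2algebra} is essentially the inverse transformation (up to the harmless rescaling noted before the statement), so the two propositions are two faces of the same linear change of variables. First I would record the key bookkeeping identity: if $\vdash_A,\dashv_A$ are defined from an anti-pre-Leibniz algebra $(A,\succ_A,\prec_A)$ by \eqref{eq:2algebra}, then setting $x\triangleright_A y=x\vdash_A y$ and $x\triangleleft_A y=-y\dashv_A x=2x\succ_A y+x\prec_A y$, one computes $x\triangleright_A y+x\triangleleft_A y=3x\succ_A y+3x\prec_A y=3(x\circ_A y)$, so the would-be sub-adjacent Leibniz multiplication $\bullet_A$ of $(A,\triangleright_A,\triangleleft_A)$ satisfies $x\bullet_A y=3(x\circ_A y)$; since $(A,\circ_A)$ is a Leibniz algebra (being the sub-adjacent algebra of the given anti-pre-Leibniz algebra), so is $(A,3\circ_A)$.

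Next, by Proposition \ref{pro:transformed}, $(A,\vdash_A,\dashv_A)$ is a transformed pre-Leibniz algebra if and only if $(A,\triangleright_A,\triangleleft_A)$ is a pre-Leibniz algebra, i.e. if and only if the three identities \eqref{eq:pre-L1}--\eqref{eq:pre-L3} hold. So the task reduces to showing that, given \eqref{eq:anLei1}--\eqref{eq:anLei4}, the pre-Leibniz axioms for $(A,\triangleright_A,\triangleleft_A)$ are equivalent to the extra admissible-Novikov conditions \eqref{eq:adm Nov dia1}--\eqref{eq:adm Nov dia3}. I would substitute $\triangleright_A,\triangleleft_A$ in terms of $\succ_A,\prec_A$ into each of \eqref{eq:pre-L1}--\eqref{eq:pre-L3}, expand using $\circ_A=\succ_A+\prec_A$, and then repeatedly apply \eqref{eq:anLei2}--\eqref{eq:anLei4} (together with \eqref{eq:anLei equivalent} from Proposition \ref{pro:defi}\,(\ref{it:2})) to cancel all terms except those whose vanishing is precisely one of \eqref{eq:adm Nov dia1}--\eqref{eq:adm Nov dia3}, up to a nonzero scalar. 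Concretely, I expect \eqref{eq:pre-L3} to collapse to a scalar multiple of \eqref{eq:adm Nov dia1}, while \eqref{eq:pre-L1} and \eqref{eq:pre-L2}, after using the anti-pre-Leibniz relations, reduce to scalar multiples of \eqref{eq:adm Nov dia2} and \eqref{eq:adm Nov dia3} respectively. This is the same pattern seen in the proof of Proposition \ref{pro:-2-alg}, where the analogous manipulations produced expressions such as $6(z\dashv_A y)\dashv_A x-6(z\dashv_A x)\dashv_A y$ equalling the relevant combination.

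Finally, for the ``Moreover'' clause: once $(A,\vdash_A,\dashv_A)$ is a transformed pre-Leibniz algebra, the identities \eqref{eq:nd1}--\eqref{eq:nd3} already hold, so to conclude it is a Novikov dialgebra I only need \eqref{eq:nd4} and \eqref{eq:nd5}. But by Proposition \ref{pro:-2-alg} (its forward direction, combined with the observation above that \eqref{eq:2algebra} is the inverse change of variables to \eqref{eq:-2algebra}), the anti-pre-Leibniz algebra $(A,\succ_A,\prec_A)$ being admissible Novikov is exactly what forces $(A,\vdash_A,\dashv_A)$ to satisfy \eqref{eq:nd4}--\eqref{eq:nd5}; alternatively one checks these two identities directly by rewriting them through \eqref{eq:2algebra} in terms of $\succ_A,\prec_A$ and invoking \eqref{eq:anLei2}--\eqref{eq:anLei4} and \eqref{eq:adm Nov dia1}--\eqref{eq:adm Nov dia3}. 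The main obstacle is purely the volume of the expansions in the second paragraph: each pre-Leibniz axiom becomes a sum of eight terms after substitution, and one must apply the anti-pre-Leibniz relations in the right order to see the massive cancellation down to a single admissible-Novikov identity. Keeping careful track of signs and of which of \eqref{eq:anLei2}--\eqref{eq:anLei4} is used at each step is where the real work lies; no genuinely new idea beyond Proposition \ref{pro:-2-alg} is needed.
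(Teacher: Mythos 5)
Your proposal is correct and, for the main equivalence, follows essentially the paper's route: the paper also substitutes \eqref{eq:2algebra} into the transformed pre-Leibniz identities and reduces with the anti-pre-Leibniz relations, the only cosmetic difference being that it works directly with \eqref{eq:nd1}--\eqref{eq:nd3} rather than passing through \eqref{eq:pre-L1}--\eqref{eq:pre-L3} for $(\triangleright_A,\triangleleft_A)$ (these are the same statements by Proposition \ref{pro:transformed}). Two remarks. First, your predicted one-to-one pattern is slightly off: in the actual reduction \eqref{eq:nd1} is equivalent to \eqref{eq:adm Nov dia2} and \eqref{eq:nd3} to \eqref{eq:adm Nov dia1}, but \eqref{eq:nd2} (i.e.\ \eqref{eq:pre-L2}) collapses to the expression $4x\succ_{A}(y\circ_{A}z)-2y\prec_{A}(x\succ_{A}z)+2(y\prec_{A}x)\succ_{A}z$, whose vanishing is \eqref{eq:adm Nov dia3} only modulo \eqref{eq:adm Nov dia1}; since \eqref{eq:nd3} supplies \eqref{eq:adm Nov dia1}, the equivalence of the full systems is unaffected, but you should state the correspondence for \eqref{eq:nd2} as a combination rather than a single axiom. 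Second, for the ``moreover'' clause your shortcut is genuinely different from the paper and valid: composing \eqref{eq:2algebra} with \eqref{eq:-2algebra} returns $(-3\succ_{A},-3\prec_{A})$, and since all the identities involved are homogeneous of degree two in the multiplications, this rescaled pair is again anti-pre-Leibniz, so the ``only if'' direction of Proposition \ref{pro:-2-alg} applied to the transformed pre-Leibniz algebra $(A,\vdash_{A},\dashv_{A})$ immediately yields \eqref{eq:nd4}--\eqref{eq:nd5}; the paper instead verifies \eqref{eq:nd4} and \eqref{eq:nd5} by a direct expansion using \eqref{eq:anLei2}--\eqref{eq:anLei4} and \eqref{eq:adm Nov dia1}--\eqref{eq:adm Nov dia3}. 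Your route saves that computation at the cost of making the homogeneity/rescaling observation explicit, which you should do if you take it.
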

\begin{proof}
For all  $x,y,z\in A$, we have
{\small
\begin{eqnarray*}
&&x\vdash_{A}(y\vdash_{A}z)-(x\vdash_{A}y)\vdash_{A}z
+(y\dashv_{A}x)\vdash_{A}z-y\vdash_{A}(x\vdash_{A}z)\\
&\overset{\eqref{eq:2algebra}}{=}&
x\succ_{A}(y\circ_{A}z)
+x\succ_{A}(y\prec_{A}z)+2x\prec_{A}(y\circ_{A}z)
+2x\prec_{A}(y\prec_{A}z)-3(x\circ_{A}y)\succ_{A}z\\
&&-6(x\circ_{A}y)\prec_{A}z
-y\succ_{A}(x\circ_{A}z)-y\succ_{A}(x\prec_{A}z)-2y\prec_{A}(x\circ_{A}z)
-2y\prec_{A}(x\prec_{A}z)\\
&\overset{\eqref{eq:anLei1},\eqref{eq:anLei2}}{=}&
4y\succ_{A}(x\circ_{A}z)-4x\succ_{A}(y\circ_{A}z)+2x\prec_{A}(y\circ_{A}z)
+2x\prec_{A}(y\prec_{A}z)-2y\succ_{A}(x\succ_{A}z)\\
&&+2x\succ_{A}(y\succ_{A}z)
-2y\prec_{A}(x\circ_{A}z)-2y\prec_{A}(x\prec_{A}z)\\
&\overset{\eqref{eq:anLei1}}{=}&-4(x\circ_{A}y)\prec_{A}z
+2x\prec_{A}(y\prec_{A}z)-2y\prec_{A}(x\prec_{A}z)
+2x\prec_{A}(y\circ_{A}z)-2y\succ_{A}(x\succ_{A}z)\\
&&+2x\succ_{A}(y\succ_{A}z)-2y\prec_{A}(x\circ_{A}z)\\
&\overset{\eqref{eq:anLei2},\eqref{eq:anLei equivalent}}{=}&
-4(x\circ_{A}y)\prec_{A}z+2x\prec_{A}(y\prec_{A}z)
-2y\prec_{A}(x\prec_{A}z)\\
&\overset{\eqref{eq:anLei4}}{=}&
4(y\prec_{A}x)\prec_{A}z-4(x\prec_{A}y)\prec_{A}z
+2x\prec_{A}(y\prec_{A}z)-2y\prec_{A}(x\prec_{A}z).
\end{eqnarray*}}Thus \eqref{eq:nd1} holds if and only if \eqref{eq:adm Nov dia2} holds. Similarly, \eqref{eq:nd2} holds if and only if \eqref{eq:adm Nov dia3} and \eqref{eq:adm Nov dia1} hold, and
\eqref{eq:nd3} holds if and only if \eqref{eq:adm Nov dia1} holds.
\delete{
\begin{eqnarray*}
&&x\triangleleft_{A}(y\triangleright_{A}z)-y\triangleright_{A}(x\triangleleft_{A}z)
+(y\triangleright_{A}x)\triangleleft_{A}z+x\triangleleft_{A}(y\triangleleft_{A}z)\\
&\overset{\eqref{eq:2algebra}}{=}&3x\prec_{A}(y\circ_{A}z)
+6x\succ_{A}(y\circ_{A}z)-y\succ_{A}(x\circ_{A}z)-y\succ_{A}(x\succ_{A}z)
-2y\prec_{A}(x\circ_{A}z)\\
&&-2y\prec_{A}(x\succ_{A}z)
+(y\circ_{A}x)\prec_{A}z+(y\prec_{A}x)\prec_{A}z+2(y\circ_{A}x)\succ_{A}z
+2(y\prec_{A}x)\succ_{A}z\\
&\overset{\eqref{eq:anLei1},\eqref{eq:anLei equivalent}}{=}&
x\prec_{A}(y\circ_{A}z)+5x\succ_{A}(y\circ_{A}z)
-y\succ_{A}(x\succ_{A}z)-2y\prec_{A}(x\succ_{A}z)
+(y\prec_{A}x)\prec_{A}z\\
&&+2(y\prec_{A}x)\succ_{A}z\\
&\overset{\eqref{eq:anLei3}}{=}&(y\succ_{A}x)\prec_{A}z
-y\succ_{A}(x\prec_{A}z)+5x\succ_{A}(y\circ_{A}z)
-y\succ_{A}(x\succ_{A}z)+(y\prec_{A}x)\prec_{A}z\\
&&-2y\prec_{A}(x\succ_{A}z)+2(y\prec_{A}x)\succ_{A}z\\
&\overset{\eqref{eq:anLei1}}{=}&4x\succ_{A}(y\circ_{A}z)
-2y\prec_{A}(x\succ_{A}z)+2(y\prec_{A}x)\succ_{A}z,\\
&&(x\triangleright_{A}y)\triangleleft_{A}z+(y\triangleleft_{A}x)\triangleleft_{A}z\\
&\overset{\eqref{eq:2algebra}}{=}&(x\circ_{A}y)\prec_{A}z
+(x\prec_{A}y)\prec_{A}z+2(x\circ_{A}y)\succ_{A}z
+2(x\prec_{A}y)\succ_{A}z+(y\circ_{A}x)\prec_{A}z\\
&&+(y\succ_{A}x)\prec_{A}z+2(y\circ_{A}x)\succ_{A}z
+2(y\succ_{A}x)\succ_{A}z\\
&\overset{\eqref{eq:anLei1},\eqref{eq:anLei2}}{=}&
(x\prec_{A}y)\prec_{A}z+(y\succ_{A}x)\prec_{A}z
+2(x\prec_{A}y)\succ_{A}z+2(y\succ_{A}x)\succ_{A}z\\
&\overset{\eqref{eq:anLei4}}{=}&
2(x\prec_{A}y)\succ_{A}z+2(y\succ_{A}x)\succ_{A}z.
\end{eqnarray*}}Then $(A,\vdash_{A},\dashv_{A})$ is a transformed pre-Leibniz algebra if and only if $(A,\succ_{A},\prec_{A})$ is an admissible Novikov dialgebra. Moreover, if $(A,\succ_{A},\prec_{A})$ is an admissible Novikov dialgebra, then we have
\begin{eqnarray*}
&&(z\dashv_{A}y)\dashv_{A}x-(z\dashv_{A}x)\dashv_{A}y\\
&\overset{\eqref{eq:2algebra}}{=}&
x\prec_{A}(y\circ_{A}z)+x\prec_{A}(y\succ_{A}z)+2x\succ_{A}(y\circ_{A}z)
+2x\succ_{A}(y\succ_{A}z)-y\prec_{A}(x\circ_{A}z)\\
&&-y\prec_{A}(x\succ_{A}z)
-2y\succ_{A}(x\circ_{A}z)-2y\succ_{A}(x\succ_{A}z)\\
&\overset{\eqref{eq:anLei1},\eqref{eq:anLei equivalent}}{=}&
2(x\circ_{A}y)\prec_{A}z+(x\circ_{A}y)\succ_{A}z+x\prec_{A}(y\succ_{A}z)
+2x\succ_{A}(y\succ_{A}z)-y\prec_{A}(x\succ_{A}z)\\
&&-2y\succ_{A}(x\succ_{A}z)\\
&\overset{\eqref{eq:anLei2}}{=}&
2(x\circ_{A}y)\prec_{A}z+x\prec_{A}(y\succ_{A}z)-y\prec_{A}(x\succ_{A}z)
-(x\circ_{A}y)\succ_{A}z\\
&\overset{\eqref{eq:anLei equivalent}}{=}&
2(x\circ_{A}y)\prec_{A}z-x\prec_{A}(y\prec_{A}z)
+y\prec_{A}(x\prec_{A}z)\\
&\overset{\eqref{eq:anLei4},\eqref{eq:adm Nov dia2}}{=}&0.
\end{eqnarray*}
Thus \eqref{eq:nd4} holds. Similarly, \eqref{eq:nd5} holds.
Therefore, $(A,\vdash_{A},\dashv_{A})$ is a Novikov dialgebra.
\delete{
\begin{eqnarray*}
&&(x\triangleleft_{A}y)\triangleright_{A}z-x\triangleleft_{A}(y\triangleright_{A}z)\\
&\overset{\eqref{eq:2algebra}}{=}&
(x\circ_{A}y)\succ_{A}z+(x\succ_{A}y)\succ_{A}z+2(x\circ_{A}y)\prec_{A}z
+2(x\succ_{A}y)\prec_{A}z-x\prec_{A}(y\circ_{A}z)\\
&&-x\prec_{A}(y\prec_{A}z)
-2x\succ_{A}(y\circ_{A}z)-2x\succ_{A}(y\prec_{A}z)\\
&\overset{\eqref{eq:anLei1},\eqref{eq:anLei equivalent}}{=}&
(x\succ_{A}y)\succ_{A}z-y\prec_{A}(x\circ_{A}z)-2y\succ_{A}(x\circ_{A}z)
+2(x\succ_{A}y)\prec_{A}z-x\prec_{A}(y\prec_{A}z)\\
&&-2x\succ_{A}(y\prec_{A}z)\\
&\overset{\eqref{eq:anLei3}}{=}&
y\prec_{A}(x\circ_{A}z)+(x\succ_{A}y)\succ_{A}z-2y\succ_{A}(x\circ_{A}z)
-x\prec_{A}(y\prec_{A}z)\\
&\overset{\eqref{eq:adm Nov dia3}}{=}&
y\prec_{A}(x\circ_{A}z)+(x\succ_{A}y)\succ_{A}z-(y\succ_{A}x)\succ_{A}z
-x\prec_{A}(y\succ_{A}z)-x\prec_{A}(y\prec_{A}z)\\
&\overset{\eqref{eq:anLei equivalent},\eqref{eq:adm Nov dia1}}{=}&0,\\
&&(x\triangleleft_{A}y)\triangleright_{A}z+(y\triangleright_{A}x)\triangleright_{A}z\\
&\overset{\eqref{eq:2algebra}}{=}&
(x\prec_{A}y)\succ_{A}z+2(x\succ_{A}y)\succ_{A}z
+2(x\prec_{A}y)\prec_{A}z+4(x\succ_{A}y)\prec_{A}z
+(y\succ_{A}x)\succ_{A}z\\
&&+2(y\prec_{A}x)\succ_{A}z
+2(y\succ_{A}x)\prec_{A}z+4(y\prec_{A}x)\prec_{A}z\\
&\overset{\eqref{eq:anLei4},\eqref{eq:adm Nov dia1}}{=}&0.
\end{eqnarray*}}
\end{proof}

\begin{defi}\cite{MaSa}
	A {\bf compatible Leibniz algebra} is a triple $(A,\circ_{1_{A}},\circ_{2_{A}})$, such that $(A,\circ_{1_{A}})$
	and $(A,\circ_{2_{A}})$ are Leibniz algebras satisfying the following equation:
	{\small
		\begin{equation}
			x\circ_{2_{A}}(y\circ_{1_{A}}z)+x\circ_{1_{A}}(y\circ_{2_{A}}z)
			-(x\circ_{1_{A}}y)\circ_{2_{A}}z-(x\circ_{2_{A}}y)\circ_{1_{A}}z
			-y\circ_{2_{A}}(x\circ_{1_{A}}z)-y\circ_{1_{A}}(x\circ_{2_{A}}z)=0
			,\forall x,y,z\in A.
	\end{equation}}
\end{defi}

\begin{pro}\label{pro:comLei and anti}
	Let $(A,\succ_{A},\prec_{A})$ be an anti-pre-Leibniz algebra and
	$(A,\circ_{A})$ be the sub-adjacent Leibniz algebra. Then $(A\oplus A^*,\circ_{1_{d}},\circ_{2_{d}})$ with multiplications $\circ_{1_{d}},\circ_{2_{d}}:(A\oplus A^*)\otimes (A\oplus A^*)\rightarrow A\oplus A^*$ respectively defined by \eqref{eq:left inv2} and
	\begin{equation}
		(x+a^{*})\circ_{2_{d}}(y+b^{*})=x\circ_{A}y+\mathcal{L}^{*}_{\circ_{A}}(x)b^{*}
		-(\mathcal{L}^{*}_{\circ_{A}}
		+\mathcal{R}^{*}_{\circ_{A}})(y)a^{*},\;\forall x,y\in A,a^{*},b^{*}\in A^{*}
	\end{equation}
	is a compatible Leibniz algebra if and only if $(A,\succ_{A},\prec_{A})$ is an admissible Novikov dialgebra.
\end{pro}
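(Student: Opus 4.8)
The plan is to reduce the compatibility of the two products to a single representation condition. First note that both $(A\oplus A^{*},\circ_{1_{d}})$ and $(A\oplus A^{*},\circ_{2_{d}})$ are already Leibniz algebras: $\circ_{1_{d}}$ is the semidirect product $A\ltimes_{-\mathcal L^{*}_{\succ_{A}},\mathcal L^{*}_{\succ_{A}}+\mathcal R^{*}_{\prec_{A}}}A^{*}$ treated in Proposition \ref{pro:antipreLeibniz}, while $\circ_{2_{d}}$ is the semidirect product $A\ltimes_{\mathcal L^{*}_{\circ_{A}},-\mathcal L^{*}_{\circ_{A}}-\mathcal R^{*}_{\circ_{A}}}A^{*}$, which is a Leibniz algebra by Proposition \ref{pro:327} since $(\mathcal L^{*}_{\circ_{A}},-\mathcal L^{*}_{\circ_{A}}-\mathcal R^{*}_{\circ_{A}},A^{*})$ is a representation of $(A,\circ_{A})$ by Lemma \ref{ex:rep Leibniz} applied to the adjoint representation. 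Thus only the mixed compatibility identity needs to be addressed.

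Next I would use the elementary identity obtained by bilinear expansion: writing $\mathrm{Leib}_{\star}(x,y,z)=x\star(y\star z)-(x\star y)\star z-y\star(x\star z)$, one has $\mathrm{Leib}_{\star_{1}+\star_{2}}=\mathrm{Leib}_{\star_{1}}+\mathrm{Leib}_{\star_{2}}+C(\star_{1},\star_{2})$, where $C(\star_{1},\star_{2})(x,y,z)$ is precisely the left-hand side of the compatibility equation in the definition of a compatible Leibniz algebra. Hence, since $\circ_{1_{d}}$ and $\circ_{2_{d}}$ are Leibniz, $(A\oplus A^{*},\circ_{1_{d}},\circ_{2_{d}})$ is a compatible Leibniz algebra if and only if $(A\oplus A^{*},\circ_{d}:=\circ_{1_{d}}+\circ_{2_{d}})$ is a Leibniz algebra. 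Inspecting \eqref{eq:left inv2} and the formula for $\circ_{2_{d}}$, one checks that $\circ_{d}$ restricts to $2\circ_{A}$ on $A$, annihilates $A^{*}\otimes A^{*}$, and has mixed parts governed by $l=-\mathcal L^{*}_{\succ_{A}}+\mathcal L^{*}_{\circ_{A}}=\mathcal L^{*}_{\prec_{A}}$ and $r=(\mathcal L^{*}_{\succ_{A}}+\mathcal R^{*}_{\prec_{A}})-\mathcal L^{*}_{\circ_{A}}-\mathcal R^{*}_{\circ_{A}}=-\mathcal L^{*}_{\prec_{A}}-\mathcal R^{*}_{\succ_{A}}$; that is, $\circ_{d}$ is the semidirect product $A\ltimes_{l,r}A^{*}$ built over the Leibniz algebra $(A,2\circ_{A})$ (a scalar multiple of $(A,\circ_{A})$, hence Leibniz). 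By Proposition \ref{pro:327}, $(A\oplus A^{*},\circ_{d})$ is a Leibniz algebra if and only if $(\mathcal L^{*}_{\prec_{A}},-\mathcal L^{*}_{\prec_{A}}-\mathcal R^{*}_{\succ_{A}},A^{*})$ is a representation of $(A,2\circ_{A})$.

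It then remains to dualize the three representation axioms \eqref{eq:rep1}--\eqref{eq:rep3} for this triple. Pairing with $A$ via $\langle f^{*}(u^{*}),v\rangle=-\langle u^{*},f(v)\rangle$, using $\circ_{A}=\succ_{A}+\prec_{A}$, and rewriting $2(x\circ_{A}y)\prec_{A}z=2(x\prec_{A}y)\prec_{A}z-2(y\prec_{A}x)\prec_{A}z$ by \eqref{eq:anLei4}, I expect: \eqref{eq:rep1} becomes exactly \eqref{eq:adm Nov dia2}; \eqref{eq:rep3} becomes exactly \eqref{eq:adm Nov dia1}; and \eqref{eq:rep2}, after cancelling the $\prec_{A}$-only terms using \eqref{eq:adm Nov dia2} and then applying \eqref{eq:adm Nov dia1} once more, becomes exactly \eqref{eq:adm Nov dia3}. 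Since $(A,\succ_{A},\prec_{A})$ is an anti-pre-Leibniz algebra, \eqref{eq:anLei2}--\eqref{eq:anLei4} already hold, so the conjunction of the three representation axioms is equivalent to the conjunction of \eqref{eq:adm Nov dia1}--\eqref{eq:adm Nov dia3}, i.e., to $(A,\succ_{A},\prec_{A})$ being an admissible Novikov dialgebra; this closes the argument.

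The main obstacle is the bookkeeping in the dualization of \eqref{eq:rep2}: one must track the factor $2$ contributed by the $A$-component $2\circ_{A}$ (so that the coefficients match those appearing in \eqref{eq:adm Nov dia2} and \eqref{eq:adm Nov dia3}), and verify that the $\prec_{A}$-only terms cancel exactly against \eqref{eq:adm Nov dia2}, leaving an expression that reduces to \eqref{eq:adm Nov dia3} after one use of \eqref{eq:adm Nov dia1}; the dualizations of \eqref{eq:rep1} and \eqref{eq:rep3} are almost immediate. As an alternative to the $\circ_{d}$-reformulation, one may expand the compatibility equation directly on $A\oplus A^{*}$, where it splits by the number of $A^{*}$-inputs into the $A^{\otimes 3}$ case (twice the Leibniz identity for $\circ_{A}$, hence trivial), the cases with two or three $A^{*}$-inputs (all zero since $A^{*}$ is abelian in both products), and three operator identities on $A^{*}$ that dualize to the same three conditions.
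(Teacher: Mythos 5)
Your argument is correct, and it takes a genuinely different route from the paper. The paper proves the proposition by brute force: it expands the six-term compatibility expression on $A\oplus A^{*}$ in terms of the dual operators, uses the Leibniz identity of $\circ_{A}$ to kill the $A$-component, pairs with elements of $A$ to extract three identities (its \eqref{eq:1}--\eqref{eq:3}), and then reduces those to \eqref{eq:adm Nov dia1}--\eqref{eq:adm Nov dia3} using the anti-pre-Leibniz axioms (with the converse "obtained similarly"). You instead observe that, since $(A\oplus A^{*},\circ_{1_{d}})$ and $(A\oplus A^{*},\circ_{2_{d}})$ are already Leibniz, compatibility is equivalent to $\circ_{d}=\circ_{1_{d}}+\circ_{2_{d}}$ being Leibniz, identify $\circ_{d}$ as the semidirect product of $(A,2\circ_{A})$ by $A^{*}$ with actions $l=\mathcal L^{*}_{\prec_{A}}$ and $r=-\mathcal L^{*}_{\prec_{A}}-\mathcal R^{*}_{\succ_{A}}$ (your operator arithmetic here is right), and then invoke Proposition \ref{pro:327} to reduce everything to the three representation axioms, which you dualize. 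I checked the deferred bookkeeping and it comes out exactly as you predict: \eqref{eq:rep1} for $(A,2\circ_{A})$ dualizes to $x\prec_{A}(y\prec_{A}z)-y\prec_{A}(x\prec_{A}z)=2(x\circ_{A}y)\prec_{A}z$, which is \eqref{eq:adm Nov dia2} via \eqref{eq:anLei4}; \eqref{eq:rep3} dualizes (after renaming) to \eqref{eq:adm Nov dia1} with no extra input; and \eqref{eq:rep2}, after cancelling the pure-$\prec_{A}$ terms against \eqref{eq:adm Nov dia2} and one application of \eqref{eq:adm Nov dia1}, is \eqref{eq:adm Nov dia3}. What your route buys is conceptual economy and symmetry: the equivalence "compatible $\Leftrightarrow$ sum is Leibniz $\Leftrightarrow$ representation of $(A,2\circ_{A})$" handles both directions of the "if and only if" at once and replaces the paper's long display computation by Proposition \ref{pro:327} plus three short dualizations; what the paper's direct expansion buys is self-containedness and explicit intermediate identities. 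The only points worth making explicit in a written version are the trivial remark that $2\circ_{A}$ is again Leibniz (so Proposition \ref{pro:327} applies), and the factor $2$ tracking in \eqref{eq:rep1}--\eqref{eq:rep2}, both of which you flagged and handled correctly.
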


\begin{proof}
	By Lemma \ref{ex:rep Leibniz} and Proposition \ref{pro:antipreLeibniz}, $(A\oplus A^*,\circ_{1_{d}})$ and $(A\oplus A^*,\circ_{2_{d}})$ are Leibniz algebras.
	Let $x,y,z\in A,a^{*},b^{*},c^{*}\in A^{*}$, we have
	\begin{eqnarray*}
		&&(x+a^*)\circ_{2_{d}}\big((y+b^*)\circ_{1_{d}}(z+c^*)\big)
		+(x+a^*)\circ_{1_{d}}\big((y+b^*)\circ_{2_{d}}(z+c^*)\big)\\
		&&
		-\big((x+a^*)\circ_{1_{d}}(y+b^*)\big)\circ_{2_{d}}(z+c^*)
		-\big((x+a^*)\circ_{2_{d}}(y+b^*)\big)\circ_{1_{d}}(z+c^*)\\
		&&
		-(y+b^*)\circ_{2_{d}}\big((x+a^*)\circ_{1_{d}}(z+c^*)\big)
		-(y+b^*)\circ_{1_{d}}\big((x+a^*)\circ_{2_{d}}(z+c^*)\big)\\
		&=&2x\circ_{A}(y\circ_{A}z)
		-\mathcal{L}^{*}_{\circ_{A}}(x)\mathcal{L}^{*}_{\succ_{A}}(y)c^{*}
		+\mathcal{L}^{*}_{\circ_{A}}(x)(\mathcal{L}^{*}_{\succ_{A}}
		+\mathcal{R}^{*}_{\prec_{A}})(z)b^{*}
		-(\mathcal{L}^{*}_{\circ_{A}}
		+\mathcal{R}^{*}_{\circ_{A}})(y\circ_{A}z)a^{*}\\
		&&
		-\mathcal{L}^{*}_{\succ_{A}}(x)\mathcal{L}^{*}_{\circ_{A}}(y)c^{*}
		+\mathcal{L}^{*}_{\succ_{A}}(x)(\mathcal{L}^{*}_{\circ_{A}}
		+\mathcal{R}^{*}_{\circ_{A}})(z)b^{*}+(\mathcal{L}^{*}_{\succ_{A}}
		+\mathcal{R}^{*}_{\prec_{A}})(y\circ_{A}z)a^{*}
		-2(x\circ_{A}y)\circ_{A}z\\
		&&
		-\mathcal{L}^{*}_{\circ_{A}}(x\circ_{A}y)c^{*}
		-(\mathcal{L}^{*}_{\circ_{A}}
		+\mathcal{R}^{*}_{\circ_{A}})(z)\mathcal{L}^{*}_{\succ_{A}}(x)b^{*}
		+(\mathcal{L}^{*}_{\circ_{A}}
		+\mathcal{R}^{*}_{\circ_{A}})(z)(\mathcal{L}^{*}_{\succ_{A}}
		+\mathcal{R}^{*}_{\prec_{A}})(y)a^{*}\\
		&&
		+\mathcal{L}^{*}_{\succ_{A}}(x\circ_{A}y)c^{*}
		-(\mathcal{L}^{*}_{\succ_{A}}
		+\mathcal{R}^{*}_{\prec_{A}})(z)\mathcal{L}^{*}_{\circ_{A}}(x)b^{*}
		+(\mathcal{L}^{*}_{\succ_{A}}
		+\mathcal{R}^{*}_{\prec_{A}})(z)(\mathcal{L}^{*}_{\circ_{A}}
		+\mathcal{R}^{*}_{\circ_{A}})(y)a^{*}\\
		&&
		-2y\circ_{A}(x\circ_{A}z)
		+\mathcal{L}^{*}_{\circ_{A}}(y)\mathcal{L}^{*}_{\succ_{A}}(x)c^{*}
		-\mathcal{L}^{*}_{\circ_{A}}(y)(\mathcal{L}^{*}_{\succ_{A}}
		+\mathcal{R}^{*}_{\prec_{A}})(z)a^{*}
		+(\mathcal{L}^{*}_{\circ_{A}}
		+\mathcal{R}^{*}_{\circ_{A}})(x\circ_{A}z)b^{*}\\
		&&
		+\mathcal{L}^{*}_{\succ_{A}}(y)\mathcal{L}^{*}_{\circ_{A}}(x)c^{*}
		-\mathcal{L}^{*}_{\succ_{A}}(y)(\mathcal{L}^{*}_{\circ_{A}}
		+\mathcal{R}^{*}_{\circ_{A}})(z)a^{*}
		-(\mathcal{L}^{*}_{\succ_{A}}
		+\mathcal{R}^{*}_{\prec_{A}})(x\circ_{A}z)b^{*}\\
		&\overset{\eqref{eq:Leibniz}}{=}&
		-\mathcal{L}^{*}_{\circ_{A}}(x)\mathcal{L}^{*}_{\succ_{A}}(y)c^{*}
		+\mathcal{L}^{*}_{\circ_{A}}(x)(\mathcal{L}^{*}_{\succ_{A}}
		+\mathcal{R}^{*}_{\prec_{A}})(z)b^{*}
		-(\mathcal{L}^{*}_{\circ_{A}}
		+\mathcal{R}^{*}_{\circ_{A}})(y\circ_{A}z)a^{*}
		-\mathcal{L}^{*}_{\succ_{A}}(x)\mathcal{L}^{*}_{\circ_{A}}(y)c^{*}\\
		&&
		+\mathcal{L}^{*}_{\succ_{A}}(x)(\mathcal{L}^{*}_{\circ_{A}}
		+\mathcal{R}^{*}_{\circ_{A}})(z)b^{*}+(\mathcal{L}^{*}_{\succ_{A}}
		+\mathcal{R}^{*}_{\prec_{A}})(y\circ_{A}z)a^{*}
		-\mathcal{L}^{*}_{\circ_{A}}(x\circ_{A}y)c^{*}\\
		&&
		-(\mathcal{L}^{*}_{\circ_{A}}
		+\mathcal{R}^{*}_{\circ_{A}})(z)\mathcal{L}^{*}_{\succ_{A}}(x)b^{*}
		+(\mathcal{L}^{*}_{\circ_{A}}
		+\mathcal{R}^{*}_{\circ_{A}})(z)(\mathcal{L}^{*}_{\succ_{A}}
		+\mathcal{R}^{*}_{\prec_{A}})(y)a^{*}
		+\mathcal{L}^{*}_{\succ_{A}}(x\circ_{A}y)c^{*}\\
		&&
		-(\mathcal{L}^{*}_{\succ_{A}}
		+\mathcal{R}^{*}_{\prec_{A}})(z)\mathcal{L}^{*}_{\circ_{A}}(x)b^{*}
		+(\mathcal{L}^{*}_{\succ_{A}}
		+\mathcal{R}^{*}_{\prec_{A}})(z)(\mathcal{L}^{*}_{\circ_{A}}
		+\mathcal{R}^{*}_{\circ_{A}})(y)a^{*}
		+\mathcal{L}^{*}_{\circ_{A}}(y)\mathcal{L}^{*}_{\succ_{A}}(x)c^{*}\\
		&&
		-\mathcal{L}^{*}_{\circ_{A}}(y)(\mathcal{L}^{*}_{\succ_{A}}
		+\mathcal{R}^{*}_{\prec_{A}})(z)a^{*}
		+(\mathcal{L}^{*}_{\circ_{A}}
		+\mathcal{R}^{*}_{\circ_{A}})(x\circ_{A}z)b^{*}
		+\mathcal{L}^{*}_{\succ_{A}}(y)\mathcal{L}^{*}_{\circ_{A}}(x)c^{*}\\
		&&
		-\mathcal{L}^{*}_{\succ_{A}}(y)(\mathcal{L}^{*}_{\circ_{A}}
		+\mathcal{R}^{*}_{\circ_{A}})(z)a^{*}
		-(\mathcal{L}^{*}_{\succ_{A}}
		+\mathcal{R}^{*}_{\prec_{A}})(x\circ_{A}z)b^{*}.
	\end{eqnarray*}
	Suppose that $(A\oplus A^*,\circ_{1_{d}},\circ_{2_{d}})$
	is a compatible Leibniz algebra. Then we have
	{\small
	\begin{eqnarray*}
		0&=&\langle -\mathcal{L}^{*}_{\circ_{A}}(x)\mathcal{L}^{*}_{\succ_{A}}(y)c^{*}
		-\mathcal{L}^{*}_{\succ_{A}}(x)\mathcal{L}^{*}_{\circ_{A}}(y)c^{*}
		-\mathcal{L}^{*}_{\circ_{A}}(x\circ_{A}y)c^{*}
		+\mathcal{L}^{*}_{\succ_{A}}(x\circ_{A}y)c^{*}\\
		&&
		+\mathcal{L}^{*}_{\circ_{A}}(y)\mathcal{L}^{*}_{\succ_{A}}(x)c^{*}
		+\mathcal{L}^{*}_{\succ_{A}}(y)\mathcal{L}^{*}_{\circ_{A}}(x)c^{*},z\rangle\\
		&=&\langle c^{*},-y\succ_{A}(x\circ_{A}z)-y\circ_{A}(x\succ_{A}z)
		+(x\circ_{A}y)\prec_{A}z+x\succ_{A}(y\circ_{A}z)
		+x\circ_{A}(y\succ_{A}z)\rangle,\\
		0&=&\langle \mathcal{L}^{*}_{\circ_{A}}(x)(\mathcal{L}^{*}_{\succ_{A}}
		+\mathcal{R}^{*}_{\prec_{A}})(z)b^{*}
		+\mathcal{L}^{*}_{\succ_{A}}(x)(\mathcal{L}^{*}_{\circ_{A}}
		+\mathcal{R}^{*}_{\circ_{A}})(z)b^{*}
		-(\mathcal{L}^{*}_{\circ_{A}}
		+\mathcal{R}^{*}_{\circ_{A}})(z)\mathcal{L}^{*}_{\succ_{A}}(x)b^{*}\\
		&&
		-(\mathcal{L}^{*}_{\succ_{A}}
		+\mathcal{R}^{*}_{\prec_{A}})(z)\mathcal{L}^{*}_{\circ_{A}}(x)b^{*}
		+(\mathcal{L}^{*}_{\circ_{A}}
		+\mathcal{R}^{*}_{\circ_{A}})(x\circ_{A}z)b^{*}
		-(\mathcal{L}^{*}_{\succ_{A}}
		+\mathcal{R}^{*}_{\prec_{A}})(x\circ_{A}z)b^{*},y\rangle\\
		&=&\langle b^{*},z\succ_{A}(x\circ_{A}y)+(x\circ_{A}y)\prec_{A}z
		+z\circ_{A}(x\succ_{A}y)+(x\succ_{A}y)\circ_{A}z
		-x\succ_{A}(z\circ_{A}y)\\
		&&
		-x\succ_{A}(y\circ_{A}z)
		-x\circ_{A}(z\succ_{A}y)-x\circ_{A}(y\prec_{A}z)
		-(x\circ_{A}z)\prec_{A}y-y\succ_{A}(x\circ_{A}z)\rangle,\\
		0&=&\langle-(\mathcal{L}^{*}_{\circ_{A}}
		+\mathcal{R}^{*}_{\circ_{A}})(y\circ_{A}z)a^{*}
		+(\mathcal{L}^{*}_{\succ_{A}}
		+\mathcal{R}^{*}_{\prec_{A}})(y\circ_{A}z)a^{*}
		+(\mathcal{L}^{*}_{\circ_{A}}
		+\mathcal{R}^{*}_{\circ_{A}})(z)(\mathcal{L}^{*}_{\succ_{A}}
		+\mathcal{R}^{*}_{\prec_{A}})(y)a^{*}\\
		&&
		+(\mathcal{L}^{*}_{\succ_{A}}
		+\mathcal{R}^{*}_{\prec_{A}})(z)(\mathcal{L}^{*}_{\circ_{A}}
		+\mathcal{R}^{*}_{\circ_{A}})(y)a^{*}
		-\mathcal{L}^{*}_{\circ_{A}}(y)(\mathcal{L}^{*}_{\succ_{A}}
		+\mathcal{R}^{*}_{\prec_{A}})(z)a^{*}
		-\mathcal{L}^{*}_{\succ_{A}}(y)(\mathcal{L}^{*}_{\circ_{A}}
		+\mathcal{R}^{*}_{\circ_{A}})(z)a^{*},x\rangle\\
		&=&\langle a^{*},(y\circ_{A}z)\prec_{A}x+x\succ_{A}(y\circ_{A}z)
		+y\succ_{A}(z\circ_{A}x)+y\succ_{A}(x\circ_{A}z)
		+(z\circ_{A}x)\prec_{A}y\\
		&&
		+(x\circ_{A}z)\prec_{A}y
		+y\circ_{A}(z\succ_{A}x)+y\circ_{A}(x\prec_{A}z)
		+(z\succ_{A}x)\circ_{A}y+(x\prec_{A}z)\circ_{A}y\\
		&&
		-z\succ_{A}(y\circ_{A}x)-(y\circ_{A}x)\prec_{A}z
		-z\circ_{A}(y\succ_{A}x)-(y\succ_{A}x)\circ_{A}z\rangle.
	\end{eqnarray*}}Thus we obtain the following equations:
{\small
	\begin{eqnarray}
		0&=&-y\succ_{A}(x\circ_{A}z)-y\circ_{A}(x\succ_{A}z)
		+(x\circ_{A}y)\prec_{A}z+x\succ_{A}(y\circ_{A}z)
		+x\circ_{A}(y\succ_{A}z),\label{eq:1}\\
		0&=&z\succ_{A}(x\circ_{A}y)+(x\circ_{A}y)\prec_{A}z
		+z\circ_{A}(x\succ_{A}y)+(x\succ_{A}y)\circ_{A}z
		-x\succ_{A}(z\circ_{A}y)\nonumber\\
		&&
		-x\succ_{A}(y\circ_{A}z)
		-x\circ_{A}(z\succ_{A}y)-x\circ_{A}(y\prec_{A}z)
		-(x\circ_{A}z)\prec_{A}y-y\succ_{A}(x\circ_{A}z)\nonumber\\
		&\overset{\eqref{eq:1}}{=}&
		(x\circ_{A}y)\prec_{A}z+(x\succ_{A}y)\circ_{A}z
		-x\succ_{A}(y\circ_{A}z)-x\circ_{A}(y\prec_{A}z)
		-y\succ_{A}(x\circ_{A}z),\label{eq:2}\\
		0&=&(y\circ_{A}z)\prec_{A}x+x\succ_{A}(y\circ_{A}z)
		+y\succ_{A}(z\circ_{A}x)+y\succ_{A}(x\circ_{A}z)
		+(z\circ_{A}x)\prec_{A}y\nonumber\\
		&&
		+(x\circ_{A}z)\prec_{A}y
		+y\circ_{A}(z\succ_{A}x)+y\circ_{A}(x\prec_{A}z)
		+(z\succ_{A}x)\circ_{A}y+(x\prec_{A}z)\circ_{A}y\nonumber\\
		&&
		-z\succ_{A}(y\circ_{A}x)-(y\circ_{A}x)\prec_{A}z
		-z\circ_{A}(y\succ_{A}x)-(y\succ_{A}x)\circ_{A}z\nonumber\\
		&\overset{\eqref{eq:1},\eqref{eq:2}}{=}&
		(z\circ_{A}x)\prec_{A}y+(x\circ_{A}z)\prec_{A}y
		+(z\succ_{A}x)\circ_{A}y+(x\prec_{A}z)\circ_{A}y.\label{eq:3}
	\end{eqnarray}}Since $(A,\succ_{A},\prec_{A})$ is an anti-pre-Leibniz algebra,
	we rewrite \eqref{eq:1}-\eqref{eq:3} as
	\begin{eqnarray*}
		0&=&-y\succ_{A}(x\circ_{A}z)-y\circ_{A}(x\succ_{A}z)
		+(x\circ_{A}y)\prec_{A}z+x\succ_{A}(y\circ_{A}z)
		+x\circ_{A}(y\succ_{A}z)\\
		&\overset{\eqref{eq:anLei1}}{=}&
		2(x\circ_{A}y)\prec_{A}z-y\circ_{A}(x\succ_{A}z)
		+x\circ_{A}(y\succ_{A}z)\\
		&\overset{\eqref{eq:anLei2}}{=}&
		2(x\circ_{A}y)\prec_{A}z-y\prec_{A}(x\succ_{A}z)
		+x\prec_{A}(y\succ_{A}z)-(x\circ_{A}y)\succ_{A}z\\
		&\overset{\eqref{eq:anLei equivalent}}{=}&
		2(x\circ_{A}y)\prec_{A}z-x\prec_{A}(y\prec_{A}z)
		+y\prec_{A}(x\prec_{A}z)\\
		&\overset{\eqref{eq:anLei4}}{=}&
		2(x\prec_{A}y)\prec_{A}z-2(y\prec_{A}x)\prec_{A}z
		-x\prec_{A}(y\prec_{A}z)+y\prec_{A}(x\prec_{A}z),\\
		0&=&(x\circ_{A}y)\prec_{A}z+(x\succ_{A}y)\circ_{A}z
		-x\succ_{A}(y\circ_{A}z)-x\circ_{A}(y\prec_{A}z)
		-y\succ_{A}(x\circ_{A}z)\\
		&\overset{\eqref{eq:anLei1}}{=}&
		-2y\succ_{A}(x\circ_{A}z)+(x\succ_{A}y)\circ_{A}z
		-x\circ_{A}(y\prec_{A}z)\\
		&\overset{\eqref{eq:anLei3}}{=}&
		-2y\succ_{A}(x\circ_{A}z)+(x\succ_{A}y)\succ_{A}z
		+y\prec_{A}(x\circ_{A}z)-x\prec_{A}(y\prec_{A}z)\\
		&\overset{\eqref{eq:anLei equivalent}}{=}&
		-2y\succ_{A}(x\circ_{A}z)-(x\prec_{A}y)\succ_{A}z
		+x\prec_{A}(y\succ_{A}z),\\
		0&=&(z\circ_{A}x)\prec_{A}y+(x\circ_{A}z)\prec_{A}y
		+(z\succ_{A}x)\circ_{A}y+(x\prec_{A}z)\circ_{A}y\\
		&\overset{\eqref{eq:anLei4}}{=}&
		(z\succ_{A}x)\succ_{A}y+(x\prec_{A}z)\succ_{A}y,
	\end{eqnarray*}
	that is, \eqref{eq:adm Nov dia1}-\eqref{eq:adm Nov dia3} hold. Therefore $(A,\succ_{A},\prec_{A})$ is an admissible Novikov dialgebra. The converse side is obtained similarly.
\end{proof}

With a similar proof of Proposition \ref{pro:comLei and anti},
we have the following conclusion.

\begin{pro}\label{pro:comLei and preLei}
Let $(A,\triangleright_{A},\triangleleft_{A})$ be a pre-Leibniz algebra and $(A,\bullet_{A})$ be the sub-adjacent Leibniz algebra. Then $(A\oplus A^*,\bullet_{1_{d}},\bullet_{2_{d}})$ with multiplications $\bullet_{1_{d}},\bullet_{2_{d}}:(A\oplus A^*)\otimes (A\oplus A^*)\rightarrow A\oplus A^*$ respectively defined by
	\begin{eqnarray}
		&&(x+a^{*})\bullet_{1_{d}}(y+b^{*})=x\bullet_{A}y
		+\mathcal{L}^{*}_{\triangleright_{A}}(x)b^{*} -(\mathcal{L}^{*}_{\triangleright_{A}}
		+\mathcal{R}^{*}_{\triangleleft_{A}})(y)a^{*},\\
		&&(x+a^{*})\bullet_{2_{d}}(y+b^{*})=x\bullet_{A}y
		+\mathcal{L}^{*}_{\bullet_{A}}(x)b^{*}
		-(\mathcal{L}^{*}_{\bullet_{A}}
		+\mathcal{R}^{*}_{\bullet_{A}})(y)a^{*},\;\forall x,y\in A,a^{*},b^{*}\in A^{*}
	\end{eqnarray}
	is a compatible Leibniz algebra if and only if the transformed pre-Leibniz algebra  $(A,\vdash_{A},\dashv_{A})$ is a Novikov dialgebra.
\end{pro}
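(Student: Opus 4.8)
The plan is to closely follow the argument of Proposition~\ref{pro:comLei and anti}, with the anti-pre-Leibniz data replaced by the pre-Leibniz data. First I would note that $(A\oplus A^*,\bullet_{1_d})$ and $(A\oplus A^*,\bullet_{2_d})$ are already Leibniz algebras: since $(A,\triangleright_A,\triangleleft_A)$ is a pre-Leibniz algebra, $(\mathcal{L}_{\triangleright_A},\mathcal{R}_{\triangleleft_A},A)$ is a representation of the sub-adjacent Leibniz algebra $(A,\bullet_A)$, so by Lemma~\ref{ex:rep Leibniz} so is $(\mathcal{L}^*_{\triangleright_A},-\mathcal{L}^*_{\triangleright_A}-\mathcal{R}^*_{\triangleleft_A},A^*)$, and $\bullet_{1_d}$ is exactly the associated semidirect product of Proposition~\ref{pro:327}; likewise $\bullet_{2_d}$ is the semidirect product attached, again via Lemma~\ref{ex:rep Leibniz}, to the dual $(\mathcal{L}^*_{\bullet_A},-\mathcal{L}^*_{\bullet_A}-\mathcal{R}^*_{\bullet_A},A^*)$ of the adjoint representation of $(A,\bullet_A)$. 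Hence only the compatibility identity has to be analyzed.

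Next I would expand the compatibility expression on $X=x+a^*$, $Y=y+b^*$, $Z=z+c^*$. Since a product of two elements of $A^*$ vanishes and a product of an element of $A$ with an element of $A^*$ lies in $A^*$, this expression is multilinear and splits according to the number of starred arguments: the $A$-valued part equals $2\big(x\bullet_A(y\bullet_A z)-(x\bullet_A y)\bullet_A z-y\bullet_A(x\bullet_A z)\big)$, which vanishes by the Leibniz identity for $\bullet_A$, while the $A^*$-valued part is a sum of three pieces, each linear in exactly one of $a^*$, $b^*$, $c^*$. Pairing each piece against a test vector of $A$ then yields, exactly as in the derivation of \eqref{eq:1}--\eqref{eq:3} in Proposition~\ref{pro:comLei and anti}, three identities in $A$ written through $\triangleright_A$, $\triangleleft_A$ and $\bullet_A$.

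It remains to simplify these three identities. Using the pre-Leibniz axioms \eqref{eq:pre-L1}--\eqref{eq:pre-L3} --- which by Proposition~\ref{pro:transformed} are precisely \eqref{eq:nd1}--\eqref{eq:nd3} for $(A,\vdash_A,\dashv_A)$ --- I expect nearly all terms to cancel, so that after rewriting via $\triangleright_A=\vdash_A$ and $x\triangleleft_A y=-y\dashv_A x$ the surviving relations are exactly \eqref{eq:nd4} and \eqref{eq:nd5}. Consequently $(A\oplus A^*,\bullet_{1_d},\bullet_{2_d})$ is a compatible Leibniz algebra if and only if \eqref{eq:nd1}--\eqref{eq:nd5} all hold for $(A,\vdash_A,\dashv_A)$, that is, if and only if $(A,\vdash_A,\dashv_A)$ is a Novikov dialgebra; the converse implication is the reverse reading of the same chain of equivalences. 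The main obstacle is purely computational: as in Proposition~\ref{pro:comLei and anti}, the expansion involves a large number of terms carrying dual (starred) operators, and the delicate point is to organize the cancellations so that exactly the Novikov-dialgebra conditions \eqref{eq:nd4} and \eqref{eq:nd5} not already forced by the pre-Leibniz structure survive.
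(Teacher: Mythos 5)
Your proposal is correct and follows exactly the route the paper takes: the paper proves this proposition simply by remarking that it is "with a similar proof of Proposition \ref{pro:comLei and anti}", i.e.\ the two Leibniz structures come from the dual representations $(\mathcal{L}^{*}_{\triangleright_{A}},-\mathcal{L}^{*}_{\triangleright_{A}}-\mathcal{R}^{*}_{\triangleleft_{A}},A^{*})$ and $(\mathcal{L}^{*}_{\bullet_{A}},-\mathcal{L}^{*}_{\bullet_{A}}-\mathcal{R}^{*}_{\bullet_{A}},A^{*})$, the compatibility condition is expanded and paired against test vectors to yield three identities in $A$, and the pre-Leibniz axioms (equivalently \eqref{eq:nd1}--\eqref{eq:nd3} via Proposition \ref{pro:transformed}) reduce these to precisely \eqref{eq:nd4} and \eqref{eq:nd5}. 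Your identification of which Novikov-dialgebra conditions survive matches the paper's computation, so apart from carrying out the cancellations explicitly there is nothing missing.
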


Recall that a {\bf Gel'fand-Dorfman algebra} \cite{Gel} (or a {\bf GD algebra} in short) is a triple $(A$,
$[-,-]_{A},$
$\ast_{A})$, such that $(A,[-,-]_{A})$ is a Lie algebra, $(A,\ast_{A})$ is a Novikov algebra, and the following equation holds:
\begin{equation}\label{eq:GD alg}
[x\ast_{A}y,z]_{A}-[x\ast_{A}z,y]_{A}+[x,y]_{A}\ast_{A}z
-[x,z]_{A}\ast_{A}y-x\ast_{A}[y,z]_{A}=0,\;\forall x,y,z\in A.
\end{equation}
Now we introduce the notion of a Gel'fand-Dorfman dialgebra.

\begin{defi}
A {\bf Gel'fand-Dorfman dialgebra} (or a {\bf GD dialgebra} in short) is a quadruple $(A,\circ_{A},\vdash_{A},\dashv_{A})$, such that $(A,\circ_{A})$ is a Leibniz algebra, $(A,\vdash_{A},\dashv_{A})$ is a Novikov dialgebra, and the following equations hold:
\begin{align}
&x\vdash_{A}(y\circ_{A}z)-(x\vdash_{A}y)\circ_{A}z
-(x\circ_{A}y)\vdash_{A}z-y\circ_{A}(x\vdash_{A}z)
+ (x\circ_{A}z)\dashv_{A} y=0,\label{eq:GD dia1}\\
&x\circ_{A}(y\vdash_{A}z)-(y\circ_{A}z)\dashv_{A}x
+( y\dashv_{A}x )\circ_{A}z-(x\circ_{A}y)\vdash_{A}z
-y\vdash_{A}(x\circ_{A}z)=0,\label{eq:GD dia2}\\
&x\circ_{A}(z\dashv_{A}y)+ (y\circ_{A}z)\dashv_{A}x
-z\dashv_{A}(x\circ_{A}y) -y\circ_{A}(z\dashv_{A}x)
- (x\circ_{A}z)\dashv_{A}y=0,
\label{eq:GD dia3}
\end{align}
for all $x,y,z\in A$.
\end{defi}


\begin{ex}
\begin{enumerate}
	\item Let $(A,\vdash_{A},\dashv_{A})$ be a Novikov dialgebra. Then $(A,\vdash_{A},\dashv_{A})$ is also a transformed pre-Leibniz algebra. Hence $(A,\circ_{A})$ is a Leibniz algebra, where
    \begin{eqnarray*}
    x\circ_{A}y=x\vdash_{A}y-y\dashv_{A}x,\;\forall x,y\in A.
    \end{eqnarray*}
Moreover, for all $x,y,z\in A$, we have
\begin{eqnarray*}
&&x\vdash_{A}(y\circ_{A}z)-(x\vdash_{A}y)\circ_{A}z
-(x\circ_{A}y)\vdash_{A}z-y\circ_{A}(x\vdash_{A}z)
+(x\circ_{A}z)\dashv_{A}y\\
&\overset{\eqref{eq:transformed}}{=}&x\vdash_{A}(y\vdash_{A}z)-x\vdash_{A}(z\dashv_{A}y)
-2(x\vdash_{A}y)\vdash_{A}z+z\dashv_{A}(x\vdash_{A}y)
+(y\dashv_{A}x)\vdash_{A}z\\
&&-y\vdash_{A}(x\vdash_{A}z)
+2(x\vdash_{A}z)\dashv_{A}y-(z\dashv_{A}x)\dashv_{A}y\\
&\overset{\eqref{eq:nd1},\eqref{eq:nd2},\eqref{eq:nd5}}{=}&0.
\end{eqnarray*}
Hence \eqref{eq:GD dia1} holds. Similarly, \eqref{eq:GD dia2} and \eqref{eq:GD dia3} hold. Thus $(A,\circ_{A},\vdash_{A},\dashv_{A})$ is a GD dialgebra.
	\item Let $P$ be an averaging operator on a GD algebra
	$(A,[-,-]_{A},\ast_{A})$. Suppose that $\circ_{A},\vdash_{A}$,
	$\dashv_{A}:A\otimes A\rightarrow A$ are multiplications given by
\begin{equation}\label{eq:av} x\circ_{A}y=[P(x),y]_{A},\;x\vdash_{A}y=P(x)\ast_{A}y,\;
x\dashv_{A}y=x\ast_{A}P(y),\;\forall x,y\in A.
\end{equation}
Then for all $x,y,z\in A$, we have
\begin{eqnarray*}
&&x\vdash_{A}(y\circ_{A}z)-(x\vdash_{A}y)\circ_{A}z
-(x\circ_{A}y)\vdash_{A}z-y\circ_{A}(x\vdash_{A}z)
+(x\circ_{A}z)\dashv_{A}y\\
&\overset{\eqref{eq:av}}{=}&P(x)\ast_{A}[P(y),z]_{A}
-[P\big(P(x)\ast_{A}y\big),z]_{A}-P[P(x),y]_{A}\ast_{A}z
-[P(y),P(x)\ast_{A}z]_{A}\\
&&+[P(x),z]_{A}\ast_{A}P(y)\\
&\overset{\eqref{eq:Ao}}{=}&P(x)\ast_{A}[P(y),z]_{A}
-[P(x)\ast_{A}P(y),z]_{A}-[P(x),P(y)]_{A}\ast_{A}z
-[P(y),P(x)\ast_{A}z]_{A}\\
&&+[P(x),z]_{A}\ast_{A}P(y)\\
&\overset{\eqref{eq:GD alg}}{=}&0.
\end{eqnarray*}
Hence \eqref{eq:GD dia1} holds. Similarly, \eqref{eq:GD dia2} and \eqref{eq:GD dia3} hold. It is known that $(A,\circ_{A})$ defined by \eqref{eq:av} is a Leibniz algebra and $(A,\vdash_{A},\dashv_{A})$ defined by \eqref{eq:av} is a Novikov dialgebra.
Therefore $(A,\circ_{A},\vdash_{A},\dashv_{A})$ is a GD dialgebra.
\end{enumerate}
\end{ex}

\begin{pro}\label{pro:LN and Leib}
	Let $(A,\circ_{A},\vdash_{A},\dashv_{A})$ be a GD dialgebra with a derivation $P$. Define a multiplication
	$\cdot_{A}:A\otimes A\rightarrow A$ by
	\begin{equation}\label{eq:derivation LN} x\cdot_{A}y=x\circ_{A}y+P(x)\vdash_{A}y-P(y)\dashv_{A}x,\;\forall x,y\in A.
	\end{equation}
	Then $(A,\cdot_{A})$ is a Leibniz algebra.
\end{pro}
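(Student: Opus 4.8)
The plan is to verify the Leibniz identity \eqref{eq:Leibniz} for $\cdot_{A}$ directly, that is, to show
\[
\mathrm{Leib}(x,y,z):=x\cdot_{A}(y\cdot_{A}z)-(x\cdot_{A}y)\cdot_{A}z-y\cdot_{A}(x\cdot_{A}z)=0
\]
for all $x,y,z\in A$. First I would substitute the defining formula \eqref{eq:derivation LN} into each of the six occurrences of $\cdot_{A}$. Because the inner $\cdot_{A}$-products re-enter through $P$ (the terms $P(y\cdot_{A}z)$, $P(x\cdot_{A}y)$, $P(x\cdot_{A}z)$ appear), I would then use that $P$ is a derivation of $\circ_{A}$, $\vdash_{A}$ and $\dashv_{A}$, i.e.\ that \eqref{eq:der} holds for each of the three products, to push these $P$'s onto the individual arguments. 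Since every summand of \eqref{eq:derivation LN} carries at most one $P$, after this expansion each term of $\mathrm{Leib}(x,y,z)$ has been produced by applying $P$ a total of $0$, $1$ or $2$ times, and the key idea is that these three homogeneous components can be handled separately, each by a different family of axioms.

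The $P$-degree $0$ component is exactly $x\circ_{A}(y\circ_{A}z)-(x\circ_{A}y)\circ_{A}z-y\circ_{A}(x\circ_{A}z)$, which vanishes since $(A,\circ_{A})$ is a Leibniz algebra. For the $P$-degree $1$ component I would sort the terms by which of the three slots the single $P$ occupies. The terms in which $P$ hits the first argument assemble, after naming that argument $P(x)$, into the left-hand side of \eqref{eq:GD dia1} evaluated at $(P(x),y,z)$; those in which $P$ hits the second argument into \eqref{eq:GD dia2} evaluated at $(x,P(y),z)$; and those in which $P$ hits the third argument into the negative of \eqref{eq:GD dia3} evaluated at $(x,y,P(z))$. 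Hence the $P$-degree $1$ component is a sum of three instances of the Gel'fand--Dorfman dialgebra compatibility identities and vanishes.

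The $P$-degree $2$ component is where the real work lies, and I expect the bookkeeping there to be the main obstacle, the more so because the derivation rule applied twice also produces genuine second-order terms $P^{2}(x)$, $P^{2}(y)$, $P^{2}(z)$. By multilinearity each such term carries $P$ either once on two distinct arguments or twice on a single argument, which gives six patterns. I would check: the group with $P$ on the first and second arguments is an instance of \eqref{eq:nd1} with the $P$-images substituted; the group with $P$ on the first and third arguments and the group with $P$ on the second and third arguments are instances of \eqref{eq:nd2}, the latter only after rewriting one summand by means of \eqref{eq:nd3}; the two groups carrying a single $P^{2}$ on the first or second argument are instances of \eqref{eq:nd5}; and the group carrying $P^{2}$ on the third argument is an instance of \eqref{eq:nd4}. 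Thus every homogeneous component of $\mathrm{Leib}(x,y,z)$ vanishes, so $(A,\cdot_{A})$ is a Leibniz algebra. Apart from the sheer volume of terms, the only genuine subtlety is noticing that one of the mixed second-order groups is not literally an instance of \eqref{eq:nd2} until \eqref{eq:nd3} is applied first.
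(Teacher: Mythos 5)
Your proposal is correct and follows essentially the same route as the paper: a direct expansion of the Leibniz identity for $\cdot_{A}$ using the derivation property of $P$ on all three products, with the $P$-degree $0$ part killed by \eqref{eq:Leibniz}, the degree $1$ part by \eqref{eq:GD dia1}--\eqref{eq:GD dia3} evaluated at $P(x)$, $P(y)$, $P(z)$ respectively, the $P^{2}$-terms by \eqref{eq:nd4}--\eqref{eq:nd5}, and the mixed second-order terms by \eqref{eq:nd1}--\eqref{eq:nd3}. Your groupings and attributions of axioms (including the observation that the $(y,z)$-mixed group needs \eqref{eq:nd3} on top of \eqref{eq:nd2}) all check out against the paper's computation.
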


\begin{proof}
	For all $x,y,z\in A$, we have
	{\small
	\begin{eqnarray*}
		x\cdot_{A}(y\cdot_{A}z)
		&=&x\cdot_{A}\big(y\circ_{A}z+P(y)\vdash_{A}z-P(z)\dashv_{A}y\big)\\
		&=&x\circ_{A}\big(y\circ_{A}z+P(y)\vdash_{A}z- P(z)\dashv_{A}y\big)
		+P(x)\vdash_{A}\big(y\circ_{A}z+P(y)\vdash_{A}z
		-P(z)\dashv_{A}y\big)\\
		&& 
		-P\big(y\circ_{A}z+P(y)\vdash_{A}z
		-P(z)\dashv_{A}y\big)\dashv_{A}x\\
		&=&x\circ_{A}(y\circ_{A}z)+x\circ_{A}\big(P(y)\vdash_{A}z\big)-
		x\circ_{A}\big(P(z)\dashv_{A}y\big)
		+P(x)\vdash_{A}(y\circ_{A}z)\\
		&&
		+P(x)\vdash_{A}\big(P(y)\vdash_{A}z\big)
		-P(x)\vdash_{A}\big(P(z)\dashv_{A}y\big)
		-\Big(P\big(P(y)\big)\vdash_{A}z\Big)\dashv_{A}x\\
		&&
		-\big(P(y)\circ_{A}z\big)\dashv_{A}x 
		-\big(y\circ_{A}P(z)\big)\dashv_{A}x
		-\big(P(y)\vdash_{A}P(z)\big)\dashv_{A}x
		+\big(P(z)\dashv_{A}P(y)\big)\dashv_{A}x\\
		&&
		+\Big(P\big(P(z)\big)\dashv_{A}y\Big)\dashv_{A}x,\\
		(x\cdot_{A}y)\cdot_{A}z
		&=&\big(x\circ_{A}y+P(x)\vdash_{A}y
-P(y)\dashv_{A}x\big)\cdot_{A}z\\
		&=&\big(x\circ_{A}y+P(x)\vdash_{A}y
-P(y)\dashv_{A}x\big)\circ_{A}z
		+P\big(x\circ_{A}y+P(x)\vdash_{A}y
-P(y)\dashv_{A}x\big)\vdash_{A}z\\
		&& 
		-P(z)\dashv_{A}\big(x\circ_{A}y+P(x)\vdash_{A}y
-P(y)\dashv_{A}x\big)\\
		&=&(x\circ_{A}y)\circ_{A}z+\big(P(x)\vdash_{A}y\big)\circ_{A}z
		-\big(P(y)\dashv_{A}x\big)\circ_{A}z
		+\big(P(x)\circ_{A}y\big)\vdash_{A}z\\
		&& 
		+\big(x\circ_{A}P(y)\big)\vdash_{A}z
		+\big(P(x)\vdash_{A}P(y)\big)\vdash_{A}z
		+\Big(P\big(P(x)\big)\vdash_{A}y\Big)\vdash_{A}z
		\\
		&&
		-\big(P(y)\dashv_{A}P(x)\big)\vdash_{A}z
		-\Big(P\big(P(y)\big)\dashv_{A}x\Big)\vdash_{A}z
		-P(z)\dashv_{A}(x\circ_{A}y)\\
		&&
		-P(z)\dashv_{A}\big(P(x)\vdash_{A}y\big)
		+P(z)\dashv_{A}\big(P(y)\dashv_{A}x\big),\\
		y\cdot_{A}(x\cdot_{A}z)
		&=&y\circ_{A}(x\circ_{A}z)+y\circ_{A}\big(P(x)\vdash_{A}z\big)
		-y\circ_{A}\big(P(z)\dashv_{A}x\big)
		+P(y)\vdash_{A}(x\circ_{A}z)\\
		&& 
		+P(y)\vdash_{A}\big(P(x)\vdash_{A}z\big)
		-P(y)\vdash_{A}\big(P(z)\dashv_{A}x\big)
		-\Big(P\big(P(x)\big)\vdash_{A}z\Big)\dashv_{A}y\\
		&&
		-\big(P(x)\circ_{A}z\big)\dashv_{A}y 
		-\big(x\circ_{A}P(z)\big)\dashv_{A}y
		-\big(P(x)\vdash_{A}P(z)\big)\dashv_{A}y\\
		&&
		+\big(P(z)\dashv_{A}P(x)\big)\dashv_{A}y
		+\Big(P\big(P(z)\big)\dashv_{A}x\Big)\dashv_{A}y.
	\end{eqnarray*}}
	Thus we have
	{\small
		\begin{eqnarray*}
			&&x\cdot_{A}(y\cdot_{A}z)-(x\cdot_{A}y)\cdot_{A}z-y\cdot_{A}(x\cdot_{A}z)\\
			&=&x\circ_{A}(y\circ_{A}z)+x\circ_{A}\big(P(y)\vdash_{A}z\big)-
			x\circ_{A}\big(P(z)\dashv_{A}y\big)
			+P(x)\vdash_{A}(y\circ_{A}z)\\
			&&+P(x)\vdash_{A}\big(P(y)\vdash_{A}z\big)
			-P(x)\vdash_{A}\big(P(z)\dashv_{A}y\big)
			-\Big(P\big(P(y)\big)\vdash_{A}z\Big)\dashv_{A}x\\
			&&
			-\big(P(y)\circ_{A}z\big)\dashv_{A}x -\big(y\circ_{A}P(z)\big)\dashv_{A}x
			-\big(P(y)\vdash_{A}P(z)\big)\dashv_{A}x\\
			&&
			+\big(P(z)\dashv_{A}P(y)\big)\dashv_{A}x
			+\Big( P\big(P(z)\big)\dashv_{A}y\Big)\dashv_{A}x -(x\circ_{A}y)\circ_{A}z\\
			&&-\big(P(x)\vdash_{A}y\big)\circ_{A}z
			+\big(P(y)\dashv_{A}x\big)\circ_{A}z
			-\big(P(x)\circ_{A}y\big)\vdash_{A}z\\
			&&-\big(x\circ_{A}P(y)\big)\vdash_{A}z
			-\big(P(x)\vdash_{A}P(y)\big)\vdash_{A}z
			-\Big(P\big(P(x)\big)\vdash_{A}y\Big)\vdash_{A}z\\
			&&
			+\big(P(y)\dashv_{A}P(x)\big)\vdash_{A}z +\Big(P\big(P(y)\big)\dashv_{A}x\Big)\vdash_{A}z
			+P(z)\dashv_{A}(x\circ_{A}y)\\
			&&
			+P(z)\dashv_{A}\big(P(x)\vdash_{A}y\big) 
			-P(z)\dashv_{A}\big(P(y)\dashv_{A}x\big) 
			 -y\circ_{A}(x\circ_{A}z)\\
			 &&-y\circ_{A}\big(P(x)\vdash_{A}z\big) 
			+y\circ_{A}\big(P(z)\dashv_{A}x\big)
			-P(y)\vdash_{A}(x\circ_{A}z) \\
			&&-P(y)\vdash_{A}\big(P(x)\vdash_{A}z\big) 
			+P(y)\vdash_{A}\big(P(z)\dashv_{A}x\big)
			+\Big(P\big(P(x)\big)\vdash_{A}z\Big)\dashv_{A}y\\
			&&
			+\big(P(x)\circ_{A}z\big)\dashv_{A}y +\big(x\circ_{A}P(z)\big)\dashv_{A}y
			+\big(P(x)\vdash_{A}P(z)\big)\dashv_{A}y\\
			&&
			-\big(P(z)\dashv_{A}P(x)\big)\dashv_{A}y
			-\Big( P\big(P(z)\big)\dashv_{A}x\Big)\dashv_{A}y\\
			&\overset{\eqref{eq:Leibniz},\eqref{eq:GD dia1}-\eqref{eq:GD dia3}}{=}&
			P(x)\vdash_{A}\big(P(y)\vdash_{A}z\big)
			-P(x)\vdash_{A}\big(P(z)\dashv_{A}y\big)
			-\Big(P\big(P(y)\big)\vdash_{A}z\Big)\dashv_{A}x\\
			&&
			-\big(P(y)\vdash_{A}P(z)\big)\dashv_{A}x +\big(P(z)\dashv_{A}P(y)\big)\dashv_{A}x
			+\Big( P\big(P(z)\big)\dashv_{A}y\Big)\dashv_{A}x\\
			&&
			-\big(P(x)\vdash_{A}P(y)\big)\vdash_{A}z 
			-\Big(P\big(P(x)\big)\vdash_{A}y\Big)\vdash_{A}z +\big(P(y)\dashv_{A}P(x)\big)\vdash_{A}z\\
			&&
			+\Big(P\big(P(y)\big)\dashv_{A}x\Big)\vdash_{A}z 
			+P(z)\dashv_{A}\big(P(x)\vdash_{A}y\big)
			-P(z)\dashv_{A}\big(P(y)\dashv_{A}x\big)\\
			&& -P(y)\vdash_{A}\big(P(x)\vdash_{A}z\big) 
			+P(y)\vdash_{A}\big(P(z)\dashv_{A}x\big)
			+\Big(P\big(P(x)\big)\vdash_{A}z\Big)\dashv_{A}y\\
			&&
			+\big(P(x)\vdash_{A}P(z)\big)\dashv_{A}y -\big(P(z)\dashv_{A}P(x)\big)\dashv_{A}y
			-\Big( P\big(P(z)\big)\dashv_{A}x\Big)\dashv_{A}y\\
			&\overset{\eqref{eq:nd4},\eqref{eq:nd5}}{=}&
			P(x)\vdash_{A}\big(P(y)\vdash_{A}z\big)
			-P(x)\vdash_{A}\big(P(z)\dashv_{A}y\big)
			-\big(P(y)\vdash_{A}P(z)\big)\dashv_{A}x\\
			&&
			+\big(P(z)\dashv_{A}P(y)\big)\dashv_{A}x -\big(P(x)\vdash_{A}P(y)\big)\vdash_{A}z
			+\big(P(y)\dashv_{A}P(x)\big)\vdash_{A}z\\
			&&
			+P(z)\dashv_{A}\big(P(x)\vdash_{A}y\big)
			-P(z)\dashv_{A}\big(P(y)\dashv_{A}x\big) -P(y)\vdash_{A}\big(P(x)\vdash_{A}z\big)\\
			&&
			+P(y)\vdash_{A}\big(P(z)\dashv_{A}x\big)
			+\big(P(x)\vdash_{A}P(z)\big)\dashv_{A}y
			-\big(P(z)\dashv_{A}P(x)\big)\dashv_{A}y\\
			&\overset{\eqref{eq:nd1}-\eqref{eq:nd3}}{=}&0.
	\end{eqnarray*}}Hence $(A,\cdot_{A})$ is a Leibniz algebra.
\end{proof}

\begin{pro}\label{pro:tensor product}
Let $A$ be a vector space with multiplications $\circ_{A},\vdash_{A},\dashv_{A}:A\otimes A\rightarrow A$. Set $\widehat{A}=A\otimes \mathbb {K}[t,t^{-1}]$.
Define a multiplication $\cdot_{\widehat{A}}:\widehat{A}\otimes \widehat{A}\rightarrow \widehat{A}$ by
\begin{eqnarray}\label{eq:1839}
(x\otimes t^{i})\cdot_{\widehat{A}}(y\otimes t^{j})=x\circ_{A}y\otimes t^{i+j}+(ix\vdash_{A}y-jy\dashv_{A}x)\otimes t^{i+j-1},\;\forall x,y\in A,i,j\in \mathbb {Z}.
\end{eqnarray}
Then $(\widehat{A},\cdot_{\widehat{A}})$ is a Leibniz algebra if and only if $(A,\circ_{A},\vdash_{A},\dashv_{A})$ is a GD dialgebra.
\end{pro}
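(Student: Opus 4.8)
The plan is to substitute the defining rule \eqref{eq:1839} into the Leibniz identity \eqref{eq:Leibniz} evaluated on homogeneous elements $x\otimes t^i$, $y\otimes t^j$, $z\otimes t^k$, and to compare the two sides of
\[
(x\otimes t^i)\cdot_{\widehat{A}}\big((y\otimes t^j)\cdot_{\widehat{A}}(z\otimes t^k)\big)
=\big((x\otimes t^i)\cdot_{\widehat{A}}(y\otimes t^j)\big)\cdot_{\widehat{A}}(z\otimes t^k)
+(y\otimes t^j)\cdot_{\widehat{A}}\big((x\otimes t^i)\cdot_{\widehat{A}}(z\otimes t^k)\big).
\]
Since $\cdot_{\widehat{A}}$ drops the $t$-degree by at most one, each side splits into homogeneous pieces of $t$-degrees $i+j+k$, $i+j+k-1$ and $i+j+k-2$, so the identity above is equivalent to its validity in each of these three degrees. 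After cancelling the common power of $t$, the two sides in a fixed degree are $A$-valued polynomials in the integers $i,j,k$; as $\mathbb K$ has characteristic zero and $i,j,k$ range over all of $\mathbb Z$, the polynomials agree for all values if and only if the coefficients of every monomial $1,\,i,\,j,\,k,\,i^2,\,ij,\,ik,\,j^2,\,jk,\,k^2$ agree. This turns the statement into a finite comparison of identities in $A$, and I would simply read the resulting equivalence in both directions to obtain the ``if'' and ``only if'' parts at once.

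I then expect the following dictionary. The degree-$(i+j+k)$ part gives only $x\circ_A(y\circ_A z)=(x\circ_A y)\circ_A z+y\circ_A(x\circ_A z)$, i.e.\ that $(A,\circ_A)$ is a Leibniz algebra. The degree-$(i+j+k-1)$ part is linear in $i,j,k$ (its constant term vanishes trivially), and the coefficients of $i$, $j$, $k$ are exactly \eqref{eq:GD dia1}, \eqref{eq:GD dia2}, \eqref{eq:GD dia3}. The degree-$(i+j+k-2)$ part is quadratic: the $ij$-coefficient is \eqref{eq:nd1}, the $ik$-coefficient is \eqref{eq:nd2} (after relabeling $x,y,z$), the $k^2$-coefficient is \eqref{eq:nd4}, the $i^2$- and $j^2$-coefficients together yield the two equalities in \eqref{eq:nd5}, and the $jk$-coefficient becomes \eqref{eq:nd3} once \eqref{eq:nd2} is used; the linear monomials $i,j,k$ in this degree only repeat \eqref{eq:nd4}--\eqref{eq:nd5}. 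Hence the full list of coefficient conditions is equivalent to: $(A,\circ_A)$ is Leibniz, $(A,\vdash_A,\dashv_A)$ is a Novikov dialgebra, and \eqref{eq:GD dia1}--\eqref{eq:GD dia3} hold — precisely the definition of a GD dialgebra.

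The main obstacle is purely organizational: carrying out the (sizeable) expansion of the triple product and correctly matching the ten monomial coefficients to the axioms. The one subtlety is that the correspondence is not a term-by-term bijection — \eqref{eq:nd5} is split across the $i^2$- and $j^2$-coefficients, and the $jk$-coefficient is not a listed axiom but is equivalent to \eqref{eq:nd3} only modulo \eqref{eq:nd2} — so one has to argue equivalence of the two \emph{systems} of identities rather than equality of individual equations. Conceptually this is the Leibniz/Novikov-dialgebra counterpart of the classical affinization correspondence between Gel'fand--Dorfman algebras and loop-type Lie algebras $A\otimes\mathbb K[t,t^{-1}]$.
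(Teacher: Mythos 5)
Your proposal is correct, and your coefficient dictionary checks out against the actual expansion (the $ij$-, $ik$-, $jk$-, $i^2$-, $j^2$-, $k^2$-coefficients of the degree-$(i+j+k-2)$ part are indeed \eqref{eq:nd1}, \eqref{eq:nd2} relabeled, the auxiliary identity \eqref{eq:L2}, the two equalities of \eqref{eq:nd5} and \eqref{eq:nd4}, while the degree-$(i+j+k-1)$ part is linear with coefficients \eqref{eq:GD dia1}--\eqref{eq:GD dia3}); the characteristic-zero polynomial argument legitimately converts the Leibniz identity on $\widehat{A}$ into the vanishing of all these coefficients, so you get both implications simultaneously, provided you note, as you do, that the resulting system is equivalent to the GD-dialgebra axioms as a system (\eqref{eq:L2} versus \eqref{eq:nd3} modulo \eqref{eq:nd2}). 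The paper organizes this differently: for necessity it does not extract coefficients but specializes $(i,j,k)$ to a handful of triples such as $(1,1,0)$, $(1,0,1)$, $(0,1,1)$, $(-1,0,0)$, $(0,-1,0)$, $(0,0,-1)$ in its equations \eqref{eq:tensor2}--\eqref{eq:tensor3}, which yields the same identities with less bookkeeping; and for sufficiency it avoids re-expanding altogether by extending the GD-dialgebra structure coefficientwise to $\widehat{A}$, observing that $P(x\otimes t^{i})=ix\otimes t^{i-1}$ is a derivation of this extended structure, rewriting \eqref{eq:1839} in the form \eqref{eq:derivation LN}, and invoking Proposition \ref{pro:LN and Leib}. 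Your route is more self-contained and makes the two-way equivalence transparent at the level of one polynomial identity, at the cost of carrying the full ten-monomial comparison; the paper's route is shorter for the converse because it reuses the derivation construction, and it exhibits the affinization philosophy explicitly, which is exactly the conceptual reading you mention at the end.
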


\begin{proof}
Let $x,y,z\in A,i,j,k\in \mathbb {Z}$. Then we have
\begin{align*}
(x\otimes t^{i})\cdot_{\widehat{A}}\big((y\otimes t^{j})\cdot_{\widehat{A}}(z\otimes t^{k})\big)
&=(x\otimes t^{i})\cdot_{\widehat{A}}\big(y\circ_{A}z\otimes t^{j+k}+(jy\vdash_{A}z-kz\dashv_{A}y)\otimes t^{j+k-1}\big)\\
&=x\circ_{A}(y\circ_{A}z)\otimes t^{i+j+k}+\big(jx\circ_{A}(y\vdash_{A}z)
-kx\circ_{A}(z\dashv_{A}y)\\
&\ \
+ix\vdash_{A}(y\circ_{A}z)
-(j+k)(y\circ_{A}z)\dashv_{A}x\big)\otimes t^{i+j+k-1}\\
&\ \
+\big(ijx\vdash_{A}(y\vdash_{A} z)-j(j+k-1)(y\vdash_{A}z)\dashv_{A}x\\
&\ \
-ikx\vdash_{A}(z\dashv_{A} y)+k(j+k-1)(z\dashv_{A}y)\dashv_{A}x\big)\otimes t^{i+j+k-2},\\
\big((x\otimes t^{i})\cdot_{\widehat{A}}(y\otimes t^{j})\big)\cdot_{\widehat{A}}(z\otimes t^{k})
&=\big(x\circ_{A}y\otimes t^{i+j}+(ix\vdash_{A} y-jy\dashv_{A}x)\otimes t^{i+j-1}\big)\cdot_{\widehat{A}}(z\otimes t^{k})\\
&=(x\circ_{A}y)\circ_{A}z\otimes t^{i+j+k}
+\big(i(x\vdash_{A}y)\circ_{A}z-j(y\dashv_{A}x)\circ_{A}z\\
&\ \
+(i+j)(x\circ_{A}y)\vdash_{A}z-kz\dashv_{A}(x\circ_{A}y)\big)
\otimes t^{i+j+k-1}\\
&\ \
+\big(i(i+j-1)(x\vdash_{A}y)\vdash_{A} z-ik z\dashv_{A}(x\vdash_{A}y)\\
&\ \
-j(i+j-1)(y\dashv_{A}x)\vdash_{A}z+jkz\dashv_{A}(y\dashv_{A} x)\big)\otimes t^{i+j+k-2},\\
(y\otimes t^{j})\cdot_{\widehat{A}}\big((x\otimes t^{i})\cdot_{\widehat{A}}(z\otimes t^{k})\big)
&=y\circ_{A}(x\circ_{A}z)\otimes t^{i+j+k}+\big(iy\circ_{A}(x\vdash_{A}z)
-ky\circ_{A}(z\dashv_{A}x)\\
&\ \
+jy\vdash_{A}(x\circ_{A}z)
-(i+k)(x\circ_{A}z)\dashv_{A}y\big)\otimes t^{i+j+k-1}\\
&\ \
+\big(ijy\vdash_{A}(x\vdash_{A} z)-i(i+k-1)(x\vdash_{A}z)\dashv_{A}y\\
&\ \
-jky\vdash_{A}(z\dashv_{A} x)+k(i+k-1)(z\dashv_{A}x)\dashv_{A}y\big)\otimes t^{i+j+k-2}.
\end{align*}
Since $(\widehat{A},\cdot_{\widehat{A}})$ is a Leibniz algebra, we obtain the following equations:
\begin{align}
&x\circ_{A}(y\circ_{A}z)=(x\circ_{A}y)\circ_{A}z+y\circ_{A}(x\circ_{A}z),\label{eq:tensor1}\\
&jx\circ_{A}(y\vdash_{A}z)-kx\circ_{A}(z\dashv_{A}y)
+ix\vdash_{A}(y\circ_{A}z)-(j+k)(y\circ_{A}z)\dashv_{A}x\nonumber\\
&=i(x\vdash_{A}y)\circ_{A}z-j(y\dashv_{A}x)\circ_{A}z
+(i+j)(x\circ_{A}y)\vdash_{A}z-kz\dashv_{A}(x\circ_{A}y)\nonumber\\
&\ \
+iy\circ_{A}(x\vdash_{A}z)-ky\circ_{A}(z\dashv_{A}x)
+jy\vdash_{A}(x\circ_{A}z)-(i+k)(x\circ_{A}z)\dashv_{A}y
,\label{eq:tensor2}\\
&ijx\vdash_{A}(y\vdash_{A} z)-j(j+k-1)(y\vdash_{A} z)\dashv_{A}x-ikx\vdash_{A}(z\dashv_{A} y)+k(j+k-1)(z\dashv_{A}y)\dashv_{A}x\nonumber\\
&=i(i+j-1)(x\vdash_{A}y)\vdash_{A}z-ikz\dashv_{A}(x\vdash_{A}y)
-j(i+j-1)(y\dashv_{A}x)\vdash_{A}z+jkz\dashv_{A}(y\dashv_{A} x)\nonumber\\
&\ \
+ijy\vdash_{A}(x\vdash_{A} z)-i(i+k-1)(x\vdash_{A}z)\dashv_{A}y
-jky\vdash_{A}(z\dashv_{A} x)+k(i+k-1)(z\dashv_{A} x)\dashv_{A}y.\label{eq:tensor3}
\end{align}
Hence we have the following conclusions.
\begin{enumerate}
\item
By \eqref{eq:tensor1}, $(A,\circ_{A})$ is a Leibniz algebra.
\item
Taking $i=j=1,k=0$, \eqref{eq:tensor3} reads \eqref{eq:nd1}.
\item
Taking $i=k=1,j=0$, \eqref{eq:tensor3} reads \eqref{eq:nd2}.
\item
Taking $j=k=1,i=0$, \eqref{eq:tensor3} reads
\begin{equation}\label{eq:L2}
(y\vdash_{A}z)\dashv_{A}x=y\vdash_{A}(z\dashv_{A}x)
+(z\dashv_{A}y)\dashv_{A}x-z\dashv_{A}(y\dashv_{A}x).
\end{equation}
\item
Taking $i=-1,j=k=0$, \eqref{eq:tensor2} reads \eqref{eq:GD dia1}; \eqref{eq:tensor3} reads the second equality in \eqref{eq:nd5}.
\item
Taking $j=-1,i=k=0$, \eqref{eq:tensor2} reads \eqref{eq:GD dia2}; \eqref{eq:tensor3} reads the first equality in \eqref{eq:nd5}.
\item
Taking $k=-1,i=j=0$, \eqref{eq:tensor2} reads \eqref{eq:GD dia3}; \eqref{eq:tensor3} reads \eqref{eq:nd4}.
\end{enumerate}
It is clear that the difference between \eqref{eq:nd2} and \eqref{eq:L2} gives \eqref{eq:nd3}. Thus $(A,\vdash_{A},\dashv_{A})$ is a Novikov dialgebra. Therefore $(A,\circ_{A},\vdash_{A},\dashv_{A})$ is a GD dialgebra.

 Conversely, suppose that $(A,\circ_{A},\vdash_{A},\dashv_{A})$ is a GD dialgebra.
 Then there is a natural extended GD dialgebra structure
 on $\widehat{A}$ given by
 \begin{eqnarray*}
 &&(x\otimes t^{i})\circ_{\widehat{A}}(y\otimes t^{j})=x\circ_{A}y\otimes t^{i+j},\;
 (x\otimes t^{i})\vdash_{\widehat{A}}(y\otimes t^{j})=x\vdash_{A}y\otimes t^{i+j},\\
 &&(x\otimes t^{i})\dashv_{\widehat{A}}(y\otimes t^{j})=x\dashv_{A}y\otimes t^{i+j},\;\forall x,y\in A,i,j\in \mathbb {Z}.
 \end{eqnarray*}
Moreover, the natural derivation $P$ on $\mathbb{K}[t,t^{-1}]$ given by $P(t^{i})=it^{i-1}$ can be extended to $\widehat{A}$, that is, we have
\begin{eqnarray*}
P(x\otimes t^{i})=x\otimes P(t^{i})=ix\otimes t^{i-1},\;\forall x\in A,i\in \mathbb {Z}.
\end{eqnarray*}
Hence we can rewrite \eqref{eq:1839} by
\begin{eqnarray*}
	(x\otimes t^{i})\cdot_{\widehat{A}} (y\otimes t^{j})=(x\otimes t^{i})\circ_{\widehat{A}}(y\otimes t^{j})+P(x\otimes t^{i})\vdash_{\widehat{A}}(y\otimes t^{j})
	-P(y\otimes t^{j})\dashv_{\widehat{A}}(x\otimes t^{i}).
\end{eqnarray*}
Thus by Proposition \ref{pro:LN and Leib}, $(\widehat{A},\cdot_{\widehat{A}})$ is a Leibniz algebra.
\end{proof}

\delete{
Combining Proposition \ref{pro:2-alg} and Proposition \ref{pro:tensor product} together, we have the following result.

\begin{cor}
Let $A$ be a vector space with multiplications $\succ_{A},\prec_{A}:A\otimes A\rightarrow A$. Set $\widehat{A}=A\otimes \mathbb {K}[t,t^{-1}]$.
Define a multiplication $\circ_{\widehat{A}}:\widehat{A}\otimes \widehat{A}\rightarrow \widehat{A}$ by
\begin{eqnarray}
(x\otimes t^{i})\circ_{\widehat{A}}(y\otimes t^{j})=\big((i+2j)x\succ_{A}y+(2i+j)x\prec_{A}y\big)\otimes t^{i+j-1},\;\forall x,y\in A,i,j\in \mathbb {Z}.
\end{eqnarray}
Then $(\widehat{A},\circ_{\widehat{A}})$ is a Leibniz algebra if and only if $(A,\succ_{A},\prec_{A})$ is an admissible Novikov dialgebra.
\end{cor}}

\delete{
\begin{proof}
It is clear that $(A\oplus A^*,\bullet_{1_{d}})$ and $(A\oplus A^*,\bullet_{2_{d}})$ are Leibniz algebras.
Let $x,y,z\in A,a^{*},b^{*},c^{*}\in A^{*}$, we have
\begin{eqnarray*}
&&(x+a^*)\bullet_{2_{d}}\big((y+b^*)\bullet_{1_{d}}(z+c^*)\big)
+(x+a^*)\bullet_{1_{d}}\big((y+b^*)\bullet_{2_{d}}(z+c^*)\big)\\
&&
-\big((x+a^*)\bullet_{1_{d}}(y+b^*)\big)\bullet_{2_{d}}(z+c^*)
-\big((x+a^*)\bullet_{2_{d}}(y+b^*)\big)\bullet_{1_{d}}(z+c^*)\\
&&
-(y+b^*)\bullet_{2_{d}}\big((x+a^*)\bullet_{1_{d}}(z+c^*)\big)
-(y+b^*)\bullet_{1_{d}}\big((x+a^*)\bullet_{2_{d}}(z+c^*)\big)\\
&=&2x\bullet_{A}(y\bullet_{A}z)
+\mathcal{L}^{*}_{\bullet_{A}}(x)\mathcal{L}^{*}_{\triangleright_{A}}(y)c^{*}
-\mathcal{L}^{*}_{\bullet_{A}}(x)(\mathcal{L}^{*}_{\triangleright_{A}}
+\mathcal{R}^{*}_{\triangleleft_{A}})(z)b^{*}
-(\mathcal{L}^{*}_{\bullet_{A}}
+\mathcal{R}^{*}_{\bullet_{A}})(y\bullet_{A}z)a^{*}\\
&&
+\mathcal{L}^{*}_{\triangleright_{A}}(x)\mathcal{L}^{*}_{\bullet_{A}}(y)c^{*}
-\mathcal{L}^{*}_{\triangleright_{A}}(x)(\mathcal{L}^{*}_{\bullet_{A}}
+\mathcal{R}^{*}_{\bullet_{A}})(z)b^{*}-(\mathcal{L}^{*}_{\triangleright_{A}}
+\mathcal{R}^{*}_{\triangleleft_{A}})(y\bullet_{A}z)a^{*}
-2(x\bullet_{A}y)\bullet_{A}z\\
&&
-\mathcal{L}^{*}_{\bullet_{A}}(x\bullet_{A}y)c^{*}
+(\mathcal{L}^{*}_{\bullet_{A}}
+\mathcal{R}^{*}_{\bullet_{A}})(z)\mathcal{L}^{*}_{\triangleright_{A}}(x)b^{*}
-(\mathcal{L}^{*}_{\bullet_{A}}
+\mathcal{R}^{*}_{\bullet_{A}})(z)(\mathcal{L}^{*}_{\triangleright_{A}}
+\mathcal{R}^{*}_{\triangleleft_{A}})(y)a^{*}\\
&&
-\mathcal{L}^{*}_{\triangleright_{A}}(x\bullet_{A}y)c^{*}
+(\mathcal{L}^{*}_{\triangleright_{A}}
+\mathcal{R}^{*}_{\triangleleft_{A}})(z)\mathcal{L}^{*}_{\bullet_{A}}(x)b^{*}
-(\mathcal{L}^{*}_{\triangleright_{A}}
+\mathcal{R}^{*}_{\triangleleft_{A}})(z)(\mathcal{L}^{*}_{\bullet_{A}}
+\mathcal{R}^{*}_{\bullet_{A}})(y)a^{*}\\
&&
-2y\bullet_{A}(x\bullet_{A}z)
-\mathcal{L}^{*}_{\bullet_{A}}(y)\mathcal{L}^{*}_{\triangleright_{A}}(x)c^{*}
+\mathcal{L}^{*}_{\bullet_{A}}(y)(\mathcal{L}^{*}_{\triangleright_{A}}
+\mathcal{R}^{*}_{\triangleleft_{A}})(z)a^{*}
+(\mathcal{L}^{*}_{\bullet_{A}}
+\mathcal{R}^{*}_{\bullet_{A}})(x\bullet_{A}z)b^{*}\\
&&
-\mathcal{L}^{*}_{\triangleright_{A}}(y)\mathcal{L}^{*}_{\bullet_{A}}(x)c^{*}
+\mathcal{L}^{*}_{\triangleright_{A}}(y)(\mathcal{L}^{*}_{\bullet_{A}}
+\mathcal{R}^{*}_{\bullet_{A}})(z)a^{*}
+(\mathcal{L}^{*}_{\triangleright_{A}}
+\mathcal{R}^{*}_{\triangleleft_{A}})(x\bullet_{A}z)b^{*}\\
&\overset{\eqref{eq:Leibniz}}{=}&
\mathcal{L}^{*}_{\bullet_{A}}(x)\mathcal{L}^{*}_{\triangleright_{A}}(y)c^{*}
-\mathcal{L}^{*}_{\bullet_{A}}(x)(\mathcal{L}^{*}_{\triangleright_{A}}
+\mathcal{R}^{*}_{\triangleleft_{A}})(z)b^{*}
-(\mathcal{L}^{*}_{\bullet_{A}}
+\mathcal{R}^{*}_{\bullet_{A}})(y\bullet_{A}z)a^{*}
+\mathcal{L}^{*}_{\triangleright_{A}}(x)\mathcal{L}^{*}_{\bullet_{A}}(y)c^{*}\\
&&
-\mathcal{L}^{*}_{\triangleright_{A}}(x)(\mathcal{L}^{*}_{\bullet_{A}}
+\mathcal{R}^{*}_{\bullet_{A}})(z)b^{*}-(\mathcal{L}^{*}_{\triangleright_{A}}
+\mathcal{R}^{*}_{\triangleleft_{A}})(y\bullet_{A}z)a^{*}
-\mathcal{L}^{*}_{\bullet_{A}}(x\bullet_{A}y)c^{*}\\
&&
+(\mathcal{L}^{*}_{\bullet_{A}}
+\mathcal{R}^{*}_{\bullet_{A}})(z)\mathcal{L}^{*}_{\triangleright_{A}}(x)b^{*}
-(\mathcal{L}^{*}_{\bullet_{A}}
+\mathcal{R}^{*}_{\bullet_{A}})(z)(\mathcal{L}^{*}_{\triangleright_{A}}
+\mathcal{R}^{*}_{\triangleleft_{A}})(y)a^{*}
-\mathcal{L}^{*}_{\triangleright_{A}}(x\bullet_{A}y)c^{*}\\
&&
+(\mathcal{L}^{*}_{\triangleright_{A}}
+\mathcal{R}^{*}_{\triangleleft_{A}})(z)\mathcal{L}^{*}_{\bullet_{A}}(x)b^{*}
-(\mathcal{L}^{*}_{\triangleright_{A}}
+\mathcal{R}^{*}_{\triangleleft_{A}})(z)(\mathcal{L}^{*}_{\bullet_{A}}
+\mathcal{R}^{*}_{\bullet_{A}})(y)a^{*}
-\mathcal{L}^{*}_{\bullet_{A}}(y)\mathcal{L}^{*}_{\triangleright_{A}}(x)c^{*}\\
&&
+\mathcal{L}^{*}_{\bullet_{A}}(y)(\mathcal{L}^{*}_{\triangleright_{A}}
+\mathcal{R}^{*}_{\triangleleft_{A}})(z)a^{*}
+(\mathcal{L}^{*}_{\bullet_{A}}
+\mathcal{R}^{*}_{\bullet_{A}})(x\bullet_{A}z)b^{*}
-\mathcal{L}^{*}_{\triangleright_{A}}(y)\mathcal{L}^{*}_{\bullet_{A}}(x)c^{*}\\
&&
+\mathcal{L}^{*}_{\triangleright_{A}}(y)(\mathcal{L}^{*}_{\bullet_{A}}
+\mathcal{R}^{*}_{\bullet_{A}})(z)a^{*}
+(\mathcal{L}^{*}_{\triangleright_{A}}
+\mathcal{R}^{*}_{\triangleleft_{A}})(x\bullet_{A}z)b^{*}.
\end{eqnarray*}
Suppose that $(A\oplus A^*,\bullet_{1_{d}},\bullet_{2_{d}})$
is a compatible Leibniz algebra. Then we have
\begin{eqnarray*}
0&=&\langle \mathcal{L}^{*}_{\bullet_{A}}(x)\mathcal{L}^{*}_{\triangleright_{A}}(y)c^{*}
+\mathcal{L}^{*}_{\triangleright_{A}}(x)\mathcal{L}^{*}_{\bullet_{A}}(y)c^{*}
-\mathcal{L}^{*}_{\bullet_{A}}(x\bullet_{A}y)c^{*}
-\mathcal{L}^{*}_{\triangleright_{A}}(x\bullet_{A}y)c^{*}\\
&&
-\mathcal{L}^{*}_{\bullet_{A}}(y)\mathcal{L}^{*}_{\triangleright_{A}}(x)c^{*}
-\mathcal{L}^{*}_{\triangleright_{A}}(y)\mathcal{L}^{*}_{\bullet_{A}}(x)c^{*},z\rangle\\
&=&\langle c^{*},y\triangleright_{A}(x\bullet_{A}z)+y\bullet_{A}(x\triangleright_{A}z)
+(x\bullet_{A}y)\bullet_{A}z+(x\bullet_{A}y)\triangleright_{A}z
-x\triangleright_{A}(y\bullet_{A}z)-x\bullet_{A}(y\triangleright_{A}z)\rangle,\\
0&=&\langle -\mathcal{L}^{*}_{\bullet_{A}}(x)(\mathcal{L}^{*}_{\triangleright_{A}}
+\mathcal{R}^{*}_{\triangleleft_{A}})(z)b^{*}
-\mathcal{L}^{*}_{\triangleright_{A}}(x)(\mathcal{L}^{*}_{\bullet_{A}}
+\mathcal{R}^{*}_{\bullet_{A}})(z)b^{*}
+(\mathcal{L}^{*}_{\bullet_{A}}
+\mathcal{R}^{*}_{\bullet_{A}})(z)\mathcal{L}^{*}_{\triangleright_{A}}(x)b^{*}\\
&&
+(\mathcal{L}^{*}_{\triangleright_{A}}
+\mathcal{R}^{*}_{\triangleleft_{A}})(z)\mathcal{L}^{*}_{\bullet_{A}}(x)b^{*}
+(\mathcal{L}^{*}_{\bullet_{A}}
+\mathcal{R}^{*}_{\bullet_{A}})(x\bullet_{A}z)b^{*}
+(\mathcal{L}^{*}_{\triangleright_{A}}
+\mathcal{R}^{*}_{\triangleleft_{A}})(x\bullet_{A}z)b^{*},y\rangle\\
&=&\langle b^{*},-z\triangleright_{A}(x\bullet_{A}y)-(x\bullet_{A}y)\triangleleft_{A}z
-z\bullet_{A}(x\triangleright_{A}y)-(x\triangleright_{A}y)\bullet_{A}z
+x\triangleright_{A}(z\bullet_{A}y)\\
&&
+x\triangleright_{A}(y\bullet_{A}z)
+x\bullet_{A}(z\triangleright_{A}y)+x\bullet_{A}(y\triangleleft_{A}z)
-(x\bullet_{A}z)\bullet_{A}y-(x\bullet_{A}z)\triangleright_{A}y
-y\bullet_{A}(x\bullet_{A}z)-y\triangleleft_{A}(x\bullet_{A}z)\rangle,\\
0&=&\langle-(\mathcal{L}^{*}_{\bullet_{A}}
+\mathcal{R}^{*}_{\bullet_{A}})(y\bullet_{A}z)a^{*}
-(\mathcal{L}^{*}_{\triangleright_{A}}
+\mathcal{R}^{*}_{\triangleleft_{A}})(y\bullet_{A}z)a^{*}
-(\mathcal{L}^{*}_{\bullet_{A}}
+\mathcal{R}^{*}_{\bullet_{A}})(z)(\mathcal{L}^{*}_{\triangleright_{A}}
+\mathcal{R}^{*}_{\triangleleft_{A}})(y)a^{*}\\
&&
-(\mathcal{L}^{*}_{\triangleright_{A}}
+\mathcal{R}^{*}_{\triangleleft_{A}})(z)(\mathcal{L}^{*}_{\bullet_{A}}
+\mathcal{R}^{*}_{\bullet_{A}})(y)a^{*}
+\mathcal{L}^{*}_{\bullet_{A}}(y)(\mathcal{L}^{*}_{\triangleright_{A}}
+\mathcal{R}^{*}_{\triangleleft_{A}})(z)a^{*}
+\mathcal{L}^{*}_{\triangleright_{A}}(y)(\mathcal{L}^{*}_{\bullet_{A}}
+\mathcal{R}^{*}_{\bullet_{A}})(z)a^{*},x\rangle\\
&=&\langle a^{*},(y\bullet_{A}z)\bullet_{A}x+
x\bullet_{A}(y\bullet_{A}z)+(y\bullet_{A}z)\triangleright_{A}x
+x\triangleleft_{A}(y\bullet_{A}z)
-y\triangleright_{A}(z\bullet_{A}x)-y\triangleright_{A}(x\bullet_{A}z)
-(z\bullet_{A}x)\triangleleft_{A}y\\
&&
-(x\bullet_{A}z)\triangleleft_{A}y
-y\bullet_{A}(z\triangleright_{A}x)-y\bullet_{A}(x\triangleleft_{A}z)
-(z\triangleright_{A}x)\bullet_{A}y-(x\triangleleft_{A}z)\bullet_{A}y\\
&&
+z\triangleright_{A}(y\bullet_{A}x)+(y\bullet_{A}x)\triangleleft_{A}z
+z\bullet_{A}(y\triangleright_{A}x)+(y\triangleright_{A}x)\bullet_{A}z\rangle.
\end{eqnarray*}
Thus we obtain the following equations:
\begin{eqnarray}
0&=&y\triangleright_{A}(x\bullet_{A}z)+y\bullet_{A}(x\triangleright_{A}z)
+(x\bullet_{A}y)\bullet_{A}z+(x\bullet_{A}y)\triangleright_{A}z
-x\triangleright_{A}(y\bullet_{A}z)-x\bullet_{A}(y\triangleright_{A}z),\label{eq:1}\\
0&=&-z\triangleright_{A}(x\bullet_{A}y)-(x\bullet_{A}y)\triangleleft_{A}z
-z\bullet_{A}(x\triangleright_{A}y)-(x\triangleright_{A}y)\bullet_{A}z
+x\triangleright_{A}(z\bullet_{A}y)\nonumber\\
&&
+x\triangleright_{A}(y\bullet_{A}z)
+x\bullet_{A}(z\triangleright_{A}y)+x\bullet_{A}(y\triangleleft_{A}z)
-(x\bullet_{A}z)\bullet_{A}y-(x\bullet_{A}z)\triangleright_{A}y
-y\bullet_{A}(x\bullet_{A}z)-y\triangleleft_{A}(x\bullet_{A}z)\nonumber\\
&\overset{\eqref{eq:1}}{=}&
-(x\bullet_{A}y)\triangleleft_{A}z-(x\triangleright_{A}y)\bullet_{A}z
+x\triangleright_{A}(y\bullet_{A}z)+x\bullet_{A}(y\triangleleft_{A}z)
-y\bullet_{A}(x\bullet_{A}z)-y\triangleleft_{A}(x\bullet_{A}z),\label{eq:2}\\
0&=&(y\bullet_{A}z)\bullet_{A}x+
x\bullet_{A}(y\bullet_{A}z)+(y\bullet_{A}z)\triangleright_{A}x
+x\triangleleft_{A}(y\bullet_{A}z)
-y\triangleright_{A}(z\bullet_{A}x)-y\triangleright_{A}(x\bullet_{A}z)
-(z\bullet_{A}x)\triangleleft_{A}y\nonumber\\
&&
-(x\bullet_{A}z)\triangleleft_{A}y
-y\bullet_{A}(z\triangleright_{A}x)-y\bullet_{A}(x\triangleleft_{A}z)
-(z\triangleright_{A}x)\bullet_{A}y-(x\triangleleft_{A}z)\bullet_{A}y\nonumber\\
&&
+z\triangleright_{A}(y\bullet_{A}x)+(y\bullet_{A}x)\triangleleft_{A}z
+z\bullet_{A}(y\triangleright_{A}x)+(y\triangleright_{A}x)\bullet_{A}z\nonumber\\
&\overset{\eqref{eq:1},\eqref{eq:2}}{=}&
-(z\bullet_{A}x)\triangleleft_{A}y-(x\bullet_{A}z)\triangleleft_{A}y
-(z\triangleright_{A}x)\bullet_{A}y-(x\triangleleft_{A}z)\bullet_{A}y.\label{eq:3}
\end{eqnarray}
Since $(A,\triangleright_{A},\triangleleft_{A})$ is a pre-Leibniz algebra, we have
\begin{eqnarray*}
0&=&y\triangleright_{A}(x\bullet_{A}z)+y\bullet_{A}(x\triangleright_{A}z)
+(x\bullet_{A}y)\bullet_{A}z+(x\bullet_{A}y)\triangleright_{A}z
-x\triangleright_{A}(y\bullet_{A}z)-x\bullet_{A}(y\triangleright_{A}z)\\
&\overset{\eqref{eq:pre-L1}}{=}&
-x\triangleleft_{A}(y\triangleright_{A}z)
+y\triangleright_{A}(x\triangleleft_{A}z)
+y\bullet_{A}(x\triangleright_{A}z)+(x\bullet_{A}y)\bullet_{A}z
-x\triangleright_{A}(y\bullet_{A}z)\\
&\overset{\eqref{eq:Leibniz}}{=}&
x\triangleleft_{A}(y\triangleleft_{A}z)
-y\triangleleft_{A}(x\triangleleft_{A}z),\\
0&=&-(x\bullet_{A}y)\triangleleft_{A}z-(x\triangleright_{A}y)\bullet_{A}z
+x\triangleright_{A}(y\bullet_{A}z)+x\bullet_{A}(y\triangleleft_{A}z)
-y\bullet_{A}(x\bullet_{A}z)-y\triangleleft_{A}(x\bullet_{A}z)\\
&\overset{\eqref{eq:pre-L2}}{=}&
-(x\triangleleft_{A}y)\triangleleft_{A}z
-(x\triangleright_{A}y)\bullet_{A}z
+x\triangleright_{A}(y\triangleright_{A}z)
+x\bullet_{A}(y\triangleleft_{A}z)-y\bullet_{A}(x\bullet_{A}z)\\
&\overset{\eqref{eq:Leibniz}}{=}&
(x\triangleleft_{A}y)\triangleright_{A}z
-x\triangleleft_{A}(y\triangleright_{A}z),\\
0&=&-(z\bullet_{A}x)\triangleleft_{A}y-(x\bullet_{A}z)\triangleleft_{A}y
-(z\triangleright_{A}x)\bullet_{A}y-(x\triangleleft_{A}z)\bullet_{A}y\\
&\overset{\eqref{eq:pre-L3}}{=}&
-(z\triangleright_{A}x)\triangleright_{A}y
-(x\triangleleft_{A}z)\triangleright_{A}y.
\end{eqnarray*}
Hence \eqref{eq:LN1}-\eqref{eq:LN2} hold. Therefore $(A,\triangleright_{A},\triangleleft_{A})$ is a Novikov dialgebra. The converse side is obtained similarly.
\end{proof}}

\delete{
\begin{pro}
Let $A$ be a vector space with multiplications $\triangleright_{A},\triangleleft_{A}:A\otimes A\rightarrow A$. Set $\widehat{A}=A\otimes \mathbb {K}[t,t^{-1}]$.
Define a multiplication $\circ_{\widehat{A}}:\widehat{A}\otimes \widehat{A}\rightarrow \widehat{A}$ by
\begin{eqnarray}
(x\otimes t^{i})\circ_{\widehat{A}}(y\otimes t^{j})=(ix\triangleright_{A}y+jx\triangleleft_{A}y)\otimes t^{i+j-1},\;\forall x,y\in A,i,j\in \mathbb {Z}.
\end{eqnarray}
Then $(\widehat{A},\circ_{\widehat{A}})$ is a Leibniz algebra if and only if $(A,\triangleright_{A},\triangleleft_{A})$ is a Novikov dialgebra.
\end{pro}

\begin{proof}
Let $x,y,z\in A$. Then we have
\begin{align*}
(x\otimes t^{i})\circ_{\widehat{A}}\big((y\otimes t^{j})\circ_{\widehat{A}}(z\otimes t^{k})\big)
&=(x\otimes t^{i})\circ_{\widehat{A}}\big((jy\triangleright_{A} z+ky\triangleleft_{A}z)\otimes t^{j+k-1}\big)\\
&=\big(ijx\triangleright_{A}(y\triangleright_{A} z)+j(j+k-1)x\triangleleft_{A}(y\triangleright_{A} z)\\
&\ \
+ikx\triangleright_{A}(y\triangleleft_{A} z)+k(j+k-1)x\triangleleft_{A}(y\triangleleft_{A} z)\big)\otimes t^{i+j+k-2},\\
\big((x\otimes t^{i})\circ_{\widehat{A}}(y\otimes t^{j})\big)\circ_{\widehat{A}}(z\otimes t^{k})
&=\big((ix\triangleright_{A} y+jx\triangleleft_{A}y)\otimes t^{i+j-1}\circ_{\widehat{A}}(z\otimes t^{k})\big)\\
&=\big(i(i+j-1)(x\triangleright_{A} y)\triangleright_{A} z+ik(x\triangleright_{A} y)\triangleleft_{A} z\\
&\ \
+j(i+j-1)(x\triangleleft_{A} y)\triangleright_{A} z+jk(x\triangleleft_{A} y)\triangleleft_{A} z\big)\otimes t^{i+j+k-2},\\
(y\otimes t^{j})\circ_{\widehat{A}}\big((x\otimes t^{i})\circ_{\widehat{A}}(z\otimes t^{k})\big)
&=\big(ijy\triangleright_{A}(x\triangleright_{A} z)+i(i+k-1)y\triangleleft_{A}(x\triangleright_{A} z)\\
&\ \
+jky\triangleright_{A}(x\triangleleft_{A} z)+k(i+k-1)y\triangleleft_{A}(x\triangleleft_{A} z)\big)\otimes t^{i+j+k-2}.
\end{align*}
Since $(\widehat{A},\circ_{\widehat{A}})$ is a Leibniz algebra, we obtain
\begin{eqnarray}
&&ijx\triangleright_{A}(y\triangleright_{A} z)+j(j+k-1)x\triangleleft_{A}(y\triangleright_{A} z)+ikx\triangleright_{A}(y\triangleleft_{A} z)+k(j+k-1)x\triangleleft_{A}(y\triangleleft_{A} z)\nonumber\\
&&=i(i+j-1)(x\triangleright_{A} y)\triangleright_{A} z+ik(x\triangleright_{A} y)\triangleleft_{A} z
+j(i+j-1)(x\triangleleft_{A} y)\triangleright_{A} z+jk(x\triangleleft_{A} y)\triangleleft_{A} z\nonumber\\
&&\ \
+ijy\triangleright_{A}(x\triangleright_{A} z)+i(i+k-1)y\triangleleft_{A}(x\triangleright_{A} z)
+jky\triangleright_{A}(x\triangleleft_{A} z)+k(i+k-1)y\triangleleft_{A}(x\triangleleft_{A} z).\label{eq:tensor}
\end{eqnarray}
Hence we have the following conclusions.
\begin{enumerate}
\item
Taking $i=j=1,k=0$, \eqref{eq:tensor} reads \eqref{eq:pre-L1}.
\item
Taking $i=k=1,j=0$, \eqref{eq:tensor} reads
\begin{equation}\label{eq:L1}
x\triangleright_{A}(y\triangleleft_{A} z)=(x\triangleright_{A} y)\triangleleft_{A} z+y\triangleleft_{A}(x\triangleright_{A} z)+y\triangleleft_{A}(x\triangleleft_{A} z).
\end{equation}
\item
Taking $j=k=1,i=0$, \eqref{eq:tensor} reads
\begin{equation}\label{eq:L2}
x\triangleleft_{A}(y\triangleright_{A} z)=(x\triangleleft_{A} y)\triangleleft_{A} z+y\triangleright_{A}(x\triangleleft_{A} z)-x\triangleleft_{A}(y\triangleleft_{A} z).
\end{equation}
\item
Taking $i=-1,j=k=0$ and $j=-1,i=k=0$ respectively,
\eqref{eq:tensor} reads \eqref{eq:LN2}.
\item
Taking $k=-1,i=j=0$, \eqref{eq:tensor} reads
\eqref{eq:LN1}.
\end{enumerate}
It is clear that the difference between \eqref{eq:L1} and \eqref{eq:L2} gives \eqref{eq:pre-L3}. Thus \eqref{eq:L1} and \eqref{eq:L2} hold if and only if \eqref{eq:pre-L2} and \eqref{eq:pre-L3} hold. Therefore, $(A,\triangleright_{A},\triangleleft_{A})$ is a Novikov dialgebra. Conversely, a similar argument shows that, if $(A,\triangleright_{A},\triangleleft_{A})$ is a Novikov dialgebra, then $(\widehat{A},\circ_{\widehat{A}})$ is a Leibniz algebra.
\end{proof}

Combining Proposition \ref{pro:2-alg} and Proposition \ref{pro:tensor product} together, we have the following result.

\begin{cor}
Let $A$ be a vector space with multiplications $\succ_{A},\prec_{A}:A\otimes A\rightarrow A$. Set $\widehat{A}=A\otimes \mathbb {K}[t,t^{-1}]$.
Define a multiplication $\circ_{\widehat{A}}:\widehat{A}\otimes \widehat{A}\rightarrow \widehat{A}$ by
\begin{eqnarray}
(x\otimes t^{i})\circ_{\widehat{A}}(y\otimes t^{j})=\big((i+2j)x\succ_{A}y+(2i+j)x\prec_{A}y\big)\otimes t^{i+j-1},\;\forall x,y\in A,i,j\in \mathbb {Z}.
\end{eqnarray}
Then $(\widehat{A},\circ_{\widehat{A}})$ is a Leibniz algebra if and only if $(A,\succ_{A},\prec_{A})$ is an admissible Novikov dialgebra.
\end{cor}

\begin{pro}
Let $(A,\triangleright_{A},\triangleleft_{A})$ be a Novikov dialgebra with a derivation $P$. Define a multiplication
$\circ_{A}:A\otimes A\rightarrow A$ by
\begin{equation}
x\circ_{A}y=P(x)\triangleright_{A}y+x\triangleleft_{A}P(y),\;\forall x,y\in A.
\end{equation}
Then $(A,\circ_{A})$ is a Leibniz algebra.
\end{pro}

\begin{proof}
Let $x,y,z\in A$, then we have
\begin{align*}
x\circ_{A}(y\circ_{A}z)
&=x\circ_{A}\big(P(y)\triangleright_{A}z+y\triangleleft_{A}P(z)\big)\\
&=P(x)\triangleright_{A}\big(P(y)\triangleright_{A}z
+y\triangleleft_{A}P(z)\big)
+x\triangleleft_{A}P\big(P(y)\triangleright_{A}z
+y\triangleleft_{A}P(z)\big)\\
&=P(x)\triangleright_{A}\big(P(y)\triangleright_{A}z\big)+P(x)\triangleright_{A}\big(y\triangleleft_{A}P(z)\big)
+x\triangleleft_{A}\Big(P\big(P(y)\big)\triangleright_{A}z\Big)\\
&\ \
+x\triangleleft_{A}\big(P(y)\triangleright_{A}P(z)\big)
+x\triangleleft_{A}\big(P(y)\triangleleft_{A}P(z)\big)+x\triangleleft_{A}\Big(y\triangleleft_{A}P\big(P(z)\big)\Big),\\
(x\circ_{A}y)\circ_{A}z
&=\big(P(x)\triangleright_{A}y+x\triangleleft_{A}P(y)\big)\circ_{A}z\\
&=P\big(P(x)\triangleright_{A}y+x\triangleleft_{A}P(y)\big)\triangleright_{A}z
+\big(P(x)\triangleright_{A}y+x\triangleleft_{A}P(y)\big)\triangleleft_{A}P(z)\\
&=\big(P(x)\triangleright_{A}P(y)\big)\triangleright_{A}z+\Big(P\big(P(x)\big)\triangleright_{A}y\Big)\triangleright_{A}z
+\big(P(x)\triangleleft_{A}P(y)\big)\triangleright_{A}z\\
&\ \
+\Big(x\triangleleft_{A}P\big(P(y)\big)\Big)\triangleright_{A}z
+\big(P(x)\triangleright_{A}y\big)\triangleleft_{A}P(z)+\big(x\triangleleft_{A}P(y)\big)\triangleleft_{A}P(z),\\
y\circ_{A}(x\circ_{A}z)&=
P(y)\triangleright_{A}\big(P(x)\triangleright_{A}z\big)
+P(y)\triangleright_{A}\big(x\triangleleft_{A}P(z)\big)
+y\triangleleft_{A}\Big(P\big(P(x)\big)\triangleright_{A}z\Big)\\
&\ \
+y\triangleleft_{A}\big(P(x)\triangleright_{A}P(z)\big)
+y\triangleleft_{A}\big(P(x)\triangleleft_{A}P(z)\big)
+y\triangleleft_{A}\Big(x\triangleleft_{A}P\big(P(z)\big)\Big).
\end{align*}
Thus we have
{\small
\begin{eqnarray*}
&&x\circ_{A}(y\circ_{A}z)-(x\circ_{A}y)\circ_{A}z-y\circ_{A}(x\circ_{A}z)\\
&=&P(x)\triangleright_{A}\big(P(y)\triangleright_{A}z\big)
+P(x)\triangleright_{A}\big(y\triangleleft_{A}P(z)\big)
+x\triangleleft_{A}\Big(P\big(P(y)\big)\triangleright_{A}z\Big)
+x\triangleleft_{A}\big(P(y)\triangleright_{A}P(z)\big)\\
&&
+x\triangleleft_{A}\big(P(y)\triangleleft_{A}P(z)\big)
+x\triangleleft_{A}\Big(y\triangleleft_{A}P\big(P(z)\big)\Big)
-\big(P(x)\triangleright_{A}P(y)\big)\triangleright_{A}z
-\Big(P\big(P(x)\big)\triangleright_{A}y\Big)\triangleright_{A}z\\
&&
-\big(P(x)\triangleleft_{A}P(y)\big)\triangleright_{A}z
-\Big(x\triangleleft_{A}P\big(P(y)\big)\Big)\triangleright_{A}z
-\big(P(x)\triangleright_{A}y\big)\triangleleft_{A}P(z)
-\big(x\triangleleft_{A}P(y)\big)\triangleleft_{A}P(z)\\
&&
-P(y)\triangleright_{A}\big(P(x)\triangleright_{A}z\big)
-P(y)\triangleright_{A}\big(x\triangleleft_{A}P(z)\big)
-y\triangleleft_{A}\Big(P\big(P(x)\big)\triangleright_{A}z\Big)
-y\triangleleft_{A}\big(P(x)\triangleright_{A}P(z)\big)\\
&&
-y\triangleleft_{A}\big(P(x)\triangleleft_{A}P(z)\big)
-y\triangleleft_{A}\Big(x\triangleleft_{A}P\big(P(z)\big)\Big)\\
&\overset{\eqref{eq:LN1},\eqref{eq:LN2}}{=}&
P(x)\triangleright_{A}\big(P(y)\triangleright_{A}z\big)
+P(x)\triangleright_{A}\big(y\triangleleft_{A}P(z)\big)
+x\triangleleft_{A}\big(P(y)\triangleright_{A}P(z)\big)
+x\triangleleft_{A}\big(P(y)\triangleleft_{A}P(z)\big)\\
&&
-\big(P(x)\triangleright_{A}P(y)\big)\triangleright_{A}z
-\big(P(x)\triangleleft_{A}P(y)\big)\triangleright_{A}z
-\big(P(x)\triangleright_{A}y\big)\triangleleft_{A}P(z)
-\big(x\triangleleft_{A}P(y)\big)\triangleleft_{A}P(z)\\
&&
-P(y)\triangleright_{A}\big(P(x)\triangleright_{A}z\big)
-P(y)\triangleright_{A}\big(x\triangleleft_{A}P(z)\big)
-y\triangleleft_{A}\big(P(x)\triangleright_{A}P(z)\big)
-y\triangleleft_{A}\big(P(x)\triangleleft_{A}P(z)\big)\\
&\overset{\eqref{eq:pre-L1}-\eqref{eq:pre-L3}}{=}&0.
\end{eqnarray*}}Hence $(A,\circ_{A})$ is a Leibniz algebra.
\end{proof}}

\delete{
\begin{defi}
Let $(A,\succ_{A},\prec_{A})$ be an admissible Novikov dialgebra and $(A,\circ_{A})$ be the sub-adjacent Leibniz algebra.
Suppose that $V$ is a vector space and $l_{\succ_{A}},r_{\succ_{A}},l_{\prec_{A}},r_{\prec_{A}}:A\rightarrow\mathrm{End}_{\mathbb
K}(V)$ are linear maps.
\delete{
\eqref{eq:sum linear} holds.
Set
\begin{equation}\label{eq:sum linear}
l_{\circ_{A}}=l_{\succ_{A}}+l_{\prec_{A}},\;\;
r_{\circ_{A}}=r_{\succ_{A}}+r_{\prec_{A}}.
\end{equation}}
If \eqref{eq:anLei rep3}-\eqref{eq:anLei rep7}, \eqref{eq:anLei rep9}-\eqref{eq:anLei rep11} and the following equations hold:
\begin{eqnarray}
&&-r_{\succ_{A}}(x)l_{\prec_{A}}(y)v=r_{\succ_{A}}(x)r_{\succ_{A}}(y)v,\label{eq:adm Nov diarep1}\\
&&2l_{\prec_{A}}(x\prec_{A}y)v-2l_{\prec_{A}}(y\prec_{A}x)v=
l_{\prec_{A}}(x)l_{\prec_{A}}(y)v-l_{\prec_{A}}(y)l_{\prec_{A}}(x)v
,\label{eq:adm Nov diarep2}\\
&&2r_{\succ_{A}}(x\circ_{A}y)v=r_{\succ_{A}}(y)r_{\succ_{A}}(x)v
+l_{\prec_{A}}(x)r_{\succ_{A}}(y)v,\label{eq:adm Nov diarep3}\\
&&-r_{\succ_{A}}(x)r_{\prec_{A}}(y)v=r_{\succ_{A}}(x)l_{\succ_{A}}(y)v
,\label{eq:adm Nov diarep4}\\
&&2l_{\succ_{A}}(x)r_{\circ_{A}}(y)v=r_{\succ_{A}}(y)l_{\succ_{A}}(x)v
+r_{\prec_{A}}(x\succ_{A}y)v,\label{eq:adm Nov diarep5}\\
&&-l_{\succ_{A}}(y\prec_{A}x)v=l_{\succ_{A}}(x\succ_{A}y)v,\label{eq:adm Nov diarep6}\\
&&2r_{\prec_{A}}(x)r_{\prec_{A}}(y)v-2r_{\prec_{A}}(x)l_{\prec_{A}}(y)v
=r_{\prec_{A}}(y\prec_{A}x)v-l_{\prec_{A}}(y)r_{\prec_{A}}(x)v,\label{eq:adm Nov diarep7}\\
&&2l_{\succ_{A}}(x)l_{\circ_{A}}(y)v=
l_{\succ_{A}}(x\succ_{A}y)v+l_{\prec_{A}}(y)l_{\succ_{A}}(x)v
,\;\forall x,y\in A, v\in V,\label{eq:adm Nov diarep8}
\end{eqnarray}
then we say $(l_{\succ_{A}},r_{\succ_{A}},l_{\prec_{A}},r_{\prec_{A}},V)$ is a {\bf representation} of $(A,\succ_{A},\prec_{A})$.}
\delete{
Two representations $(l_{\succ_{A}},r_{\succ_{A}},$
$l_{\prec_{A}},r_{\prec_{A}},V)$ and $(l'_{\succ_{A}},r'_{\succ_{A}},l'_{\prec_{A}},r'_{\prec_{A}},V')$ of $(A,\succ_{A},\prec_{A})$ are called
\textbf{equivalent} if there exists a linear isomorphism $\phi:V \rightarrow V'$ such that the following equations hold:
\begin{eqnarray*}
\phi l_{\succ_{A}}(x)=l'_{\succ_{A}}(x)\phi,\;\phi r_{\succ_{A}}(x)=r'_{\succ_{A}}(x)\phi,\;\phi l_{\prec_{A}} (x)=l'_{\prec_{A}}(x)\phi,\;\phi r_{\prec_{A}} (x)=r'_{\prec_{A}}(x)\phi,\;\forall x\in A.
\end{eqnarray*}
\end{defi}}

\delete{
\begin{ex}
Let $(A,\succ_{A},\prec_{A})$ be an admissible Novikov dialgebra. Then
$(\mathcal{L}_{\succ_{A}},\mathcal{R}_{\succ_{A}},
\mathcal{L}_{\prec_{A}},\mathcal{R}_{\prec_{A}},A)$
is a representation of $(A,\succ_{A},\prec_{A})$, which is called
the \textbf{adjoint representation}.
\end{ex}}
\delete{
\begin{pro}\label{pro:adNov dual rep}
Let $(A,\succ_{A},\prec_{A})$ be an admissible Novikov dialgebra and $(A,\circ_{A})$ be the sub-adjacent Leibniz algebra. If
$(l_{\succ_{A}},r_{\succ_{A}},l_{\prec_{A}},r_{\prec_{A}},V)$ is a representation of $(A,\succ_{A},\prec_{A})$, then
$$(-l^{*}_{\circ_{A}},-l^{*}_{\prec_{A}}-r^{*}_{\succ_{A}},
l^{*}_{\prec_{A}},l^{*}_{\circ_{A}}+r^{*}_{\circ_{A}},V^{*})$$
is also a representation of $(A,\succ_{A},\prec_{A})$.
In particular, $(-\mathcal{L}^{*}_{\circ_{A}},-\mathcal{L}^{*}_{\prec_{A}}
-\mathcal{R}^{*}_{\succ_{A}},\mathcal{L}^{*}_{\prec_{A}},
\mathcal{L}^{*}_{\circ_{A}}+\mathcal{R}^{*}_{\circ_{A}},
A^{*})$ is a representation of $(A,\succ_{A},\prec_{A})$.
\end{pro}
\begin{proof}
The proof is similar to that of Proposition \ref{pro:anLei dual rep}.
\end{proof}

\begin{defi}
Let $(A,\triangleright_{A},\triangleleft_{A})$ be a Novikov dialgebra and $l_{\triangleright_{A}},r_{\triangleright_{A}},l_{\triangleleft_{A}},r_{\triangleleft_{A}}
:A\rightarrow\mathrm{End}_{\mathbb
K}(V)$ be linear maps.}
\delete{
and $(A,\circ_{A})$ be the sub-adjacent Leibniz algebra.
Let $V$ be a vector space and
Set
\begin{equation}\label{eq:sum linear 2}
l_{\circ_{A}}=l_{\triangleright_{A}}+l_{\triangleleft_{A}},\;\;
r_{\circ_{A}}=r_{\triangleright_{A}}+r_{\triangleleft_{A}}.
\end{equation}
$(l_{\circ_{A}},r_{\circ_{A}},V)$ is a representation of $(A,\circ_{A})$}
\delete{
If the following equations hold:
\begin{eqnarray}
&&r_{\triangleright_{A}}(x\triangleright_{A}y)v
=r_{\triangleright_{A}}(y)r_{\triangleright_{A}}(x)v
+r_{\triangleright_{A}}(y)r_{\triangleleft_{A}}(x)v
+l_{\triangleright_{A}}(x)r_{\triangleright_{A}}(y)v,\label{eq:Nov dialg rep1}\\
&&r_{\triangleleft_{A}}(x\triangleright_{A}y)v
=l_{\triangleright_{A}}(x)r_{\triangleleft_{A}}(y)v
-r_{\triangleleft_{A}}(y)l_{\triangleright_{A}}(x)v
-r_{\triangleleft_{A}}(x\triangleleft_{A}y)v,\label{eq:Nov dialg rep2}\\
&&r_{\triangleleft_{A}}(x)r_{\triangleright_{A}}(y)v
=-r_{\triangleleft_{A}}(x)l_{\triangleleft_{A}}(y)v,\label{eq:Nov dialg rep3}\\
&&r_{\triangleleft_{A}}(x\triangleleft_{A}y)v
=l_{\triangleleft_{A}}(x)r_{\triangleleft_{A}}(y)v,\label{eq:Nov dialg rep4}\\
&&r_{\triangleright_{A}}(x)r_{\triangleleft_{A}}(y)v
=r_{\triangleleft_{A}}(y\triangleright_{A}x)v
=-r_{\triangleright_{A}}(x)l_{\triangleright_{A}}(y)v,\label{eq:Nov dialg rep5}\\
&&l_{\triangleright_{A}}(x)r_{\triangleright_{A}}(y)v
=r_{\triangleright_{A}}(y)l_{\triangleright_{A}}(x)v
+r_{\triangleright_{A}}(y)l_{\triangleleft_{A}}(x)v
+r_{\triangleright_{A}}(x\triangleright_{A}y)v,\label{eq:Nov dialg rep6}\\
&&l_{\triangleleft_{A}}(x)r_{\triangleright_{A}}(y)v
=r_{\triangleright_{A}}(x\triangleleft_{A}y)v
-r_{\triangleleft_{A}}(y)r_{\triangleright_{A}}(x)v
-l_{\triangleleft_{A}}(x)r_{\triangleleft_{A}}(y)v
,\label{eq:Nov dialg rep7}\\
&&r_{\triangleleft_{A}}(x)l_{\triangleright_{A}}(y)v
=-r_{\triangleleft_{A}}(x)r_{\triangleleft_{A}}(y)v
,\label{eq:Nov dialg rep8}\\
&&r_{\triangleright_{A}}(x)l_{\triangleleft_{A}}(y)v
=l_{\triangleleft_{A}}(y)r_{\triangleright_{A}}(x)v
=-r_{\triangleright_{A}}(x)r_{\triangleright_{A}}(y)v
,\label{eq:Nov dialg rep9}\\
&&l_{\triangleright_{A}}(x)l_{\triangleright_{A}}(y)v
=l_{\triangleright_{A}}(x\triangleright_{A}y)v
+l_{\triangleright_{A}}(x\triangleleft_{A}y)v
+l_{\triangleright_{A}}(y)l_{\triangleright_{A}}(x)v,\label{eq:Nov dialg rep10}\\
&&l_{\triangleleft_{A}}(x)l_{\triangleright_{A}}(y)v
=l_{\triangleright_{A}}(y)l_{\triangleleft_{A}}(x)v
-l_{\triangleleft_{A}}(y\triangleright_{A}x)v
-l_{\triangleleft_{A}}(x)l_{\triangleleft_{A}}(y)v,\label{eq:Nov dialg rep11}\\
&&l_{\triangleleft_{A}}(x\triangleright_{A}y)v
=-l_{\triangleleft_{A}}(y\triangleleft_{A}x)v,\label{eq:Nov dialg rep12}\\
&&l_{\triangleleft_{A}}(x)l_{\triangleleft_{A}}(y)v
=l_{\triangleleft_{A}}(y)l_{\triangleleft_{A}}(x)v,\label{eq:Nov dialg rep13}\\
&&l_{\triangleright_{A}}(x\triangleleft_{A}y)v
=l_{\triangleleft_{A}}(x)l_{\triangleright_{A}}(y)v
=-l_{\triangleright_{A}}(y\triangleright_{A}x)v
,\;\forall x,y\in A, v\in V,
\end{eqnarray}
then we say $(l_{\triangleright_{A}},r_{\triangleright_{A}},l_{\triangleleft_{A}},
r_{\triangleleft_{A}},V)$ is a {\bf representation} of $(A,\triangleright_{A},\triangleleft_{A})$.}
\delete{
Two representations $(l _{\triangleright_{A}},r _{\triangleright_{A}},l _{\triangleleft_{A}},$
$r_{\triangleleft_{A}},V)$ and $(l' _{\triangleright_{A}},r' _{\triangleright_{A}},l' _{\triangleleft_{A}},r'_{\triangleleft_{A}},V')$ of $(A,\triangleright_{A},\triangleleft_{A})$ are called
\textbf{equivalent} if there exists a linear isomorphism $\phi:V \rightarrow V'$ such that the following equations hold:
\begin{eqnarray*}
\phi l_{\triangleright_{A}}(x)=l'_{\triangleright_{A}}(x)\phi,\;\phi r_{\triangleright_{A}}(x)=r'_{\triangleright_{A}}(x)\phi,\;\phi l_{\triangleleft_{A}}(x)=l'_{\triangleleft_{A}}(x)\phi,\;\phi
r_{\triangleleft_{A}}(x)=r'_{\triangleleft_{A}}(x)\phi,\;
\forall x\in A.
\end{eqnarray*}
\end{defi}}

\delete{
In fact, for a vector space $V$ and linear maps
$l_{\triangleright_{A}},r_{\triangleright_{A}},l_{\triangleleft_{A}},r_{\triangleleft_{A}}:A\rightarrow \mathrm{End}(V)$, the quadruple
$(l_{\triangleright_{A}},r_{\triangleright_{A}},l_{\triangleleft_{A}},r_{\triangleleft_{A}},V)$ is a representation of the Novikov dialgebra $(A,\triangleright_{A},\triangleleft_{A})$ if and only
if there is a \sdpp structure on $A\oplus V$ given by
\begin{eqnarray}
&&(x+u)\triangleright_{d} (y+v)=x\triangleright_{A}y+l_{\triangleright_{A}}(x)v+r_{\triangleright_{A}}(y)u,\label{eq:sd SDPP1}\\
&&(x+u)\triangleleft_{d} (y+v)=x\triangleleft_{A}y+l_{\triangleleft_{A}}(x)v+r_{\triangleleft_{A}}(y)u,\;\forall x,y\in A,u,v\in V.
\label{eq:sd SDPP2}
\end{eqnarray}
We denote the \sdpp structure on $A\oplus V$ by $A\ltimes
_{l_{\triangleright_{A}},r_{\triangleright_{A}},
l_{\triangleleft_{A}},r_{\triangleleft_{A}}}V$.

\begin{ex}
Let $(A,\triangleright_{A},\triangleleft_{A})$ be a Novikov dialgebra. Then
$(\mathcal{L}_{\triangleright_{A}},\mathcal{R}_{\triangleright_{A}},
\mathcal{L}_{\triangleleft_{A}},\mathcal{R}_{\triangleleft_{A}},A)$
is a representation of $(A,\triangleright_{A},\triangleleft_{A} )$, which is called the \textbf{adjoint representation}.
\end{ex}}
\delete{
\begin{pro}
Let $(A,\triangleright_{A},\triangleleft_{A})$ be a Novikov dialgebra. If
$(l_{\triangleright_{A}},r_{\triangleright_{A}},l_{\triangleleft_{A}},r_{\triangleleft_{A}},V)$ is a
representation of $(A,\triangleright_{A},\triangleleft_{A})$, then
$$(l^{*}_{\triangleright_{A}}-l^{*}_{\triangleleft_{A}},
l^{*}_{\triangleleft_{A}}+r^{*}_{\triangleright_{A}}
,-l^{*}_{\triangleleft_{A}},r^{*}_{\triangleright_{A}}
+l^{*}_{\triangleleft_{A}}-l^{*}_{\triangleright_{A}}
-r^{*}_{\triangleleft_{A}},V^{*})$$
is also a representation of $(A,\triangleright_{A},\triangleleft_{A})$.
In particular, $(\mathcal{L}^{*}_{\triangleright_{A}}-\mathcal{L}^{*}_{\triangleleft_{A}},
\mathcal{L}^{*}_{\triangleleft_{A}}+\mathcal{R}^{*}_{\triangleright_{A}},
-\mathcal{L}^{*}_{\triangleleft_{A}},\mathcal{R}^{*}_{\triangleright_{A}}
+\mathcal{L}^{*}_{\triangleleft_{A}}-\mathcal{L}^{*}_{\triangleright_{A}}
-\mathcal{R}^{*}_{\triangleleft_{A}},A^{*})$ is a representation of $(A,\triangleright_{A},\triangleleft_{A})$, which is called the {\bf coadjoint representation} of $(A,\triangleright_{A},\triangleleft_{A})$.
\end{pro}
\begin{proof}
By the assumptions, $A\ltimes
_{l_{\triangleright_{A}},r_{\triangleright_{A}},l_{\triangleleft_{A}},r_{\triangleleft_{A}}}V$  is a Novikov dialgebra. By Proposition \ref{pro:-2-alg}, the $-2$-algebra of $A\ltimes
_{l_{\triangleright_{A}},r_{\triangleright_{A}},l_{\triangleleft_{A}},
r_{\triangleleft_{A}}}V$
is an admissible Novikov dialgebra
$A\ltimes
_{l_{\succ_{A}},r_{\succ_{A}},
l_{\prec_{A}},r_{\prec_{A}}}V$
for $l_{\succ_{A}}=l_{\triangleright_{A}}-2l_{\triangleleft_{A}},
r_{\succ_{A}}=r_{\triangleright_{A}}-2r_{\triangleleft_{A}},
l_{\prec_{A}}=l_{\triangleleft_{A}}-2l_{\triangleright_{A}},
r_{\prec_{A}}=r_{\triangleleft_{A}}-2r_{\triangleright_{A}}$.
Then $(l_{\succ_{A}},r_{\succ_{A}},
l_{\prec_{A}},r_{\prec_{A}},V)$ is a representation of the admissible Novikov dialgebra $(A,\succ_{A}=\triangleright_{A}-2\triangleleft_{A},
\prec_{A}=\triangleleft_{A}-2\triangleright_{A})$. By Proposition \ref{pro:adNov dual rep}, $(-l^{*}_{\circ_{A}}=l^{*}_{\triangleright_{A}}+l^{*}_{\triangleleft_{A}},
-l^{*}_{\prec_{A}}-r^{*}_{\succ_{A}}=-l^{*}_{\triangleleft_{A}}+2l^{*}_{\triangleright_{A}}
-r^{*}_{\triangleright_{A}}+2r^{*}_{\triangleleft_{A}},
l^{*}_{\prec_{A}}=l^{*}_{\triangleleft_{A}}-2l^{*}_{\triangleright_{A}},
l^{*}_{\circ_{A}}+r^{*}_{\circ_{A}}=-l^{*}_{\triangleleft_{A}}
-l^{*}_{\triangleright_{A}}-r^{*}_{\triangleright_{A}}-r^{*}_{\triangleleft_{A}},V^{*})$
is also a representation of $(A,\succ_{A},\prec_{A})$. Again by
Proposition \ref{pro:2-alg}, $2$-algebra of the admissible Novikov dialgebra $A\ltimes
_{-l^{*}_{\circ_{A}},-l^{*}_{\prec_{A}}-r^{*}_{\succ_{A}},
l^{*}_{\prec_{A}},l^{*}_{\circ_{A}}+r^{*}_{\circ_{A}}}V^*$
is a Novikov dialgebra
$A\ltimes
_{l^{*}_{\triangleright_{A}}-l^{*}_{\triangleleft_{A}},
l^{*}_{\triangleleft_{A}}+r^{*}_{\triangleright_{A}}
,-l^{*}_{\triangleleft_{A}},r^{*}_{\triangleright_{A}}
+l^{*}_{\triangleleft_{A}}-l^{*}_{\triangleright_{A}}
-r^{*}_{\triangleleft_{A}}}V^*$. Thus $(l^{*}_{\triangleright_{A}}-l^{*}_{\triangleleft_{A}},
l^{*}_{\triangleleft_{A}}+r^{*}_{\triangleright_{A}}
,-l^{*}_{\triangleleft_{A}},r^{*}_{\triangleright_{A}}
+l^{*}_{\triangleleft_{A}}-l^{*}_{\triangleright_{A}}
-r^{*}_{\triangleleft_{A}},V^{*})$
is a representation of $(A,\triangleright_{A},\triangleleft_{A})$.
\end{proof}

\begin{defi}
A \textbf{quadratic Novikov dialgebra} $(A,\triangleright_{A},\triangleleft_{A},\omega)$ is a Novikov dialgebra $(A,\triangleright_{A},\triangleleft_{A})$ equipped with a nondegenerate skew-symmetric bilinear form $\omega$ satisfying the following \textbf{invariant} condition:
\begin{eqnarray}
\omega(x\triangleright_{A}y,z)&=&
-\omega(x,z\triangleright_{A}y+y\triangleleft_{A}z),\label{eq:LNbf1}\\
\omega(x\triangleleft_{A}y,z)&=&
\omega(x,y\triangleright_{A}z-z\triangleright_{A}y+
z\triangleleft_{A}y-y\triangleleft_{A}z),\;\forall x,y,z\in A.\label{eq:LNbf2}
\end{eqnarray}
\end{defi}

\begin{pro}
Let $(A,\triangleright_{A},\triangleleft_{A},\omega)$ be a quadratic Novikov dialgebra. Then $(\mathcal{L}_{\triangleright_{A}},\mathcal{R}_{\triangleright_{A}},
\mathcal{L}_{\triangleleft_{A}},\mathcal{R}_{\triangleleft_{A}},A)$
and
$(\mathcal{L}^{*}_{\triangleright_{A}}-\mathcal{L}^{*}_{\triangleleft_{A}},
\mathcal{L}^{*}_{\triangleleft_{A}}+\mathcal{R}^{*}_{\triangleright_{A}},
-\mathcal{L}^{*}_{\triangleleft_{A}},\mathcal{R}^{*}_{\triangleright_{A}}
+\mathcal{L}^{*}_{\triangleleft_{A}}-\mathcal{L}^{*}_{\triangleright_{A}}
-\mathcal{R}^{*}_{\triangleleft_{A}},A^{*})$
are equivalent as representations of $(A,\triangleright_{A},\triangleleft_{A})$.
\end{pro}
\begin{proof}
Let $x,y,z\in A$. Then we have
\begin{eqnarray}
\omega(x\triangleright_{A}y,z)\overset{\eqref{eq:LNbf1},\eqref{eq:LNbf2}}{=}
-\omega(x\triangleleft_{A}z,y)
-\omega(x,y\triangleright_{A}z+z\triangleleft_{A}y)
\overset{\eqref{eq:LNbf1}}{=}
\omega(y,x\triangleleft_{A}z)-\omega(y,x\triangleright_{A}z).\label{eq:cor3.27}
\end{eqnarray}
Observing the RHS of \eqref{eq:LNbf2} is skew-symmetric in $y$ and $z$, we have
\begin{equation}\label{eq:cor3.37}
\omega(x\triangleleft_{A}y,z)=-\omega(x\triangleleft_{A}z,y)
=\omega(y,x\triangleleft_{A}z).
\end{equation}
Thus by \eqref{eq:cor3.27}, \eqref{eq:LNbf1}, \eqref{eq:cor3.37} and \eqref{eq:LNbf2}, we respectively have
\begin{eqnarray*}
&&\omega^{\natural}\big(\mathcal{L}_{\triangleright_{A}}(x)y\big)
=(\mathcal{L}^{*}_{\triangleright_{A}}-\mathcal{L}^{*}_{\triangleleft_{A}})(x)\omega^{\natural}(y),\;
\omega^{\natural}\big(\mathcal{R}_{\triangleright_{A}}(y)x\big)
=(\mathcal{L}^{*}_{\triangleleft_{A}}+\mathcal{R}^{*}_{\triangleright_{A}})(y)\omega^{\natural}(x),\\
&&\omega^{\natural}\big(\mathcal{L}_{\triangleleft_{A}}(x)y\big)
=-\mathcal{L}^{*}_{\triangleleft_{A}}(x)\omega^{\natural}(y),\;
\omega^{\natural}\big(\mathcal{R}_{\triangleleft_{A}}(y)x\big)
=(\mathcal{R}^{*}_{\triangleright_{A}}
+\mathcal{L}^{*}_{\triangleleft_{A}}-\mathcal{L}^{*}_{\triangleright_{A}}
-\mathcal{R}^{*}_{\triangleleft_{A}})(y)\omega^{\natural}(x),\;\forall x,y\in A.
\end{eqnarray*}
Hence, the bijection $\omega^{\natural}:A\rightarrow A^*$ gives the equivalence between $(\mathcal{L}_{\triangleright_{A}},\mathcal{R}_{\triangleright_{A}},
\mathcal{L}_{\triangleleft_{A}},\mathcal{R}_{\triangleleft_{A}},A)$
and
$(\mathcal{L}^{*}_{\triangleright_{A}}-\mathcal{L}^{*}_{\triangleleft_{A}},
\mathcal{L}^{*}_{\triangleleft_{A}}+\mathcal{R}^{*}_{\triangleright_{A}},
-\mathcal{L}^{*}_{\triangleleft_{A}},\mathcal{R}^{*}_{\triangleright_{A}}
+\mathcal{L}^{*}_{\triangleleft_{A}}-\mathcal{L}^{*}_{\triangleright_{A}}
-\mathcal{R}^{*}_{\triangleleft_{A}},A^{*})$
as representations of $(A,\triangleright_{A},\triangleleft_{A})$.
\end{proof}

\begin{pro}
Let $(A,\triangleright_{A},\triangleleft_{A},\omega)$ be a quadratic Novikov dialgebra.
Suppose that $Q$ is a derivation on $(A,\triangleright_{A},\triangleleft_{A})$ such that
\begin{equation}\label{eq:adjoint map}
\omega\big(Q(x),y\big)=-\omega\big(x,Q(y)\big),\;\forall x,y\in A.
\end{equation}
Then $(A,\circ_{A},\omega)$ with a multiplication $\circ_{A}:A\otimes A\rightarrow A$ given by \eqref{eq:derivation LN}
is a quadratic Leibniz algebra.
\end{pro}

\begin{proof}
By proposition \ref{pro:LN and Leib}, $(A,\circ_{A})$ is a Leibniz algebra.
Let $x,y,z\in A$, then we have
\begin{eqnarray*}
\omega(x,y\circ_{A}z)&\overset{\eqref{eq:derivation LN}}{=}&\omega\big(x,Q(y)\triangleright_{A}z+y\triangleleft_{A}Q(z)\big)\\
&\overset{\eqref{eq:LNbf1},\eqref{eq:LNbf2}}{=}&\omega\big(Q(y),x\triangleright_{A}z\big)+\omega\big(Q(y),z\triangleleft_{A}x\big)
-\omega\big(x\triangleright_{A}Q(z),y\big)+\omega\big(x\triangleleft_{A}Q(z),y\big)\\
&&
+\omega\big(Q(z)\triangleright_{A}x,y\big)-\omega\big(Q(z)\triangleleft_{A}x,y\big)\\
&\overset{\eqref{eq:adjoint map}}{=}&\omega\big(Q(x\triangleright_{A}z),y\big)+\omega\big(Q(z\triangleleft_{A}x),y\big)
-\omega\big(x\triangleright_{A}Q(z),y\big)+\omega\big(x\triangleleft_{A}Q(z),y\big)\\
&&
+\omega\big(Q(z)\triangleright_{A}x,y\big)-\omega\big(Q(z)\triangleleft_{A}x,y\big)\\
&\overset{\eqref{eq:Ao}}{=}&\omega\big(Q(x)\triangleright_{A}z,y\big)+\omega\big(z\triangleleft_{A}Q(x),y\big)
+\omega\big(x\triangleleft_{A}Q(z),y\big)+\omega\big(Q(z)\triangleright_{A}x,y\big)\\
&=&\omega(x\circ_{A}z+z\circ_{A}x,y).
\end{eqnarray*}
Hence the conclusion follows.
\end{proof}

\begin{pro}
Let $(A,\triangleright_{A},\triangleleft_{A},\omega)$ be a quadratic Novikov dialgebra and the sub-adjacent Leibniz algebra be $(A,\circ_{A})$. Then $\omega$ is a nondegenerate skew-symmetric 2-cocyle on $(A,\circ_{A})$.
\end{pro}

\begin{proof}
Let $x,y,z\in A$, then we have
\begin{eqnarray*}
&&\omega(x,y\circ_{A}z)+\omega(z,y\circ_{A}x)-\omega(y,z\circ_{A}x+x\circ_{A}z)\\
&=&\omega(x,y\triangleright_{A}z)+\omega(x,y\triangleleft_{A}z)+\omega(z,y\triangleright_{A}x)+\omega(z,y\triangleleft_{A}x)
-\omega(y,z\triangleright_{A}x)-\omega(y,z\triangleleft_{A}x)\\
&&
-\omega(y,x\triangleright_{A}z)-\omega(y,x\triangleleft_{A}z)\\
&\overset{\eqref{eq:LNbf2}}{=}&\omega(x,y\triangleright_{A}z)+\omega(z,y\triangleright_{A}x)-\omega(y,z\triangleright_{A}x)
-\omega(y,z\triangleleft_{A}x)-\omega(y,x\triangleright_{A}z)-\omega(y,x\triangleleft_{A}z)
\overset{\eqref{eq:LNbf1}}{=}0.
\end{eqnarray*}
Hence the conclusion follows.
\end{proof}

\begin{pro}\label{pro:bf}
Let $A$ be a vector space with multiplications $\triangleright_{A},\triangleleft_{A}:A\otimes A\rightarrow A$
and $\omega$ be a nondegenerate skew-symmetric bilinear form on $A$. Suppose that $\widehat{A}=A\otimes \mathbb {K}[t,t^{-1}]$ with a multiplication $\circ_{\widehat{A}}:\widehat{A}\otimes \widehat{A}\rightarrow \widehat{A}$ given by \eqref{eq:tensor product}. Let $\widetilde{\omega}$ be a nondegenerate skew-symmetric bilinear form on $\widehat{A}$ satisfying
\begin{equation}
\widetilde{\omega}(x\otimes t^i,y\otimes t^j)=\omega(x,y)\mathcal{B}(t^i,t^j),\;\forall x,y\in A,i,j\in \mathbb{Z},
\end{equation}
where $\mathcal{B}$ is a nondegenerate symmetric bilinear form on $\mathbb {K}[t,t^{-1}]$ given by
\begin{equation}
\mathcal{B}(t^i,t^j)=\delta_{i+j,-1},\;\forall i,j\in \mathbb{Z}.
\end{equation}
Then $(\widehat{A},\circ_{\widehat{A}},\widetilde{\omega})$ is a quadratic Leibniz algebra if and only if $(A,\triangleright_{A},\triangleleft_{A},\omega)$ is a quadratic Novikov dialgebra.
\end{pro}

\begin{proof}
Let $x,y,z\in A$. Then we have
\begin{eqnarray*}
\widetilde{\omega}\big(x\otimes t^i,(y\otimes t^j)\circ_{\widehat{A}}(z\otimes t^k)\big)&=&\widetilde{\omega}\big(x\otimes t^i,(jy\triangleright_{A} z+ky\triangleleft_{A}z)\otimes t^{j+k-1}\big)\\
&=&j\omega(x,y\triangleright_{A} z)\delta_{i+j+k-1,-1}+k\omega(x,y\triangleleft_{A}z)\delta_{i+j+k-1,-1}\\
&=&j\omega(x,y\triangleright_{A} z)\delta_{i,-j-k}+k\omega(x,y\triangleleft_{A}z)\delta_{i,-j-k},\\
\widetilde{\omega}\big((x\otimes t^i)\circ_{\widehat{A}}(z\otimes t^k),y\otimes t^j\big)&=&\widetilde{\omega}\big((ix\triangleright_{A} z+kx\triangleleft_{A}z)\otimes t^{i+k-1},y\otimes t^{j}\big)\\
&=&i\omega(x\triangleright_{A} z,y)\delta_{i,-j-k}+k\omega(x\triangleleft_{A}z,y)\delta_{i,-j-k},\\
\widetilde{\omega}\big((z\otimes t^k)\circ_{\widehat{A}}(x\otimes t^i),y\otimes t^j\big)&=&
k\omega(z\triangleright_{A}x,y)\delta_{i,-j-k}
+i\omega(z\triangleleft_{A}x,y)\delta_{i,-j-k}.
\end{eqnarray*}
Suppose that $(\widehat{A},\circ_{\widehat{A}},\widetilde{\omega})$ is a quadratic Leibniz algebra. Then we obtain
\begin{eqnarray}
&&j\omega(x,y\triangleright_{A} z)\delta_{i,-j-k}+k\omega(x,y\triangleleft_{A}z)\delta_{i,-j-k}\nonumber\\
&&=i\omega(x\triangleright_{A} z,y)\delta_{i,-j-k}+k\omega(x\triangleleft_{A}z,y)\delta_{i,-j-k}
+k\omega(z\triangleright_{A}x,y)\delta_{i,-j-k}+i\omega(z\triangleleft_{A}x,y)\delta_{i,-j-k}.\ \ \label{eq:bf}
\end{eqnarray}
Taking $i=1,j=-1,k=0$ and $i=-1,j=0,k=1$ in \eqref{eq:bf}, then we have \eqref{eq:LNbf1} and \eqref{eq:LNbf2} respectively.
Conversely, let $(A,\triangleright_{A},\triangleleft_{A},\omega)$ be a quadratic Novikov dialgebra,
then we have
\begin{eqnarray*}
&&\widetilde{\omega}\big(x\otimes t^i,(y\otimes t^j)\circ_{\widehat{A}}(z\otimes t^k)\big)\\
&=&j\omega(x,y\triangleright_{A} z)\delta_{i,-j-k}+k\omega(x,y\triangleleft_{A}z)\delta_{i,-j-k}\\
&=&-j\omega(x\triangleright_{A} z,y)\delta_{i,-j-k}-j\omega(z\triangleleft_{A}x,y)\delta_{i,-j-k}
-k\omega(x\triangleright_{A}z,y)\delta_{i,-j-k}\\
&&
+k\omega(x\triangleleft_{A}z,y)\delta_{i,-j-k}
+k\omega(z\triangleright_{A}x,y)\delta_{i,-j-k}
-k\omega(z\triangleleft_{A}x,y)\delta_{i,-j-k}\\
&=&i\omega(x\triangleright_{A}z,y)\delta_{i,-j-k}
+k\omega(x\triangleleft_{A}z,y)\delta_{i,-j-k}
+k\omega(z\triangleright_{A}x,y)\delta_{i,-j-k}
+i\omega(z\triangleleft_{A}x,y)\delta_{i,-j-k}\\
&=&\widetilde{\omega}\big((x\otimes t^i)\circ_{\widehat{A}}(z\otimes t^k),y\otimes t^j\big)+\widetilde{\omega}\big((z\otimes t^k)\circ_{\widehat{A}}(x\otimes t^i),y\otimes t^j\big).
\end{eqnarray*}
Hence $\widetilde{\omega}$ is invariant on $(\widehat{A},\circ_{\widehat{A}})$ if and only if $\omega$ is invariant on $(A,\triangleright_{A},\triangleleft_{A})$.
Joint with Proposition \ref{pro:tensor product}, the conclusion follows.
\end{proof}}

\delete{
and Proposition \ref{pro:bf}, we get the following conclusion.
\begin{cor}
$(\widehat{A},\circ_{\widehat{A}},\widetilde{\omega})$ is a quadratic Leibniz algebra if and only if $(A,\triangleright_{A},\triangleleft_{A},\omega)$ is a quadratic Novikov dialgebra.
\end{cor}}

\bigskip

\noindent{\bf Acknowledgements.}
This work is supported by NSFC
(12401031, W2412041),
the Postdoctoral Fellowship Program of
CPSF (GZC20240755, 2024T005TJ, 2024M761507) and Nankai Zhide Foundation.

\noindent{\bf Declaration of interests.} 
The authors have no conflicts of interest to disclose.

\noindent{\bf Data availability.} 
Data sharing is not applicable to this article as no new data were created or analyzed.


\begin{thebibliography}{99}






\bibitem{Akman} F. Akman,  On some generalizations of Batalin-Vilkovisky algebras, \textit{J. Pure Appl. Algebra} 120 (1997) 105-141.


\bibitem{ammardefiLeibnizalgebra} M. Ammar and N. Poncin, Coalgebraic approach to the Loday infinity category, stem differential for $2n$-ary graded and homotopy algebras. \textit{Ann. Inst. Fourier (Grenoble)} 60 (2010) 355-387.



















\bibitem{BLST} C. Bai, G. Liu, Y. Sheng and R. Tang, Quasi-triangular, factorizable Leibniz bialgebras and relative Rota-Baxter operators, {\it Forum Math.} 
37 (2025) 1083-1101.

\bibitem{Bal} A. A. Balinskii and S. P. Novikov, Poisson brackets of hydrodynamic type, Frobenius algebras and Lie
algebras, \textit{Soviet Math. Dokl.} 32 (1985) 228-231.





\bibitem{Blo1} A. Bloh, On a generalization of the concept of Lie algebra, \textit{Dokl. Akad. Naku SSSR} 165 (1965)  471-473.


\bibitem{Bor} M. Bordemann and F. Wagemann, Global integration of Leibniz algebras, \textit{J. Lie Theory} 27 (2017) 
    555-567.


\bibitem{Chap2001} F. Chapoton, Un endofoncteur de la cat\'egorie des op\'erades, in: Dialgebras and related operads. \textit{Lect. Notes. Math.} 1763, Springer-Verlag, (2001) 105-110.

\bibitem{Chap2005} F. Chapoton, On some anticyclic operads, \textit{Algebr. Geom. Topol.} 5 (2005) 53-69.


\bibitem{Cov} S. Covez, The local integration of Leibniz algebras, \textit{Ann. Inst. Fourier (Grenoble)} 63 (2013)
1-35.









\bibitem{Dhe} B. Dherin and F. Wagemann, Deformation quantization of Leibniz algebras, \textit{Adv. Math.} 270 (2015)  21-48.






\bibitem{FMi} A. Fialowski and \'{E}. Zs. Mih\'{a}lka, Representations of Leibniz algebras, \textit{Algebr. Represent. Theory} 18 (2015) 477-490.


\bibitem{GLB} D. Gao, G. Liu and C. Bai, Anti-dendriform algebras, new splitting of operations and
Novikov type algebras, \textit{J. Algebr. Combin.} 59 (2024) 661-696.


\bibitem{Gao}Y. Gao, The second Leibniz homology group for Kac-Moody Lie algebras. \textit{Bull. London Math. Soc.} 32 (2000) 25-33.

\bibitem{Gel} I. M. Gel'fand and I. Ya. Dorfman, Hamiltonian operators and algebraic structures related to them, \textit{Funct. Anal. Appl.} 13 (1979) 248-262.







\bibitem{JP} M. Jibladze and T. Pirashvili, Lie theory for symmetric Leibniz algebras, \textit{J. Homotopy Relat. Struct.} 15 (2020) 167-183.





\bibitem{Kol} P. S. Kolesnikov and B. K. Sartayev, On the embedding of left-symmetric algebras into differential perm-algebras, \textit{Comm. Algebra} 50 (2022) 3246-3260.



\bibitem{Kot} A. Kotov and T. Strobl, The embedding tensor, Leibniz-Loday algebras, and their higher gauge theories, \textit{Comm. Math. Phys.} 376 (2020)  
    235-258.








\bibitem{LZB} Y. Lin, P. Zhou and  C. Bai, Infinite-dimensional Lie bialgebras via affinization of
perm bialgebras and pre-Lie bialgebras, {\it J. Algebra}. 663
(2025) 210-258.

\bibitem{LB2022} G. Liu and C. Bai, Anti-pre-Lie algebras, Novikov algebras and commutative $2$-cocycles on Lie algebras, \textit{J. Algebra} 609 (2022) 337-379.







\bibitem{Lod2} J.-L. Loday and T. Pirashvili, Universal enveloping algebras of Leibniz algebras and (co)homology, \textit{Math. Ann.} 296 (1993)  
    139-158.


\bibitem{Lodder} J. Lodder, Leibniz cohomology for differentiable manifolds. \textit{Ann. Inst. Fourier (Grenoble)} 48 (1998) 73-95.



\bibitem{MaSa} A. Makhlouf and R. Saha, On compatible Leibniz algebras, \textit{J. Algebra Appl.} 24 (2025) Paper No. 2550105, 25 pp.








\bibitem{Pei} J. Pei, C. Bai, L. Guo and X. Ni, Disuccessors and duplicators of operads, Manin products and operators, in ``Symmetries and Groups in Contemporary Physics", Nankai Series in Pure, Applied Mathematics and Theoretical Physics 11 (2013) 191-196.



\bibitem{Pir} T. Pirashvili, On Leibniz homology. \textit{Ann. Inst. Fourier (Grenoble)} 44 (1994) 401-411.










\bibitem{Str} T. Strobl and F. Wagemann, Enhanced Leibniz algebras: structure theorem and induced Lie 2-algebra, \textit{Comm. Math. Phys.} 376 (2020) 
    51-79.



\bibitem{ST} R. Tang and Y. Sheng, Leibniz bialgebras, relative Rota-Baxter operators and the classical Leibniz Yang-Baxter equation, \textit{J. Noncommut. Geom.} 16 (2022) 1179-1211.


\bibitem{TXS} R. Tang, N. Xu and Y. Sheng, Symplectic structures, product structures and complex structures on Leibniz algebras, \textit{J. Algebra} 647 (2024) 710-743.















































































\bibitem{XuBHo} Z. Xu, C. Bai and Y. Hong, Leibniz conformal bialgebras and the classical Leibniz conformal Yang-Baxter equation, arXiv:2503.18059.	

\bibitem{ZhHo} J. Zhou and Y. Hong, Quadratic Leibniz conformal algebras, \textit{J. Algebra Appl.} 18 (2019) 1950195, 22 pp.



\end{thebibliography}
\end{document}